\newtheorem{theorem}{Theorem}[section]
\newtheorem{conjecture}{Conjecture}[section]
\newtheorem{example}{Example}[section]
\newtheorem{lemma}{Lemma}[section]
\newtheorem{definition}{Definition}[section]
\newtheorem{corollary}{Corollary}[section]
\newtheorem{proposition}{Proposition}[section]
\newtheorem{remark}{Remark}[section]
\newcommand{\C}{\mathbb{C}}
\newcommand{\R}{\mathbb{R}}
\newcommand{\Q}{\mathbb{Q}}
\newcommand{\Z}{\mathbb{Z}}
\renewcommand{\O}{\mathcal{O}}
\renewcommand{\P}{\mathbb{P}}
\renewcommand{\H}{\mathcal{H}}
\newcommand{\Amp}{\mathrm{Amp}}
\newcommand{\Kah}{\mathrm{Kah}}
\newcommand{\A}{\mathcal{A}}
\newcommand{\vol}{\mathrm{Vol}}
\title{Lefschetz theorems, Hodge-Riemann relations and Ample vector bundles}
\author{Yiran Lin\footnote{QiuZhen College, Tsinghua University}}
\date{}
\begin{document}
\maketitle

\begin{abstract}
    We introduce a new Hermitian metric on the cohomology ring of compact K\"ahlerian manifolds with a pair $(v,w)$ satisfying certain Hodge-Riemann relations. An Hermitian metric on the exterior algebra of the cotangent bundle is also defined and we establish the corresponding theory of harmonic forms, relating the global metric and local metric. This generalizes the classical Hodge theory. As an immediate application we give a new proof of Dinh-Nguyen's theorem on the Hodge-Riemann relations for mixed K\"ahler classes. We give several other applications to the Lefschetz property and Hodge-Riemann relations of Chern classes of ample vector bundles.
\end{abstract}

\tableofcontents

\section{Introduction}
Let $X$ be a compact K\"ahlerian manifold of dimension $n$, $w\in H^{1,1}(X,\R)$ a K\"ahler class on $X$. Let $p,q,k$ be nonnegative integers satisfying $p+q+k=n$. The classical hard Lefschetz theorem states that
\[-\wedge w^k: H^{p,q}(X)\rightarrow H^{n-q,n-p}(X)\]
is an isomorphism. The classical Hodge-Riemann bilinear relation states that the Hermitian bilinear form on $H^{p,q}(X)$
\[\left<a,b\right>=(-1)^q\sqrt{-1}^{(p+q)^2}\int_Xa\bar{b}w^k\]
is positive definite on the subspace
\[P^{p,q}(X)=\ker\left(-\wedge w^{k+1}: H^{p,q}(X)\rightarrow H^{n-q+1,n-p+1}(X)\right).\]
Motivated by the theory of mixed volumes in convex geometry, attempts of generalizations of the classical hard Lefschetz theorem and Hodge Riemann relation to the case of mixed K\"ahler classes were made by Gromov \cite{G}, Timorin \cite{T}, etc. A satisfactory answer was given by T.-C. Dinh and V.-A. Nguyen \cite{DN2}. The main theorem of Dinh-Nguyen states that for K\"ahler classes $w_1,\cdots,w_k,w\in H^{1,1}(X,\R)$, we have
\begin{itemize}
    \item[(1)](Hard Lefschetz)
    \[-\wedge w_1\cdots w_k: H^{p,q}(X)\rightarrow H^{n-q,n-p}(X)\]
    is an isomorphism.
    \item[(2)](Hodge-Riemann relation) The Hermitian bilinear form on $H^{p,q}(X)$
    \[\left<a,b\right>=(-1)^q\sqrt{-1}^{(p+q)^2}\int_Xa\bar{b}w_1\cdots w_k\]
    is positive definite on the subspace
    \[P^{p,q}(X)=\ker\left(-\wedge w_1\cdots w_kw: H^{p,q}(X)\rightarrow H^{n-q+1,n-p+1}(X)\right).\]
\end{itemize}
Motivated by the work of S. Bloch and D. Gieseker on the positivity of the Chern classes of an ample vector bundle \cite{BG}, there were attempts of generalizations of the above theorems to Chern classes of ($\R$-twisted) ample vector bundles. See, for example, \cite{RT}\cite{LZ}.

The goal of this paper is to prove the hard Lefschetz theorem and Hodge Riemann relation for several $\R$-twisted ample vector bundles and for mixed K\"ahler classes together with an ample vector bundle. We introduce some new techniques and overcome several technical difficulties. In particular we obtain new (in some sense, more natural) proofs of the theorem of Dinh-Nguyen and the theorem of Timorin. First of all we have the following
\begin{theorem}\label{theorem 1}
    Let $X$ be a smooth projective variety of dimension $n$, $E_1,\cdots,E_k$ ample $\R$-twisted vector bundles of rank $e_1,\cdots,e_k$ on $X$. Let $p,q$ be nonnegative integers satisfying $p+q+e_1+\cdots+e_k=n$. Then
        \[-\wedge c_{e_1}(E_1)\cdots c_{e_k}(E_k):H^{p,q}(X)\rightarrow H^{n-q,n-p}(X)\]
        is an isomorphism.
\end{theorem}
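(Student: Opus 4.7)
I would deduce Theorem~\ref{theorem 1} from a stronger Hodge-Riemann statement for the pair $(v,w)$ where $v:=c_{e_1}(E_1)\cdots c_{e_k}(E_k)$ and $w$ is an auxiliary K\"ahler class on $X$. In the Hermitian-cohomology framework of the paper, HR for this pair implies the Hard Lefschetz isomorphism of the theorem as a formal consequence: the positive-definiteness of the associated pairing on the primitive subspace forces the kernel of $-\wedge v$ to be trivial, and a dimension count in Poincar\'e-dual cohomology then gives surjectivity.

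I would prove the HR statement by induction on the total rank $E:=e_1+\cdots+e_k$. The base case $E=k$ has each $E_i$ of rank one, so every $c_{e_i}(E_i)$ is a K\"ahler class and the HR statement is exactly the Dinh-Nguyen theorem recalled in the introduction. For the inductive step assume $e_k\ge 2$ and form the Grothendieck projectivization $\rho:Y=\P(E_k)\to X$ with tautological K\"ahler class $\xi=c_1(\mathcal{O}_Y(1))$ and universal exact sequence $0\to U\to\rho^*E_k\to\mathcal{O}_Y(1)\to 0$, so that the Grothendieck relation yields $\rho^*c_{e_k}(E_k)=c_{e_k-1}(U)\cdot\xi$. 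For $m\gg 0$ the $\R$-twisted bundle $U\langle m\xi\rangle$ is ample on $Y$ of rank $e_k-1$, and expanding $c_{e_k-1}(U)$ as an $\R$-polynomial in the Chern classes $c_j(U\langle m\xi\rangle)$ (for $j\le e_k-1$) and $\xi$, the class $\rho^*v$ becomes an $\R$-linear combination of products of Chern classes of ample $\R$-twisted bundles on $Y$ to which the inductive hypothesis can be brought to bear; descending back to $X$ via $\rho^*$ (which is injective and compatible with the pairings after pushforward) would then complete the induction.

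The main obstacle is that the Chern-class expansion just described involves intermediate Chern classes $c_j(U\langle m\xi\rangle)$ with $j<e_k-1$, not just top Chern classes, so the statement of the inductive hypothesis must be strengthened to HR for an appropriate class of polynomial combinations of Chern classes of ample bundles. This broader setup is precisely what the Hermitian-metric framework of the paper is built to handle: HR is encoded as positive-definiteness of a Hermitian form on cohomology and is stable under the $\R$-linear manipulations forced by the Chern-class expansions, whereas the bare Lefschetz isomorphism property is not. Controlling the signs and relative sizes of the coefficients in the expansion, so that positive-definiteness survives the convex combination, is the delicate technical point where the new Hodge-theoretic machinery of earlier sections would be invoked.
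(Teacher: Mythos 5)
Your route is entirely different from the paper's, and the inductive step contains a gap that I believe is fatal rather than merely delicate. The paper proves Theorem~\ref{theorem 1} through the quantitative Proposition~\ref{proposition 1}: a uniform lower bound $\|c_e\cdot a\|_{\omega_0}\ge c\|a\|_{\omega_0}$, with $c$ depending only on $(X,\omega_0)$ and a norm bound $M$ on the Chern classes. Such an inequality is closed under limits, so it suffices to verify it for $\Q$-twisted bundles; there a Bloch-Gieseker covering $\pi:Y\to X$ removes the twist, and the hard Lefschetz quasi-isometry~(\ref{isometry0}) for $\mathcal{O}_{\P(E\langle\omega_0\rangle)}(1)$ on $\P(\pi^*E_1\oplus\cdots\oplus\pi^*E_k)$, combined with a volume computation and norm comparisons, yields a constant independent of the covering degree. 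There is no induction on rank and no Grothendieck-relation expansion. The quantitative formulation is precisely what lets the argument survive the $\Q\to\R$ limit, which is the actual difficulty the theorem is about; your plan never confronts this degeneration issue.

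In your inductive step, the expansion $c_{e_k-1}(U)=\sum_{i=0}^{e_k-1}(-m\xi)^{e_k-1-i}c_i(U\langle m\xi\rangle)$ has alternating signs, so $\rho^*v$ becomes a \emph{signed} $\R$-linear combination of products of positive classes. But the set of classes satisfying the Hodge-Riemann relation is only an open subset of $H^{r,r}$, not a convex cone; positive-definiteness of the associated Hermitian form is not preserved under differences. Your assertion that HR is ``stable under the $\R$-linear manipulations forced by the Chern-class expansions'' is therefore the entire content of the proof, not a deferrable technicality, and as stated it is false. Compounding this, the terms $c_i(U\langle m\xi\rangle)$ with $i<e_k-1$ are intermediate Chern classes, not top Chern classes of ample bundles, so the inductive hypothesis does not reach them (and whether such classes satisfy HR is itself a hard problem with known failures in high codimension, cf.\ \cite{RT}); moreover $\rho^*E_j$ is only nef on $\P(E_k)$, not ample, so the remaining factors also require twisting and re-expansion, multiplying the sign problems.
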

The full statement of theorem \ref{theorem 1} also includes the Hodge-Riemann relation (see theorem \ref{theorem 1 HR} below). We refer to \cite{L} section 6.2, 8.1.A and \cite{RT} section 2.4 for basics on $\R$-twisted vector bundles. Theorem \ref{theorem 1} is trivial when the vector bundles have no $\R$-twist.

For Combinations of ample vector bundles and mixed K\"ahler classes, we have the following
\begin{theorem}\label{theorem 3}
    Let $X$ be a compact K\"ahlerian manifold of dimension $n$, $E$ an ample vector bundle on $X$ of rank $e$, $w_1,\cdots,w_k\in H^{1,1}(X,\R)$ K\"ahler classes. Let $p,q$ be nonnegative integers satisfying $p+q+e+k=n$. Then
        \[-\wedge c_e(E)w_1\cdots w_k:H^{p,q}(X)\rightarrow H^{n-q,n-p}(X)\]
        is an isomorphism.
\end{theorem}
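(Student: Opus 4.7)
The plan is to reduce Theorem~\ref{theorem 3} to Dinh--Nguyen's theorem by passing to the projective bundle $\pi : Y = \mathbb{P}(E^{*}) \to X$. Since $E$ is ample on the compact K\"ahler manifold $X$, the tautological class $\xi = c_1(\mathcal{O}_Y(1))$ is K\"ahler on $Y$, which is itself a compact K\"ahler manifold (in fact projective by Kodaira) of dimension $N = n + e - 1$. The pullback $\pi^{*} : H^{*}(X) \to H^{*}(Y)$ is injective with canonical splitting $\alpha \mapsto \pi_{*}(\xi^{e-1} \cdot \alpha)$, and Leray--Hirsch presents $H^{*}(Y)$ as a free $H^{*}(X)$-module with basis $1, \xi, \ldots, \xi^{e-1}$.

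For small $\epsilon > 0$, the perturbed classes $\tilde w_i = \pi^{*} w_i + \epsilon \xi$ are K\"ahler on $Y$. Applying Dinh--Nguyen to the $k + e - 1$ K\"ahler classes $\tilde w_1, \ldots, \tilde w_k, \xi, \ldots, \xi$ on $Y$ yields hard Lefschetz and Hodge--Riemann statements for multiplication by $\tilde w_1 \cdots \tilde w_k \, \xi^{e-1}$ on appropriate cohomology groups of $Y$. Letting $\epsilon \to 0^{+}$ preserves the non-strict part of this positivity, giving the corresponding statement for $\pi^{*}(w_1 \cdots w_k) \, \xi^{e-1}$.

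The next step is to translate this back to $X$. For $\alpha, \beta \in H^{p,q}(X)$, the Dinh--Nguyen pairing on $Y$ evaluated on lifts of the form $\pi^{*}\alpha \cdot \xi^{i}$ for suitable $i$ can, via the projection formula and the Grothendieck defining relation
\[\sum_{i=0}^{e} \pi^{*} c_i(E) \, \xi^{e-i} = 0,\]
be rewritten as the bilinear form $\int_X \alpha \, \bar\beta \, c_e(E) \, w_1 \cdots w_k$ on $X$, up to explicit contributions from $c_1(E), \ldots, c_{e-1}(E)$ that are to be handled inductively on the rank $e$ (the base case $e = 1$ being the line bundle situation, which is a direct instance of Dinh--Nguyen).

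The main obstacle is that $c_e(E)$ itself is not K\"ahler, so Dinh--Nguyen cannot be applied directly on $X$; the role of the projective bundle is precisely to dissolve $c_e(E)$ into the K\"ahler class $\xi$ together with lower Chern classes via Grothendieck's relation. The Hermitian-metric and harmonic-form theory developed in the earlier sections of the paper supplies the natural framework for this transfer of positivity: one constructs the pair $(v, w)$ with $v = c_e(E) w_1 \cdots w_k$ on $X$ by pushing forward the pair associated to $\tilde w_1 \cdots \tilde w_k \xi^{e-1}$ on $Y$, and the Hodge--Riemann relations on $X$ are then inherited from those on $Y$.
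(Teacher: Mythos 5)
Your high-level geometry is right: pass to the projective bundle $\mathbb{P}(E)$ so that $\xi=c_1(\mathcal{O}(1))$ is K\"ahler, perturb $\pi^{*}w_i$ by $\epsilon\xi$ to get K\"ahler classes, apply Dinh--Nguyen there, and try to let $\epsilon\to 0^{+}$. This is indeed the skeleton of the paper's argument, and the role of the Grothendieck relation and of the free $H^{*}(X)$-module structure of $H^{*}(\mathbb{P}(E))$ is as you say. But there is a real gap at the $\epsilon\to 0^{+}$ step, and that gap is exactly what theorem \ref{theorem 2} (and the whole of sections 4--7) is built to close.

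The problem is that for $\epsilon>0$, Dinh--Nguyen gives you that $-\wedge\tilde w_1\cdots\tilde w_k\,\xi^{e-1}$ is an isomorphism and that the associated Hermitian form is positive definite on the primitive part. When you let $\epsilon\to 0^{+}$, as you acknowledge, you only retain the non-strict statement: the limiting form $\int_Y\alpha\bar\beta\,\pi^{*}(w_1\cdots w_k)\xi^{e-1}$ is positive \emph{semi}-definite, and the limit map $-\wedge\pi^{*}(w_1\cdots w_k)\xi^{e-1}$ is not even injective (the $\pi^{*}w_i$ are only nef, with genuine kernel coming from the fibre direction; the map really degenerates). A semi-definite form gives you nothing about non-degeneracy of the form on $X$, and so this limiting argument does not, by itself, force $a=0$ when $a\cdot c_e(E)w_1\cdots w_k=0$. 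This is precisely the degeneration-under-limit difficulty the paper flags in the introduction, and the reason the qualitative Dinh--Nguyen statement must be replaced by a quantitative one.

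What the paper actually proves and uses in place of this limit is the inclusion of kernels
\[
\ker\Bigl(-\wedge\,\xi\cdot\pi^{*}w_1\cdots\pi^{*}w_k\Bigr)\ \subset\ \ker\Bigl(-\wedge\,\pi^{*}w_1\cdots\pi^{*}w_k\Bigr)
\]
on $H^{p,q}(\mathbb{P}(E))$ (theorem \ref{theorem 2}, applied with $u=\xi\cdot\pi^{*}w_1\cdots\pi^{*}w_k$ and $v=\pi^{*}w_1\cdots\pi^{*}w_k$). This is a strictly stronger statement than the semi-definiteness you get from the naive limit, and its proof requires all the machinery you only gesture at: the metric $\|\cdot\|_{(v,w)}$, the harmonic representative theory of section 5 (in particular equation (\ref{global norm = local norm}) and corollary \ref{corollary: pointwise zero}), and the two local estimates propositions \ref{proposition: local 1}, \ref{proposition: local 2} that control the rate at which things degenerate as $\epsilon\to 0^{+}$. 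Your sentence ``the Hermitian-metric and harmonic-form theory developed in the earlier sections of the paper supplies the natural framework for this transfer of positivity'' is pointing at the right place, but you present it as scaffolding rather than as the actual proof of the step that fails otherwise.

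Finally, two smaller points. The translation back to $X$ does not require an induction on the rank $e$: once you know $(\pi^{*}a\cdot\gamma)\cdot\pi^{*}w_1\cdots\pi^{*}w_k=0$ with $\gamma=(-1)^{e-1}\xi^{e-1}+\cdots+\pi^{*}c_{e-1}(E)$, you simply read off the coefficient of $\xi^{e-1}$ in the free basis $1,\xi,\dots,\xi^{e-1}$ over $H^{*}(X)$ and obtain $a\cdot w_1\cdots w_k=0$, which forces $a=0$ by injectivity of $-\wedge w_1\cdots w_k$ in the relevant bidegree. And ``pushing forward the pair'' to get $(v,w)=(c_e(E)w_1\cdots w_k,w)$ as a Hodge--Riemann pair on $X$ is circular as stated, since establishing that $(v,w)$ is a Hodge--Riemann pair \emph{is} the content of theorem \ref{theorem 3}, not a tool to prove it.
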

\begin{remark}
    In theorem \ref{theorem 3} we allow the ample vector bundle $E$ to be $\R$-twisted, or more generally `K\"ahler' vector bundles (see section 8). These are natural generalizations of K\"ahler classes. A K\"ahler line bundle is nothing but a K\"ahler class.
\end{remark}
We also have the following generalization of the theorem of Dinh-Nguyen to the case when not all the classes are strictly positive.
\begin{theorem}\label{theorem 2}
    Let $X,Y$ be compact K\"ahlerian manifolds of dimension $m,n$ respectively and $\pi:Y\rightarrow X$ a holomorphic map such that $\pi^*:H^*(X,\C)\rightarrow H^*(Y,\C)$ makes $H^*(Y,\C)$ into a free $H^*(X,\C)$-module. Let $s\leq r$ be non-negative integers, $w_1,\cdots,w_s\in\Kah(X),\ w_{s+1},\cdots,w_r\in\Kah(Y)$. Let $v=\pi^*w_1\cdots\pi^*w_s$, $u=v\cdot w_{s+1}\cdots w_r$. Let $p,q$ be non-negative integers satisfying $p+q+r=n$. Then
    \[\ker\left(-\wedge u:H^{p,q}(Y)\rightarrow H^{p+r,q+r}(Y)\right)\subset\ker\left(-\wedge v: H^{p,q}(Y)\rightarrow H^{p+s,q+s}(Y)\right).\]
\end{theorem}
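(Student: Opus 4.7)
The plan is to perturb $v$ into a product of genuinely K\"ahler classes on $Y$, apply the Dinh-Nguyen theorem there, and then exploit the freeness of $H^*(Y,\C)$ over $H^*(X,\C)$ to descend the vanishing to $X$, where Dinh-Nguyen applies a second time.

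Fix a K\"ahler class $\omega$ on $Y$. For $\epsilon>0$, each $w_i^\epsilon=\pi^*w_i+\epsilon\omega$ $(1\le i\le s)$ is K\"ahler on $Y$, so $v_\epsilon=\prod_{i\le s}w_i^\epsilon$ and $u_\epsilon=v_\epsilon\cdot w_{s+1}\cdots w_r$ are products of $r$ K\"ahler classes on $Y$ with $p+q+r=n=\dim Y$. Dinh-Nguyen therefore yields the Lefschetz isomorphism $-\wedge u_\epsilon:H^{p,q}(Y)\to H^{n-q,n-p}(Y)$ and positivity of $Q_\epsilon(\beta,\gamma)=(-1)^q\sqrt{-1}^{(p+q)^2}\int_Y\beta\bar\gamma\,u_\epsilon$ on the primitive subspace $\ker(-\wedge u_\epsilon\omega)$. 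As $\epsilon\to 0$, $u_\epsilon\to u$ and $v_\epsilon\to v$, providing a one-parameter family out of which the degenerate pair $(v,u)$ should inherit its relations.

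Next, pick a Hodge-graded basis $\{e_i\}$ of $H^*(Y,\C)$ as a free $H^*(X,\C)$-module and write $\alpha=\sum_i\pi^*a_i\cdot e_i$. Setting $V=w_1\cdots w_s\in H^{s,s}(X,\R)$ and $W=w_{s+1}\cdots w_r\in H^{r-s,r-s}(Y,\R)$, we have $\alpha\wedge v=\sum_i\pi^*(a_iV)\cdot e_i$, which vanishes iff $a_iV=0$ in $H^*(X,\C)$ for every $i$; expanding $e_iW=\sum_j\pi^*c_{ij}e_j$ in the basis turns the hypothesis $\alpha\wedge u=0$ into the linear system $\sum_i(a_iV)c_{ij}=0$ on $X$. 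By the projection formula, testing against $\eta\in H^{n-p-s,n-q-s}(Y)$ gives $\int_Y\alpha v\eta=\int_X V\cdot\pi_*(\alpha\eta)$, so the target vanishing of $\alpha\wedge v$ on $Y$ becomes a family of vanishings of $V\cdot\pi_*(\alpha\eta)\in H^{m,m}(X,\C)$, the natural setting for Dinh-Nguyen on $X$ applied to $w_1,\dots,w_s$.

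The hard part is to bridge these two pictures: to promote the $Q_\epsilon$-positivity on $Y$ into HR-positivity on $X$ for the classes $a_iV$, I would need the ``relative intersection matrix'' $\bigl(\pi_*(e_i\bar e_jW)\bigr)_{ij}$ to be, in each bidegree, positive-definite as a Hermitian form on the basis components --- a fiberwise HR statement for $W$ transported to $X$ via $\pi_*$. Combined with the limit $\epsilon\to 0$ of Dinh-Nguyen on $Y$, this should reduce $\ker(-\wedge u)\subset\ker(-\wedge v)$ to the scalar-valued HR positivity of $V$ on $X$, i.e.\ the classical Dinh-Nguyen theorem. Setting up this relative HR precisely, matching bidegrees and controlling the $\epsilon\to 0$ limit of the primitive decomposition, is what the paper's Hermitian metric for pairs $(v,w)$ seems designed to do, and is the main technical obstacle in this approach.
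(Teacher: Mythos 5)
Your proposal correctly identifies the perturbation $v_\epsilon=(\pi^*w_1+\epsilon\omega)\cdots(\pi^*w_s+\epsilon\omega)$, the Dinh-Nguyen input for each $\epsilon>0$, and the role of freeness of $H^*(Y,\C)$ over $H^*(X,\C)$, but it stops exactly where the actual work begins and the gap you flag at the end is real and not easily bridged by what you have. The issue is quantitative: for each $\epsilon>0$, $-\wedge u_\epsilon$ is injective on $H^{p,q}(Y)$ by Dinh-Nguyen, yet the hypothesis is $\alpha u=0$, not $\alpha u_\epsilon=0$. You have $\alpha u_\epsilon\to 0$ as $\epsilon\to 0$, but the operator norm of $(-\wedge u_\epsilon)^{-1}$ may blow up, so nothing follows. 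The ``relative intersection matrix'' $\bigl(\pi_*(e_i\bar e_j W)\bigr)$ positivity you propose is unsubstantiated --- without a primitivity condition intersection pairings have mixed signature, and the freeness hypothesis alone gives no control of signs --- and it does not give the missing quantitative link.

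The paper's route is genuinely different at the key step. After setting $b=a\cdot w_{s+1}\cdots w_r$ (so $bv=0$), it uses freeness together with Corollary~\ref{corollary: pointwise zero} to upgrade the cohomological identity $bv=0$ to a \emph{form-level} identity $\beta\nu=0$ for some closed $\beta$ representing $b$; this is where freeness enters, not through any intersection matrix. Passing to differential forms makes the problem local, and the two local propositions (Propositions~\ref{proposition: local 1} and \ref{proposition: local 2}), applied pointwise through the metric $|\cdot|_{(\nu_\epsilon,\omega)}$ and integrated via the harmonic-form machinery of Theorem~\ref{theorem: harmonic space}, give the chain $\|a u_\epsilon\|_{(u_\epsilon,w)}^2\lesssim\epsilon$ and $\|a v_\epsilon\|_w^2\lesssim\|a\|_{(u_\epsilon,w)}^2$, whence $\|av\|_w=0$ in the limit. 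So what you are missing is precisely the metric $\|\cdot\|_{(v,w)}$, its harmonic representatives, and the two local inequalities; these supply the quantitative control that a bare ``let $\epsilon\to 0$'' cannot, and your descent-to-$X$ picture via the projection formula, while plausible-looking, is not the mechanism the proof actually uses.
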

The following is another generalization of the theorem of Dinh-Nguyen.
\begin{theorem}\label{theorem 4}
    Let $X$ be a compact K\"ahlerian manifold of dimension $n$. Let $w_1,\cdots,w_k\in H^{1,1}(X,\R)$ be K\"ahler classes. Let $p,q,r$ be nonnegative integers satisfying $p+q+r+k=n$. Let $c\in H^{r,r}(X,\R)$. Suppose that there is a compact K\"ahlerian manifold $Y$ of dimension $n+r-1$ and a holomorphic map $\pi:Y\rightarrow X$ such that $\pi^*c$ factors as $\pi^*c=w\cdot b$, with $w\in H^{1,1}(Y,\R)$ being a K\"ahler class and $b\in H^{r-1,r-1}(Y,\R)$ satisfying the property that $\pi^*(-)\cdot b: H^{p,q}(X)\rightarrow H^{p+r-1,q+r-1}(Y)$ is injective. Then
    \[-\wedge c\cdot w_1\cdots w_k: H^{p,q}(X)\rightarrow H^{n-q,n-p}(X)\]
    is an isomorphism.
\end{theorem}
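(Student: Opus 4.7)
The plan is to lift the vanishing $a \cdot c \cdot w_1 \cdots w_k = 0$ from $X$ to $Y$ via $\pi^*$, use the factorization $\pi^* c = w \cdot b$ to re-express it as a hard Lefschetz problem on $Y$ for the mixed class $w \cdot \pi^* w_1 \cdots \pi^* w_k$, and then conclude by combining hard Lefschetz on $Y$ with the injectivity hypothesis.

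By Hodge symmetry together with Poincar\'e duality on $X$, $\dim H^{p,q}(X) = \dim H^{n-q,n-p}(X)$, so it suffices to prove injectivity of $-\wedge c \cdot w_1 \cdots w_k$ on $H^{p,q}(X)$. Let $a \in H^{p,q}(X)$ satisfy $a \cdot c \cdot w_1 \cdots w_k = 0$. Applying $\pi^*$ and substituting $\pi^* c = w \cdot b$,
\[(\pi^* a \cdot b) \cdot w \cdot \pi^* w_1 \cdots \pi^* w_k = 0 \quad \text{in } H^{n-q,n-p}(Y).\]
Introduce $\tilde a := \pi^* a \cdot b \in H^{p+r-1,q+r-1}(Y)$, $u := w \cdot \pi^* w_1 \cdots \pi^* w_k$, $N := \dim Y = n + r - 1$, $p' := p + r - 1$, and $q' := q + r - 1$. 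One checks $p' + q' + (k+1) = N$, so $\tilde a \cdot u \in H^{N - q', N - p'}(Y)$, which is Poincar\'e dual to $H^{p',q'}(Y)$. Granting that $-\wedge u : H^{p',q'}(Y) \to H^{N-q',N-p'}(Y)$ is injective, we conclude $\tilde a = 0$; the injectivity of $\pi^*(-) \cdot b$ on $H^{p,q}(X)$ then yields $a = 0$.

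The remaining task is injectivity of $-\wedge u$ on $H^{p',q'}(Y)$. Since $w$ is strictly K\"ahler on $Y$ but the pullbacks $\pi^* w_i$ are in general only nef, Dinh-Nguyen cannot be applied directly. My approach is by approximation: fix a K\"ahler form $\omega$ on $Y$ and, for $\epsilon > 0$, apply the Dinh-Nguyen hard Lefschetz and Hodge-Riemann relations to the strictly K\"ahler product
\[u_\epsilon := w \cdot (\pi^* w_1 + \epsilon \omega) \cdots (\pi^* w_k + \epsilon \omega),\]
obtaining a family of Hermitian forms $Q_\epsilon(\alpha,\beta) = (-1)^{q'} \sqrt{-1}^{(p'+q')^2} \int_Y \alpha \bar\beta \, u_\epsilon$ that are positive definite on the $u_\epsilon$-primitive part of $H^{p',q'}(Y)$. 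Passing to the limit $\epsilon \to 0$ yields a positive semi-definite $Q_0$; combined with the abstract pair-metric Hodge theory developed in the earlier sections of the paper, applied with the pair $(v, w) := (\pi^* w_1 \cdots \pi^* w_k, w)$ on $Y$, this should extract the desired injectivity of $-\wedge u$ on $H^{p',q'}(Y)$.

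I expect the principal obstacle to be verifying that the pair $(\pi^* w_1 \cdots \pi^* w_k,\, w)$ satisfies the Hodge-Riemann hypothesis required to trigger the abstract pair-metric framework; this amounts to a limiting non-degeneracy statement as the pullback factors degenerate from K\"ahler to merely nef. Once this is secured, the mixed hard Lefschetz for $u$ on $H^{p',q'}(Y)$ follows, and the theorem is proved.
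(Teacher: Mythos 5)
Your reduction to proving that $-\wedge u$ is injective on all of $H^{p',q'}(Y)$ is where the argument breaks. This is strictly stronger than anything the paper's machinery produces: theorem~\ref{theorem 2}, even under its additional freeness hypothesis, yields only $\ker(-\wedge u)\subset\ker(-\wedge \pi^*w_1\cdots\pi^*w_k)$, not injectivity. Worse, the claimed injectivity is simply false once $r\geq 3$. Take $Y=X\times F$ with $X$ a $4$-fold, $F$ a surface, $\pi$ the first projection, $k=1$, $p=q=0$ (so $r=3$, $p'=q'=2$, $N=6$); any nonzero $a\in H^{2,2}(X)$ with $a\cdot w_1=0$ (such $a$ exist whenever $h^{2,2}(X)>h^{1,1}(X)$, e.g.\ $X=K3\times K3$) gives a nonzero $\pi^*a\in\ker\bigl(-\wedge u:H^{2,2}(Y)\to H^{4,4}(Y)\bigr)$. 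The approximation scheme cannot rescue this: $Q_\epsilon$ is nondegenerate for $\epsilon>0$ but only positive semi-definite at $\epsilon=0$, and the example shows $Q_0$ really can degenerate. Likewise $(\pi^*w_1\cdots\pi^*w_k,\,w)$ is generally \emph{not} a Hodge-Riemann pair in the sense of definition~\ref{definition: HR global cohomology}---hard Lefschetz itself can already fail---so the harmonic framework of sections 4--5 cannot be switched on for this pair; controlling exactly this degeneration is the entire point of the paper's quantitative estimates.

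The workable route aims only at the weaker conclusion $\tilde a\cdot\pi^*(w_1\cdots w_k)=0$, mirroring theorem~\ref{theorem 2}. The one place that proof uses freeness is to produce a $d$-closed $\beta$ representing $\tilde a\cdot w$ with $\beta\wedge\pi^*\omega_1\cdots\pi^*\omega_k=0$ pointwise; in theorem~\ref{theorem 4} freeness is replaced by the observation that $\tilde a\cdot w=\pi^*(a\cdot c)$ is itself a pullback, so corollary~\ref{corollary: pointwise zero} applied on $X$ to $a\cdot c$ (using $a\cdot c\cdot w_1\cdots w_k=0$) yields $\gamma\in\A^{p+r,q+r}(X)$ with $\gamma\cdot\omega_1\cdots\omega_k=0$, and one sets $\beta=\pi^*\gamma$. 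Propositions~\ref{proposition: local 1}, \ref{proposition: local 2}, lemma~\ref{lemma: eliminate equivalent local} and theorem~\ref{theorem: harmonic space} then apply unchanged to bound $\|\pi^*(w_1\cdots w_k)\cdot\tilde a\|_\omega$, and letting $\epsilon\to 0$ gives $\pi^*(a\cdot w_1\cdots w_k)\cdot b=0$; injectivity of $\pi^*(-)\cdot b$ and Dinh--Nguyen's hard Lefschetz on $X$ then finish. (Note the chain actually applies the injectivity hypothesis to $a\cdot w_1\cdots w_k\in H^{p+k,q+k}(X)$ rather than to $H^{p,q}(X)$ as written; in the application to theorem~\ref{theorem 3} the map is injective in every degree, so this discrepancy is immaterial there.) None of these ingredients---the pointwise-vanishing representative, the local propositions, the harmonic decomposition---appears in your proposal.
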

Theorem \ref{theorem 1} is relatively simple and is proven in section 3, but it already contains some new ideas of this paper. Theorem \ref{theorem 3}-\ref{theorem 4} take up most of this paper. The proof of theorem \ref{theorem 2} and theorem \ref{theorem 4} are basically the same, and theorem \ref{theorem 3} will be derived easily as a corollary of either of them. In fact, theorem \ref{theorem 3} is the motivation of both theorem \ref{theorem 2} and \ref{theorem 4}. It is worth mentioning that \cite{X} contains theorems of similar kind as theorem \ref{theorem 2}. But the situation they consider is much simpler than ours. Basically they add the additional assumption that $\mathrm{codim}(Y/X)$ is small. In that case the pointwise Hodge-Riemann relation holds, so their theorem is a simple application of the local-to-global argument of \cite{DN3} after establishing the local result. But in our case the local Hodge-Riemann relation does not hold. We need to improve both the local-to-global part and the local part. Overcoming these difficulties is necessary for our application to theorem \ref{theorem 3}.

\subsection{Outline I}
The statement of the main theorems above does not involve much about metrics, but we start with a discussion about metrics in section 2. In fact, we have the following principle (or rather called a philosophy) underlying all the proofs of this paper:

\textbf{Principle:} One should always try to prove a quantitative version of a statement.

Usually this principle appears in the following form: if there is a proposition stating that a homomorphism between two vector spaces is an isomorphism, then a much more useful version of the proposition would be that the homomorphism is a quasi-isometry with respect to some metrics on the two vector spaces. Here we require that the quasi-isometry constant to be independent of some choices (otherwise every isomorphism between two finite dimensional real vector spaces would be a quasi-isometry with respect to any choice of metrics). In other words the operator norm of both the homomorphism and its inverse are bounded. We shall refer to this principle as the `quantitative principle'. This is well illustrated by the proof of theorem \ref{theorem 1}, which we now sketch.

We first state the full version of theorem \ref{theorem 1}, with the Hodge-Riemann relation included.
\begin{theorem}\label{theorem 1 HR}
    Let $X$ be a smooth projective variety of dimension $n$, $E_1,\cdots,E_k$ ample $\R$-twisted vector bundles of rank $e_1,\cdots,e_k$ on $X$. Let $p,q$ be nonnegative integers satisfying $p+q+e_1+\cdots+e_k=n$. Then
    \begin{itemize}
        \item[(1)] (Hard Lefschetz)
        \[-\wedge c_{e_1}(E_1)\cdots c_{e_k}(E_k):H^{p,q}(X)\rightarrow H^{n-q,n-p}(X)\]
        is an isomorphism.
        \item[(2)] (Hodge-Riemann relation) Let $h\in N^1(X)_\R$ be an arbitrary ample class. Then The Hermitian form on $H^{p,q}(X)$
        \[\left<a,b\right>=(-1)^q\sqrt{-1}^{(p+q)^2}\int_Xa\bar{b}\cdot c_{e_1}(E_1)\cdots c_{e_k}(E_k)\]
        is positive definite on the subspace
        \[P^{p,q}(X)=\ker\left(-\wedge c_{e_1}(E_1)\cdots c_{e_k}(E_k)h:H^{p,q}(X)\rightarrow H^{n-q+1,n-p+1}(X)\right).\]
    \end{itemize}
\end{theorem}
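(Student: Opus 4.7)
The plan is to prove the theorem by induction on the excess rank $\sum_i (e_i - 1)$, reducing at each step via the projective bundle construction and terminating at the classical Dinh--Nguyen theorem for products of K\"ahler classes (which is the base case $e_i = 1$ for all $i$). Concretely, if some $e_i \geq 2$, say $i=1$, consider $\pi : Y = \mathbb{P}(E_1) \to X$ in a convention so that the tautological class $\xi = c_1(\mathcal{O}_Y(1))$, twisted to absorb the $\mathbb{R}$-twist of $E_1$, is K\"ahler on $Y$. The tautological exact sequence $0 \to S \to \pi^{\ast}E_1 \to \mathcal{O}(1) \to 0$ yields the Chern-class identity $\pi^{\ast} c_{e_1}(E_1) = \xi \cdot c_{e_1 - 1}(S)$ on $Y$, where $S$ has rank $e_1 - 1$. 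After a suitable $\mathbb{R}$-twist (by a combination of $\xi$ and pullbacks from $X$), $S$ becomes an ample $\mathbb{R}$-twisted vector bundle on $Y$, and similarly each $\pi^{\ast} E_j$ for $j \neq 1$ becomes ample after a suitable $\mathbb{R}$-twist on $Y$.

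Using the projection formula, the Hermitian form $\int_X a \bar{b} \, c_{e_1}(E_1) \cdots c_{e_k}(E_k)$ on $X$ is identified with a Hermitian form on $Y$ involving the rank-one K\"ahler class $\xi$, the rank-$(e_1-1)$ bundle $S$, and the rank-$e_j$ bundles $\pi^{\ast} E_j$ for $j \geq 2$. Counting excess ranks: on $Y$ we now have bundles of ranks $1, e_1-1, e_2, \ldots, e_k$, so the total excess rank is $0 + (e_1 - 2) + (e_2 - 1) + \cdots + (e_k - 1) = \sum_i(e_i - 1) - 1$, strictly less than on $X$. Iterating until all ranks are $1$, we reduce to the Dinh--Nguyen theorem on the iterated projective bundle, from which HRR and hard Lefschetz on $X$ will be extracted via pushforward and Leray--Hirsch (since $\pi^{\ast} H^{\ast}(X)$ is a direct summand of $H^{\ast}(Y)$ at each step).

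The main obstacle is the compatibility of the Lefschetz and HRR statements with the projective bundle reduction at each step. Two checks are needed: (i) the subbundle $S$ and the pullback bundles $\pi^{\ast} E_j$ can be made ample $\mathbb{R}$-twisted by concrete twists on $Y$, which follows from ampleness of the $E_i$ on $X$ combined with the definition of $\mathbb{R}$-twisted ampleness on the tautological space; and (ii) the primitive subspace for HRR on $Y$, when restricted to $\pi^{\ast} H^{p,q}(X)$, matches $\pi^{\ast} P^{p,q}(X)$. Issue (ii) is the crux: the auxiliary ample class $h$ from the statement pulls back only to a nef class $\pi^{\ast} h$ on $Y$, not a K\"ahler class, so the standard Lefschetz/HRR framework on $Y$ does not apply directly. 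This is precisely where the quantitative principle announced in the introduction enters: one proves HRR on $Y$ as a quasi-isometry statement with uniform bounds under perturbation of the auxiliary K\"ahler class, and then passes to the limit where the perturbed K\"ahler class degenerates to $\pi^{\ast} h$. The essential use of $\mathbb{R}$-twisted vector bundles in the inductive hypothesis---even when the original $E_i$ are ordinary ample---is what enables the inductive step to close, since the subbundles $S$ and pullbacks $\pi^{\ast} E_j$ are only ample as $\mathbb{R}$-twisted bundles.
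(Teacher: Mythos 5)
The proposal has a genuine gap at the heart of the inductive step. On $Y=\P(E_1)$ the tautological exact sequence $0\to S\to\pi^*E_1\to\O_Y(1)\to 0$ indeed gives $\pi^*c_{e_1}(E_1)=\xi\cdot c_{e_1-1}(S)$, but the tautological subbundle $S$ is \emph{anti-ample} along the fibers of $\pi$: restricted to a fiber $\P^{e_1-1}$ it is the kernel of $\O^{\oplus e_1}\twoheadrightarrow\O(1)$, so $c_1(S)|_{\text{fiber}}=-\xi|_{\text{fiber}}<0$. No $\R$-twist can fix this while preserving the Chern class identity: replacing $S$ by $S\langle\delta\rangle$ changes $c_{e_1-1}(S)$ and breaks $\pi^*c_{e_1}(E_1)=\xi\cdot c_{e_1-1}(S)$. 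The same problem hits the pullbacks $\pi^*E_j$ for $j\geq 2$, which are trivial along the $\pi$-fibers, hence not ample, and any twist $\pi^*E_j\langle\epsilon\xi\rangle$ that restores ampleness perturbs $c_{e_j}$, so the product no longer recovers $\pi^*\bigl(c_{e_1}(E_1)\cdots c_{e_k}(E_k)\bigr)$. Thus the induction on $\sum(e_i-1)$ does not close: the bundle you need to feed to the inductive hypothesis is not (even after twisting) in the class of ample $\R$-twisted bundles with the required Chern class.

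You have also misattributed where the quantitative principle is used. You invoke it to handle the fact that $\pi^*h$ is merely nef on $Y$, but in the paper's proof of this theorem the quantitative estimate (Proposition \ref{proposition 1}) is deployed for a different purpose: to pass from $\Q$-twisted bundles to $\R$-twisted ones by approximation, which requires a norm lower bound $\|c_e\cdot a\|_{\omega_0}\geq c\|a\|_{\omega_0}$ that is stable under limits. The nef-degeneration difficulty is the content of Theorems \ref{theorem 2}--\ref{theorem 4}, not of Theorem \ref{theorem 1 HR}. The paper's actual route avoids your obstruction entirely: it reduces to the $\Q$-twisted case via the quantitative estimate, then uses a Bloch--Gieseker covering $Y\to X$ to eliminate the twist, forms the single sum $E=\pi^*E_1\oplus\cdots\oplus\pi^*E_k$ of rank $e=\sum e_i$, and works on $\P(E)$ in one shot. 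There $\pi^*c_e(E)=u\cdot v$ with $u$ the ample tautological class, and the hard Lefschetz quasi-isometry (\ref{isometry0}) for $u$, combined with the duality inequality (\ref{eq300}) against a Hodge star of the original class, yields the required lower bound---no iteration through subbundles and no appeal to ampleness of $S$ or $\pi^*E_j$. The Hodge--Riemann part is then obtained by the standard continuity argument in $t$ for $E_j\langle th\rangle$ and letting $t\to\infty$.
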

It is known how theorem \ref{theorem 1 HR} follows from theorem \ref{theorem 1} (by a standard continuity argument). See, for example, \cite{RT}\cite{LZ}. For completeness we will give a proof of theorem \ref{theorem 1 HR}.

Theorem \ref{theorem 1} is trivial when $E_1,\cdots,E_k$ are all vector bundles (without $\R$-twist). Indeed, let $E=E_1\oplus\cdots\oplus E_k,\ e=e_1+\cdots e_k$, then $c_e(E)=c_{e_1}(E_1)\cdots c_{e_k}(E_k)$. The theorem is reduced to the Bloch-Giesecker theorem. If $E_1,\cdots,E_k$ are all $\Q$-twisted vector bundles, then it is well-known that there exists a finite dominant morphism $\pi:Y\rightarrow X$, with $Y$ smooth, such that $\pi^*E_1,\cdots,\pi^*E_k$ are all vector bundles. (Usually $\pi$ is called a Bloch-Geiseker covering.) It is not difficult to prove that $\pi^*:H^*(X,\C)\rightarrow H^*(Y,\C)$ is injective. This reduces the problem to the previous case. However, the theorem is substantially more difficult when we allow the vector bundles to be $\R$-twisted. Although $N^1(X)_\Q$ is dense in $N^1(X)_\R$, we may approximate the $\R$-twisted vector bundles $E_1,\cdots,E_k$ by $\Q$-twisted vector bundles, but the homomorphism $-\wedge c_{e_1}(E_1)\cdots c_{e_k}(E_k)$ may degenerate when passing to limit. So the statement on $\Q$-twisted bundles does not imply the statement on $\R$-twisted bundles directly. To overcome this difficulty, we evoke the quantitative principle, and prove the following:
\begin{proposition}\label{proposition 1}
    Let $X$ be a smooth projective variety of dimension $n$ and let $\omega_0$ be a K\"ahler metric on $X$. Let $M>0$ be a positive real number. Then there is a constant $c=c(X,\omega_0,M)>0$ with the following property. Let $E_1,\cdots,E_k$ be ample $\R$-twisted vector bundles on $X$ of rank $e_1,\cdots,e_k$ respectively, satisfying the condition $\|c_i(E_j)\|_{\omega_0}\leq M,\ \forall i,j$. Denote by $c_e=c_{e_1}(E_1\left<\omega_0\right>)\cdots c_{e_k}(E_k\left<\omega_0\right>)$. Let $p,q$ be nonnegative integers satisfying $p+q+e_1+\cdots+e_k=n$. Then $\forall a\in H^{p,q}(X)$ we have
    \begin{align}\label{eq31}
        \|c_e\cdot a\|_{\omega_0}\geq c\|a\|_{\omega_0}.
    \end{align}
\end{proposition}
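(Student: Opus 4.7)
My plan is to combine a contradiction argument at the level of cohomology with a positive-form representative of $c_e$, using the known hard Lefschetz theorem in the $\Q$-twisted case (Bloch--Gieseker together with Bloch--Gieseker coverings) as the input that will be triggered at the limit.

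First I would construct a strongly positive representative of $c_e$. Writing each ample $\R$-twisted bundle as $E_j=E_{0,j}\langle\delta_j\rangle$ with $E_{0,j}$ a genuine vector bundle, choose a Hermitian metric $h_j$ on $E_{0,j}$ and a real $(1,1)$-form $\omega_{\delta_j}$ in the class $\delta_j$ so that the twisted curvature endomorphism $\Theta(E_{0,j},h_j)+\omega_{\delta_j}\,\mathrm{id}$ is suitably positive (the existence of such a choice is essentially a quantitative reformulation of ampleness; in rank one it is just a K\"ahler form in the class). Then $\det(\Theta(E_{0,j},h_j)+(\omega_{\delta_j}+\omega_0)\,\mathrm{id})$ represents $c_{e_j}(E_j\langle\omega_0\rangle)$ by a strongly positive $(e_j,e_j)$-form, and the wedge product $\Omega:=\bigwedge_j c_{e_j}(E_j\langle\omega_0\rangle)$ is a strongly positive $(n-p-q,n-p-q)$-form representing $c_e$. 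For the $\omega_0$-harmonic representative $\tilde a$ of $a\in H^{p,q}(X)$, positivity of $\Omega$ combined with the standard sign-adjusted pairing $\tilde a\wedge\overline{\tilde a}$ gives a pointwise bound of the form
\[
c_0(x)\,|\tilde a(x)|_{\omega_0}^2\,\frac{\omega_0^n}{n!},
\]
and applying Cauchy--Schwarz to the cohomological pairing reduces (\ref{eq31}) to a uniform positive lower bound on $\int_X c_0(x)\,\omega_0^n$ depending only on the allowed data.

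The core difficulty is that the hypothesis $\|c_i(E_j)\|_{\omega_0}\leq M$ controls only cohomology, while $c_0(x)$ depends on the pointwise curvature of a chosen metric. I would handle this by contradiction: a failure of (\ref{eq31}) yields sequences $\{E_j^{(\nu)}\}$ of ample $\R$-twisted bundles with bounded Chern classes and classes $a^{(\nu)}\in H^{p,q}(X)$ with $\|a^{(\nu)}\|_{\omega_0}=1$ and $\|c_e^{(\nu)}\cdot a^{(\nu)}\|_{\omega_0}\to 0$. By compactness in the relevant finite-dimensional cohomology groups, after passing to subsequences one has $c_i(E_j^{(\nu)})\to\gamma_{ij}$ and $a^{(\nu)}\to a^\infty\neq 0$, and the limit class $\gamma_e^\infty$ assembled from the $\gamma_{ij}$ via $c_{e_j}(E_j\langle\omega_0\rangle)=\sum_l c_l(E_j)\,\omega_0^{e_j-l}$ annihilates $a^\infty$. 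A small rational perturbation that keeps the $\gamma_{ij}$ inside the ample cone produces an ample $\Q$-twisted approximation, and a Bloch--Gieseker covering $\pi:Y\to X$ with injective $\pi^*:H^\bullet(X,\C)\hookrightarrow H^\bullet(Y,\C)$ trivialises the twist on $Y$, reducing matters to the classical Bloch--Gieseker theorem for top Chern classes of genuine ample vector bundles on $Y$.

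The main obstacle I anticipate is that finite-dimensional isomorphisms can degenerate in a limit of operators, so the $\Q$-approximation by itself does not immediately contradict $\gamma_e^\infty\cdot a^\infty=0$. The positive representative from the first step must do the extra work: it should translate the $\Q$-twisted hard Lefschetz into a uniform quantitative lower bound (depending only on $X$, $\omega_0$, and $M$) that persists under limits because positive forms vary continuously. Making this uniform control precise purely in terms of the cohomological norm $M$, rather than the pointwise curvature of an actual choice of metric, is the hardest and most delicate ingredient of the argument, and is where the ``quantitative principle'' articulated in the introduction has to be executed rigorously.
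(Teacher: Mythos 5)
Your proposal takes a genuinely different route from the paper, and it contains two gaps that in my view cannot be patched as written.

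The first gap is the pointwise lower bound. You assert that since $\Omega$ is a strongly positive $(n-p-q,n-p-q)$-form representing $c_e$, the sign-adjusted Hermitian form $\sqrt{-1}^{(p+q)^2}(-1)^q\,\tilde a\wedge\overline{\tilde a}\wedge\Omega$ is bounded below by $c_0(x)\,|\tilde a(x)|_{\omega_0}^2\,\omega_0^n/n!$ for all $(p,q)$-forms $\tilde a$. This is false. Even in the simplest case $\Omega=\omega^k$ (a strongly positive form) the Hermitian form $\left<\cdot,\cdot\right>_{\omega^k}$ has mixed signature on $\bigwedge^{p,q}T^*_xX$; it is positive-definite only on the primitive part. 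What you are claiming is precisely the pointwise Hodge--Riemann relation, and for the present kind of $\Omega$ (wedge products of Griffiths-positive curvature determinants) it does not hold --- indeed the paper explicitly has to work around the failure of pointwise Hodge--Riemann in a closely related setting. Strong positivity of $\Omega$ only controls pairings against strongly positive $(p,p)$-forms, not arbitrary $(p,q)\wedge(q,p)$ pairings.

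The second gap is the limit/contradiction argument. You extract limits $c_i(E_j^{(\nu)})\to\gamma_{ij}$ and $a^{(\nu)}\to a^\infty\neq 0$ with $\gamma_e^\infty\cdot a^\infty=0$, and then wish to perturb to rational ample data and invoke Bloch--Gieseker. But the $\gamma_{ij}$ lie only in the closure of the set of Chern classes of ample bundles --- they may be Chern classes of a merely nef bundle or may not be Chern classes of any bundle --- and hard Lefschetz genuinely fails there, so the identity $\gamma_e^\infty\cdot a^\infty=0$ with $a^\infty\neq0$ is not a contradiction. A rational perturbation gives nearby classes for which $-\wedge c_e$ is an isomorphism, but this does not rule out that the operator norm of the inverse blows up along your sequence, which is exactly the content of what you are trying to prove; the contradiction argument is circular without a uniform lower bound along the sequence, which is what (\ref{eq31}) itself supplies. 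You acknowledge this and propose to let the positive representative do the extra work, but that brings you back to the first gap.

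The paper avoids both problems by proving the inequality directly rather than by contradiction, and by never using a pointwise Hodge--Riemann statement. It reduces to $\Q$-twisted data by continuity of the inequality, pulls back along a Bloch--Gieseker covering $\pi:Y\to X$, and then works on the projective bundle $P=\P(E)$ with $E=\pi^*E_1\oplus\cdots\oplus\pi^*E_k$. There $\tilde\pi^*c_e=v\cdot u$ with $u=c_1(\O_{\P(E\langle\omega_0\rangle)}(1))$, so (\ref{isometry0}) (the quantitative hard Lefschetz for a single Kähler class) gives $\|\tilde\pi^*[\alpha]\cdot v\|_{\tilde\omega}\approx\|\tilde\pi^*([\alpha]\cdot c_e)\|_{\tilde\omega}$. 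Pairing against $\beta=*\alpha$ gives the lower bound $\|\tilde\pi^*[\alpha]\cdot v\|_{\tilde\omega}\,\|\tilde\pi^*\beta\|_{\tilde\omega}\geq d\|\alpha\|_{\omega_0}^2$, and the two remaining ingredients are elementary: $\sup_X|\eta|_{\omega_0}\lesssim\|\eta\|_{\omega_0}$ on the finite-dimensional harmonic space, and the crucial observation that $\mathrm{Vol}_{\tilde\omega}(P)$ is purely topological and hence controlled by $M$ through a Chern--class computation, so that the degree $d$ of the covering cancels. If you want to keep a contradiction-flavoured structure you would still need a device equivalent to this chain of inequalities; the positive representative and the limit argument do not substitute for it.
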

Equivalently, (\ref{eq31}) says that $-\wedge c_e:H^{p,q}(X)\rightarrow H^{n-q,n-p}(X)$ is an isomorphism and the inverse has norm bounded from above by $c^{-1}$.

It is not hard to see that theorem \ref{theorem 1} follows from this proposition. The advantage of the proposition is that what we need to prove is an inequality which is closed under taking limit: if $E_1^{(m)},\cdots,E_k^{(m)}$ is a sequence of $\Q$-twisted vector bundles (of rank $e_1,\cdots,e_k$) such that $c_i(E_j^{(m)})$ converges to $c_i(E_j)$ for each $i,j$, and the inequality (\ref{eq31}) holds with $E_1\cdots,E_k$ replaced by $E_1^{(m)},\cdots,E_k^{(m)}$ for each $m$, then the inequality also holds for $E_1\cdots,E_k$. Since $N^1(X)_\Q$ is dense in $N^1(X)_\R$, to prove the proposition, it suffices to prove it assuming that $E_1,\cdots,E_k$ are all $\Q$-twisted vector bundles.

So now we can proceed as before and choose a Bloch-Giesecker covering $\pi:Y\rightarrow X$ such that $\pi^*E_1,\cdots,\pi^*E_k$ are vector bundles. Let $E=\pi^*E_1\oplus\cdots\oplus\pi^*E_k$ and we try to apply the Bloch-Giesecker theorem to $E\left<\omega_0\right>$. The Bloch-Giesecker theorem is `not quantitative', stating only that $-\wedge c_e(E\left<\omega_0\right>):H^{p,q}(Y)\rightarrow H^{n-q,n-p}(Y)$ is an isomorphism. In order to prove proposition \ref{proposition 1}, we unwind the proof of the Bloch-Giesecker theorem and make it `quantitative'. The Bloch-Giesecker theorem is proved by applying the hard Lefschetz theorem to $\O_{\P(E\left<\omega_0\right>)}(1)$ on the projective bundle $\P(E)$ of $E$ over $Y$. The corresponding quantitative version holds: $-\wedge c_1(\O_{\P(E\left<\omega_0\right>)}(1)):H^{p+e-1,q+e-1}(\P(E),\C)\rightarrow H^{p+e,q+e}(\P(E),\C)$ is a quasi-isometry (see equation (\ref{isometry0}) in section 2). This is the main idea. The detailed estimates will be given in section 3.

\subsection{Outline II}

Sections 4-8 are devoted to the proof of theorem \ref{theorem 3}-\ref{theorem 4}. Theorem \ref{theorem 3} will be derived as a corollary of theorem \ref{theorem 2}, where $\pi:Y\rightarrow X$ will be taken to be $\pi:\P(E)\rightarrow X$. Since to prove theorem \ref{theorem 3} one would need to deal with the nef classes $\pi^*w_i$, it is natural to ask for generalization of the mixed hard Lefschetz theorem to nef classes. This motivates theorem \ref{theorem 2}. The best one can hope is the following statement: if $h_1,\cdots,h_r$ are nef or semi-ample classes on $X$, and $h_i$ is ample for $i>s$, then $\ker(-\wedge h_1\cdots h_r)\subset\ker(-\wedge h_1\cdots h_s)$, in appropriate dimension. We do not hope this statement to be true in such generality, but succeed in proving it under the additional assumption that $\pi^*: H^*(X)\rightarrow H^*(Y)$ makes $H^*(Y)$ into a free $H^*(X)$-module. We will point out where this assumption is used in the sketch of proof of theorem \ref{theorem 2} below.

The proof of theorem \ref{theorem 2} is quite difficult and it splits into two parts: the local part and the local-to-global part. In this subsection we outline these two parts and then sketch the proof of theorem \ref{theorem 2}. Before that we give some key observations. These observations motivate the definition of the metric $\|\cdot\|_{(v,w)}$ below.

\textbf{Observation 1:} The metric $\|\cdot\|_{\omega}$ on $H^{p,q}(X)$ is completely determined by the cohomology ring $H^{*,*}(X)$. In particular it depends only on the K\"ahler class $[\omega]$ (independent of the particular choice of the K\"ahler form $\omega$ in this class).

As equations (\ref{compute the metric by Q1}) and (\ref{compute the metric by Q2}) in section 2 show, in order to compute $\|\cdot\|_{\omega}$ on $H^{p,q}(X)$, one only need to know the cup product structure on $H^{*,*}(X)$ and the pairing with the fundamental class $[X]$, $\int_X:H^{n,n}(X)\rightarrow\C$ (the details about the variety $X$ and the K\"ahler form $\omega$ can be forgotten). Therefore, if $w\in\Kah(X)$ is any K\"ahler class, we can write $\|\cdot\|_w$ for $\|\cdot\|_\omega$ where $\omega$ is any K\"ahler form in the K\"ahler class $w$.

\textbf{Observation 2:} Let $h\in\Amp(X)$ and $Y$ a smooth divisor representing $h$. Then for $p+q<\dim X-1$, the restriction map $H^{p,q}(X)\rightarrow H^{p,q}(Y)$ is (almost) an isometry, where we use the metrics induced by $h$ and $h|_Y$ respectively on the two spaces.

This is a quatitative version of the Lefschetz hyperplane theorem, which says that the restriction map above is an isomorphism. This observation will follow from lemma \ref{lemma: restriction quasi-isometry 1}.

\textbf{Hyperplane trick}

The following quick proof of the mixed Lefschetz theorem for ample classes surves as a motivation.

The mixed Lefschetz theorem for ample classes: Let $h_1,\cdots,h_k\in N^1(X)$ be ample classes, then $-\wedge h_1\cdots h_k:H^{p,q}(X)\rightarrow H^{n-q,n-p}(X)$ is an isomorphism, where $p+q+k=n$.
\begin{proof}
    Replacing $h_1,\cdots,h_k$ by a multiple we may assume that they are very ample. Take transversally intersecting smooth divisors $Y_1,\cdots,Y_{k-1}$ representing $h_1,\cdots h_{k-1}$. Let $Y=Y_1\cap\cdots\cap Y_k$. Then it is easy to see $\int_Xa\bar{b}\cdot h_1\cdots h_k=\int_Ya\bar{b}\cdot h_k$ for $a,b\in H^{p,q}(X)$. So by the hard Lefschetz theorem on $Y$, the Hermitian form $\left<\alpha,\beta\right>=\int_X\alpha\wedge\bar{\beta}\wedge h_1\cdots h_k$ on $H^{p,q}(X)$ is nondegenerate, proving the statement.
\end{proof}

We see that the mixed Lefschetz theorem for very ample classes $h_1,\cdots,h_k$ follows from the classical Lefschetz theorem for $h_k$ on $Y$. To generalize this idea (for example, to deal with ample classes that may not be rational), we will need a quantitative version. The idea is to investigate the metric induced by $h_k$ on $Y$. We have the following key observation:

\textbf{Observation 3:} Let $v,w\in\Amp(X)$. Suppose that $v$ is very ample and is represented by a smooth divisor $Y$. Then for $a\in H^{p,q}(X)$, $\|a|_Y\|_w$ depends only on $v$, not on $Y$.

This motivates the definition of the metric $\|\cdot\|_{(v,w)}$ below.

\textbf{The metric $\|\cdot\|_{(v,w)}$}

In sections 4-5 we will define an Hermitian metric $\|\cdot\|_{(v,w)}$ on $H^{p,q}(X)$ with $p+q\leq n-r$ or $p+q\geq n+r$, where $v=w_1\cdots w_r\in H^{r,r}(X,\R)$ is the product of $r$ K\"ahler classes and $w$ is a K\"ahler class. It satisfies the following property. If $w_1,\cdots,w_r$ happen to be very ample and $Y$ is a complete intersection of smooth divisors representing them, then $\|a\|_{(v,w)}=\|a|_Y\|_w$ for $a\in H^{p,q}(X)$ with $p+q\leq n-r$, while $\|va\|_{(v,w)}=\|a|_Y\|_w$ for $a\in H^{p,q}(X)$ with $p+q\geq n-r$ (lemma \ref{lemma: restriction identities}).

A local version $|\cdot|_{(\nu,\omega)}$ on $\bigwedge^{p,q}V$, where $V$ is a finite dimensional vector space, will also be defined.

\textbf{Local-to-global part}

The local-to-global part is done in section 5. The original motivation of section 5 is to give a more natural proof of the Lefschetz theorem for mixed K\"ahler classes (the main result of \cite{DN2}). Recall that the classical Lefschetz theorem is proven by representing the cohomology groups $H^{p,q}(X)$ by the spaces of harmonic forms $\H^{p,q}_\omega(X)$. Then the Lefschetz decompositions, the Hodge-Riemann relations, etc., are reduced to local statements. Here we follow the same strategy and prove the existence of harmonic forms with respect to $(\nu,\omega)$, where $\nu$ is a product of K\"ahler forms. In particular we will prove the following equation which relates the local norm $|\cdot|_{(\nu,\omega)}$ with the global norm $\|\cdot\|_{\nu,\omega}$:
\begin{align}\label{global norm = local norm introduction}
    \|a\|_{(\nu,\omega)}^2=\inf_{[\alpha]=a}\int_X|\alpha|_{(\nu,\omega)}^2\frac{\omega^n}{n!}
\end{align}
and the infimum of RHS is achieved by a unique harmonic form $\alpha\in\H^{p,q}_{(\nu,\omega)}(X)$. The harmonic spaces $\H^{p,q}_{(\nu,\omega)}(X)$ are preserved by the operator $L=-\wedge\omega$ and we have the Lefschetz decomposition
\[\mathcal{H}^{p,q}(X)=\bigoplus_{i\geq 0}L^i\mathcal{P}^{p-i,q-i}(X).\]
This would immediately imply the main result of \cite{DN2}. Moreover, one can derive the following proposition which will be used in the proof of theorem \ref{theorem 2}.
\begin{proposition}\label{proposition: pointwise zero}(= corollary \ref{corollary: pointwise zero})
    Let $\omega_1,\cdots,\omega_r$ be K\"ahler forms on a compact K\"ahlerian manifold $X$. Suppose that $a\in H^{p,q}(X)$ satisfies $a\cdot[\omega_1\cdots\omega_r]=0$, then there exists a $d$-closed $\alpha\in\A^{p,q}(X)$ repesenting $a$ such that $\alpha\cdot\omega_1\cdots\omega_r=0$. 
\end{proposition}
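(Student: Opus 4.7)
The plan is to take $\alpha$ to be the $(\nu,\omega)$-harmonic representative of $a$ furnished by the local-to-global theory of section~5, where $\nu=\omega_1\wedge\cdots\wedge\omega_r$ and $\omega$ is any auxiliary K\"ahler form on $X$, and to argue that this distinguished representative automatically satisfies $\alpha\wedge\omega_1\wedge\cdots\wedge\omega_r=0$ pointwise. By construction, $\alpha\in\H^{p,q}_{(\nu,\omega)}(X)\subset\A^{p,q}(X)$ is $d$-closed, represents $a$, and realises the infimum in (\ref{global norm = local norm introduction}):
\[
\|a\|_{(\nu,\omega)}^2 \;=\; \int_X |\alpha|_{(\nu,\omega)}^2\,\frac{\omega^n}{n!}.
\]

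The first step is to show $\int_X|\alpha|_{(\nu,\omega)}^2\,\omega^n = 0$. Using the Lefschetz splitting $\H^{p,q}_{(\nu,\omega)}(X)=\bigoplus_{i\geq 0}L^i\mathcal{P}^{p-i,q-i}_{(\nu,\omega)}(X)$ with $L=-\wedge\omega$, write $\alpha=\sum_i\omega^i\wedge\alpha_i$ with $\alpha_i$ primitive harmonic. Because $\wedge\nu$ commutes with $L$, the hypothesis $a\cdot[\nu]=0$ propagates through the Lefschetz grading and forces $[\alpha_i]\cdot[\nu]=0$ on each primitive component, so that $\alpha_i\wedge\nu$ is $d$-exact. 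On each primitive piece the local Hodge--Riemann identity from section~5 rewrites $\|\alpha_i\|_{(\nu,\omega)}^2$ as a positive multiple of $\int_X\alpha_i\wedge\overline{\alpha_i}\wedge\nu\wedge\omega^{n-p-q+2i-r}$, which vanishes as the pairing of the exact form $\alpha_i\wedge\nu$ with the closed form $\overline{\alpha_i}\wedge\omega^{n-p-q+2i-r}$. Summing the orthogonal primitive pieces gives $\|a\|_{(\nu,\omega)}^2 = 0$, and hence $\int_X|\alpha|_{(\nu,\omega)}^2\,\omega^n=0$. Pointwise non-negativity then forces $|\alpha|_{(\nu,\omega)}\equiv 0$ on $X$.

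The second step converts pointwise vanishing of the local norm into pointwise vanishing of $\alpha\wedge\nu$. The local norm $|\cdot|_{(\nu,\omega)}$ on $\bigwedge^{p,q}V$ is constructed in section~5 from the pointwise mixed Hodge--Riemann form in such a way that, on each primitive summand, its kernel is exactly $\{\beta:\beta\wedge\nu=0\}$. Applied to each $\alpha_i$ this gives $\alpha_i\wedge\nu=0$, and summing yields $\alpha\wedge\nu=\sum_i\omega^i\wedge(\alpha_i\wedge\nu)=0$ pointwise, as required. The main obstacle I expect is precisely this last implication: characterising the kernel of $|\cdot|_{(\nu,\omega)}$ via the local mixed Hodge--Riemann theorem. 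In the classical single-K\"ahler case this is immediate, but for a genuine product of K\"ahler forms it is the heart of the local theory of section~5; the assertion that the pointwise vanishing of a global harmonic norm can be read off fibrewise from the vanishing of $\wedge\nu$ is the nontrivial quantitative content behind this corollary.
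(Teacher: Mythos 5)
Your argument hinges on taking the $(\nu,\omega)$-harmonic representative $\alpha\in\H^{p,q}_{(\nu,\omega)}(X)$ with $\nu=\omega_1\cdots\omega_r$. But that harmonic space, the Lefschetz splitting $\H^{p,q}=\bigoplus_i L^i\mathcal{P}^{p-i,q-i}$, and the metric $\|\cdot\|_{(\nu,\omega)}$ are constructed in section~5 only in the range $p+q\leq n-r$ or $p+q\geq n+r$. In the range $p+q\leq n-r$ the Dinh--Nguyen mixed hard Lefschetz theorem already shows $-\wedge[\nu]:H^{p,q}\to H^{p+r,q+r}$ is injective (since $-\wedge\nu\omega^k$ with $k=n-p-q-r\geq 0$ is an isomorphism), so the hypothesis $a\cdot[\nu]=0$ forces $a=0$ and the proposition is vacuous. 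The nontrivial content lives precisely in the ``dangerous'' range $n-r<p+q<n+r$, where the machinery you invoke simply does not exist. Your proposal never addresses this.

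There is a second, related tell that the argument collapses into the trivial case: $|\cdot|_{(\nu,\omega)}$ is a genuine (positive definite) Hermitian metric on $\bigwedge^{p,q}T^*_xX$, so your step~1 conclusion $|\alpha|_{(\nu,\omega)}\equiv 0$ on $X$ forces $\alpha\equiv 0$ and hence $a=0$ --- not merely $\alpha\wedge\nu=0$. Step~2, which you flag as ``the heart of the local theory,'' is in fact vacuous: the kernel of a norm is zero, not $\{\beta:\beta\wedge\nu=0\}$. (Also, in step~1 the claim that $a\cdot[\nu]=0$ forces each primitive component to satisfy $[\alpha_i]\cdot[\nu]=0$ is not justified by commutativity of $-\wedge\nu$ with $L$; the different summands $[\omega]^i[\nu][\alpha_i]$ need not be independent in $H^{p+r,q+r}$, and in the only range where your decomposition exists this point is moot because $a=0$ anyway.)

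The paper's proof works around the degree obstruction by an induction on $r$: it sets $b=a\cdot[\omega_1\cdots\omega_{r-1}]$, applies the harmonic theory for the \emph{shorter} Hodge--Riemann pair $(\omega_1\cdots\omega_{r-1},\omega_r)$ at the shifted bidegree $(p-k,q-k)$ with $k=p+q+r-n-1$ --- which satisfies $(p-k)+(q-k)\leq n-(r-1)$, hence lies in the permissible range --- to produce a primitive $\beta$ with $\beta\,\omega_1\cdots\omega_{r-1}\omega_r^{k+1}=0$ pointwise, then uses the inductive hypothesis on $a-[\beta\omega_r^k]$ (with only $r-1$ K\"ahler forms). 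If you want to salvage your approach, the key missing idea is precisely this: drop one K\"ahler form from $\nu$ so that the harmonic theory applies at a degree shifted by Lefschetz, rather than working with the full product $\nu$ at bidegree $(p,q)$.
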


\textbf{Local part}

Section 6 is devoted to the proof of the following two local theorems:
\begin{proposition}\label{introduction: local 1}(= proposition \ref{proposition: local 1})
    Let $V$ be an $n$ dimensional $\C$-vector space, $\omega_1,\cdots,\omega_r,\omega$ positive (1,1)-forms on $V$. Let $\nu=\omega_1\cdots\omega_r$. Suppose that $\omega_i\leq N\omega\ (i=1,\cdots,r)$. Then for all $\alpha\in\bigwedge^{p,q}V$ with $p+q\leq n-r$, we have
    \begin{align*}
        |\nu\alpha|_\omega\overset{N}{\lesssim}|\alpha|_{(\nu,\omega)}
    \end{align*}
\end{proposition}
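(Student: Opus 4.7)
\emph{Plan.} The strategy is to express both sides of the inequality using the Hodge--Riemann pairings built into the definition of $|\cdot|_{(\nu,\omega)}$, and then to compare them using the pointwise strongly-positive wedge estimate implied by $\omega_j \leq N\omega$.

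\emph{Step 1 (Lefschetz reduction).} Recall from section 4 that $|\cdot|_{(\nu,\omega)}$ on $\bigwedge^{p,q}V$ (with $p+q\leq n-r$) is built from the mixed Hodge--Riemann pairing on $(\nu,\omega)$-primitive subspaces, extended via the Lefschetz decomposition $\alpha=\sum_{i\geq 0}\omega^i\alpha_i$ with $\alpha_i$ primitive of bidegree $(p-i,q-i)$. The first step is to show that under $\omega_j\leq N\omega$, both $|\alpha|_{(\nu,\omega)}^2$ and $|\nu\alpha|_\omega^2$ decompose as sums over $i$ with cross-terms controlled by $N$. Correspondingly, $\nu\alpha=\sum_i\omega^i\nu\alpha_i$ and its $\omega$-norm can be bounded piece by piece. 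This reduces the problem to the case of $(\nu,\omega)$-primitive $\alpha$.

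\emph{Step 2 (Primitive case).} For primitive $\alpha$ the mixed Hodge--Riemann theorem expresses $|\alpha|_{(\nu,\omega)}^2$ as a positive constant times $c_{p,q}\,\alpha\wedge\bar\alpha\wedge\nu\wedge\omega^{n-p-q-r}/((n-p-q-r)!\,\omega^n/n!)$. Using $\bar\nu=\nu$ (since each $\omega_j$ is real), $|\nu\alpha|_\omega^2$ is bounded, after $\omega$-Lefschetz-decomposing $\nu\alpha$, by a constant times $c_{p+r,q+r}\,\alpha\wedge\bar\alpha\wedge\nu\wedge\nu\wedge\omega^{n-p-q-2r}/((n-p-q-2r)!\,\omega^n/n!)$. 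The comparison then reduces to the pointwise strongly-positive inequality
\[
\nu\wedge\nu\wedge\omega^{n-p-q-2r}\;\leq\;N^r\,\nu\wedge\omega^{n-p-q-r},
\]
which follows by iterating the positivity of $N\omega-\omega_j$: first, one obtains $\nu\leq N^r\omega^r$ as strongly positive $(r,r)$-forms by peeling off $N\omega-\omega_j\geq 0$ one factor at a time, and then wedges with the strongly positive form $\nu\wedge\omega^{n-p-q-2r}$.

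\emph{Main obstacle.} The delicate point is the quantitative Lefschetz reduction in Step 1, together with the rewriting of $|\nu\alpha|_\omega^2$ as a clean wedge expression in Step 2: even when $\alpha$ is $(\nu,\omega)$-primitive, $\nu\alpha$ is generally not $\omega$-primitive, so one must $\omega$-Lefschetz-decompose $\nu\alpha$ and bound its components. This is where the hypothesis $\omega_j\leq N\omega$ plays the decisive role, since it provides uniform control on the commutators of $L=-\wedge\omega$ with multiplication by $\nu$ and on the cross-terms $(\omega^i\alpha_i,\omega^j\alpha_j)_\omega$, yielding all reduction constants in terms of $N$ alone. Once these commutator estimates are in hand, the problem collapses onto the elementary wedge-positivity estimate of Step 2.
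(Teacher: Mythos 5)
Your approach is a genuinely different one from the paper's, but it has two gaps that do not appear to be fixable by the methods you invoke.

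\textbf{First gap: $\nu\alpha$ is not $\omega$-primitive.} You acknowledge that even when $\alpha\in P^{p,q}_{(\nu,\omega)}$, the form $\nu\alpha$ generally fails to be $\omega$-primitive, but the resolution you sketch — ``the hypothesis $\omega_j\leq N\omega$ provides uniform control on the commutators of $L$ with multiplication by $\nu$ and on the cross-terms'' — is an assertion, not an argument, and it is where the whole difficulty of the proposition lives. Indeed, $(\nu,\omega)$-primitivity of $\alpha$ means $\nu\omega^{n-p-q-r+1}\alpha=0$, while $\omega$-primitivity of $\nu\alpha$ would require the strictly stronger $\nu\omega^{n-p-q-2r+1}\alpha=0$, and in the range $p+q\leq n-r$ the map $\omega^r$ is not injective in the relevant degree, so the second condition does not follow from the first. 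As a consequence the quadratic form $c_{p+r,q+r}\int\alpha\wedge\bar\alpha\wedge\nu^2\wedge\omega^{n-p-q-2r}$ equals $\langle\nu\alpha,\nu\alpha\rangle_{\omega^{n-p-q-2r}}$, which decomposes into an \emph{alternating} sum over the $\omega$-Lefschetz components of $\nu\alpha$; it is therefore not an upper bound for $|\nu\alpha|_\omega^2$, and in fact it can be negative.

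\textbf{Second gap: wedging with $\alpha\wedge\bar\alpha$.} Even granting the bound you claim, the reduction to the pointwise inequality $\nu^2\omega^{n-p-q-2r}\leq N^r\nu\omega^{n-p-q-r}$ requires wedging both sides with $c_{p,q}\,\alpha\wedge\bar\alpha$. That step is only legitimate if $\sqrt{-1}^{(p+q)^2}(-1)^q\alpha\wedge\bar\alpha$ is a strongly positive $(p+q,p+q)$-form, which is false in general for $p,q\geq 1$ (consider $\alpha=dz^1\wedge d\bar z^1+dz^2\wedge d\bar z^2$, where this expression is negative). The Hodge--Riemann theorem gives positivity of the \emph{integral} against $\nu\omega^k$ for primitive $\alpha$; it does not give strong positivity of the form $\alpha\wedge\bar\alpha$ itself, which is what wedge monotonicity would need.

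\textbf{What the paper does instead.} The paper deliberately avoids any local Lefschetz-decomposition analysis, precisely because it is so delicate; the author even remarks that finding an elementary linear-algebraic proof would be interesting. Instead, proposition \ref{proposition: local 1} is deduced from the geometric proposition \ref{proposition: local geometric 1}: for a smooth projective $X$ and ample $h_1,\dots,h_r,h$, one reduces (by continuity and rescaling) to the very ample case, represents $v=h_1\cdots h_r$ by a complete intersection $Y$, uses lemma \ref{lemma: restriction identities} to rewrite $\|a\|_{(v,h)}$ as $\|a|_Y\|_h$, and then proves the needed inequality by a duality argument (the supremum characterization of $\|va\|_h$ and $\|a|_Y\|_h$, combined with lemma \ref{lemma: frequently used} to bound $\|b|_Y\|_h$ by $\|b\|_h\sqrt{\mathrm{Vol}_h(Y)}$). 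Finally, the linear-algebraic statement for $(V,\omega)$ is recovered by applying the geometric one to the complex torus $V/(\Z e_1+\cdots+\Z e_n)$ and invoking remark \ref{remark: complex torus = linear}. This route converts the local estimate into a global one, where restriction to hypersurfaces and the ordinary Hodge theory on the subvariety do all the work, sidestepping the commutator issues that block your sketch.
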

\begin{proposition}\label{introduction: local 2}($\subset$ proposition \ref{proposition: local 2})
    Let $V,W$ be $\C$-vector spaces of dimension $n,m$ respectively and $\pi:V\rightarrow W$ a linear map. Let $\omega_1,\cdots,\omega_r$ be positive (1,1)-forms on $W$ and $\omega$ a positive (1,1)-form on $V$. Let $\nu=\pi^*\omega_1\cdots\pi^*\omega_r$. For $\epsilon>0$, define $\nu_{\epsilon}=(\pi^*\omega_1+\epsilon\omega)\cdots(\pi^*\omega_s+\epsilon\omega)$. Let $\alpha\in\bigwedge^{p,q}V$ with $p+q\geq n-r$. Suppose that $\nu\alpha=0$, then
    \begin{align*}
        \lim_{\epsilon\rightarrow0}|\nu_\epsilon\alpha|_{(\nu_\epsilon,\omega)}=0.
    \end{align*}
\end{proposition}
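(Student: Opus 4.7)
The plan is to combine the Taylor expansion of $\nu_\epsilon$ around $\nu$ with the explicit Hodge--Riemann-type formula that will characterize $|\cdot|_{(\nu_\epsilon,\omega)}$. Expanding the product,
\[
\nu_\epsilon = \prod_{i=1}^{r}(\pi^*\omega_i+\epsilon\omega) = \nu + \epsilon\,\omega\,\tau_\epsilon,
\]
where $\tau_\epsilon \in \bigwedge^{r-1,r-1}V$ is polynomial in $\epsilon$, the hypothesis $\nu\alpha=0$ gives
\[
\nu_\epsilon\alpha = \epsilon\,\omega\,\tau_\epsilon\,\alpha,
\]
so $\nu_\epsilon\alpha$ already tends to $0$ to first order in the standard norm $|\cdot|_\omega$. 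The only question is whether the more singular norm $|\cdot|_{(\nu_\epsilon,\omega)}$ could blow up fast enough to counteract this decay.

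To rule that out, I would first treat the critical bidegree $p+q = n-r$. In this case the local analog of the global identity $\|va\|_{(v,w)} = \|a|_Y\|_w$ of lemma \ref{lemma: restriction identities} should yield, for $\omega$-primitive $\alpha$, the closed-form expression
\[
|\nu_\epsilon\alpha|_{(\nu_\epsilon,\omega)}^2\,\frac{\omega^n}{n!} = c_{p,q}\,(\sqrt{-1})^{p-q}(-1)^{q(q-1)/2}\,\alpha\wedge\overline{\alpha}\wedge\nu_\epsilon,
\]
with a combinatorial constant $c_{p,q}$ independent of $\epsilon$. The right side is continuous in $\epsilon$ and equals $c_{p,q}(\sqrt{-1})^{p-q}(-1)^{q(q-1)/2}\,\alpha\wedge\overline{\alpha}\wedge\nu = 0$ at $\epsilon=0$. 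Hence $|\nu_\epsilon\alpha|_{(\nu_\epsilon,\omega)}\to 0$ in the critical, primitive case.

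For general $p+q \geq n-r$ I would pass to the $\omega$-Lefschetz decomposition $\alpha = \sum_k \omega^k\alpha_k$ with $\alpha_k$ primitive of bidegree $(p-k,q-k)$ and use mutual orthogonality of the pieces in the $(\nu_\epsilon,\omega)$-inner product to split the norm into a sum, reducing to the estimate for each $|\nu_\epsilon(\omega^k\alpha_k)|_{(\nu_\epsilon,\omega)}$. Since $[\nu,\omega]=0$, the identity $\nu\alpha=0$ should propagate to each $\omega$-primitive component (after unwinding the $\omega$-Lefschetz decomposition of $\nu\alpha$), reducing each summand to the critical-bidegree computation above.

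The main obstacle is the last descent step: the forms $\nu\alpha_k$ are in general not themselves $\omega$-primitive, so extracting $\nu\alpha_k = 0$ from $\sum_k \omega^k\nu\alpha_k = 0$ is not immediate from uniqueness of the $\omega$-primitive decomposition. I expect to handle this either by an explicit $\mathfrak{sl}_2$-calculation exploiting the commutation relations between multiplication by $\nu$ and the Lefschetz operator $L=\omega\wedge$, or by appealing to the local mixed Hodge--Riemann framework developed in section 5 for the $\nu_\epsilon$-Lefschetz structure, which provides its own generalized primitive decomposition adapted to multiplication by $\nu_\epsilon$. A secondary technical point is verifying that the combinatorial constants behind both the orthogonality and the Hodge--Riemann formula stay uniformly bounded as $\epsilon\to 0$, so that the pointwise convergence of the integrand translates into convergence of the norm.
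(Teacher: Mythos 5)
Your plan does not close, and the paper's own proof goes a completely different route. Two gaps in the proposal are decisive.

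First, the claimed closed-form expression for $|\nu_\epsilon\alpha|_{(\nu_\epsilon,\omega)}^2$ in the critical bidegree $p+q=n-r$ is only valid when $\alpha$ is $(\nu_\epsilon,\omega)$-primitive (i.e.\ $\nu_\epsilon\omega\alpha=0$), not when $\alpha$ is merely $\omega$-primitive (i.e.\ $\omega^{r+1}\alpha=0$). These two notions do not coincide unless $\nu_\epsilon$ happens to be proportional to $\omega^r$. Moreover, the $(\nu_\epsilon,\omega)$-primitive decomposition of a fixed $\alpha$ varies with $\epsilon$, and there is no a priori control on the size of its components as $\epsilon\to 0$: at $\epsilon=0$ the pair $(\nu,\omega)$ fails the hard Lefschetz property whenever $\pi$ is not injective, so the metric $|\cdot|_{(\nu_\epsilon,\omega)}$ genuinely degenerates. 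Continuity of $\alpha\wedge\bar\alpha\wedge\nu_\epsilon$ in $\epsilon$ therefore does not by itself give continuity, or even boundedness, of $|\nu_\epsilon\alpha|_{(\nu_\epsilon,\omega)}$. Your own Step 1 observation that $\nu_\epsilon\alpha=\epsilon\omega\tau_\epsilon\alpha$ decays to first order in $|\cdot|_\omega$ is exactly why the whole question reduces to controlling the rate at which $|\cdot|_{(\nu_\epsilon,\omega)}$ blows up, and that is where the real work lies; the proposal does not engage with this quantitatively, and in particular never uses the structural hypothesis of the full proposition that $\pi^*\omega_i\leq N\omega$, which is what makes the degeneration controllable at all.

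Second, you yourself flag the descent from general $p+q\geq n-r$ to the critical bidegree via the $\omega$-Lefschetz decomposition as unresolved, and it is: $\nu$ does not commute with the $\omega$-Lefschetz projector onto primitives, so $\sum_k\omega^k\nu\alpha_k=0$ does not give $\nu\alpha_k=0$. Neither the $\mathfrak{sl}_2$ nor the ``generalized primitive decomposition'' alternative is carried out. So the proposal is a plan with the two hardest steps left open.

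For comparison: the paper does not attempt a direct linear-algebra argument at all. It reduces the pointwise statement to a statement on a product of complex tori (via Remark \ref{remark: complex torus = linear}) and then invokes Corollary \ref{corollary: perturb nef classes 2}, which in turn rests on Proposition \ref{proposition: perturb nef classes}, the triangle inequality Lemma \ref{lemma: triangle inequality}, the log cut-off Lemma \ref{lemma: log cut-off}, Lemma \ref{lemma: frequently used}, and the quasi-isometry estimates of Lemma \ref{lemma: restriction quasi-isometry 2} -- a chain of reductions through rational approximation, very ample divisors, and simple normal crossing decompositions. Indeed, Section 6 explicitly remarks that it would be interesting to find elementary proofs of these linear-algebraic facts, which is exactly what your plan attempts; but as written it does not supply the missing decay estimate on the degenerating metric, which is the heart of the matter.
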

Assuming these results, we can now sketch the proof of theorem \ref{theorem 2}.

\textbf{Sketch of the proof of theorem \ref{theorem 2}}
We first describe the basic idea. The classes $\pi^*w_1,\cdots,\pi^*w_s$ are not strictly positive, so we cannot apply the techniques developed in section 5 directly. The idea is to perturb $\pi^*w_1,\cdots,\pi^*w_s$ by adding a small positive class. So we will consider $v_\epsilon=(\pi^*w_1+\epsilon w)\cdots(\pi^*w_s+\epsilon w)$ and $u_\epsilon=v_\epsilon\cdot w_{s+1}\cdots w_r$, where $w$ is an arbitrary K\"ahler class on $Y$. Given $a\in H^{p,q}(Y)$, we want to show that $au=0$ implies $av=0$. The proof consists of two steps:
\begin{itemize}
    \item Step 1: $au=0$ implies $\|au_\epsilon\|_{(u_\epsilon,w)}$ small.
    \item Step 2: $\|au_\epsilon\|_{(u_\epsilon,w)}$ small implies $\|av_\epsilon\|_w$ small.
\end{itemize}
Letting $\epsilon\rightarrow 0$ gives $av=0$. These two steps make use of the two local propositions respectively.

Let's describe how these two steps are done. For simplicity we consider the case $r=s+1$ (this is in fact all we need in application to theorem \ref{theorem 3}). Then we can simplify notation by writing $w$ for $w_r$. Take a K\"ahler form $\omega$ on $Y$ representing $w$, and take K\"ahler forms $\omega_1,\cdots,\omega_s$ on $X$ representing $w_1,\cdots,w_s$. Let $\nu=\pi^*\omega_1\cdots\pi^*\omega_s,\ \nu_{\epsilon}=(\pi^*\omega_1+\epsilon\omega)\cdots(\pi^*\omega_s+\epsilon\omega),\ \mu_\epsilon=\nu_{\epsilon}\omega$.

Step 1: Suppose $a\in H^{p,q}(Y)$ satisfies $au=0$. We want to prove that $\lim_{\epsilon\rightarrow0}\|a\cdot u_\epsilon\|_{(u_\epsilon,w)}=0$. Let $b=aw$. Then $bv=0$. We claim that there exits $\beta\in\A^{p+1,q+1}(Y)$ representing $b$ such that $\beta\nu=0$. This is the only place where we have used the assumption that $H^*(Y)$ is a free $H^*(X)$-module. The proof of this claim is a simple application of proposition \ref{proposition: pointwise zero} above.

We have strengthen the identity $bv=0$ to the identity $\beta\nu$ which holds on the level of differential forms. This enables us to `pass to local'. By proposition \ref{introduction: local 2} above  (a local result), we have pointwisely $\lim_{\epsilon\rightarrow0}|\nu_{\epsilon}\beta|_{(\nu_{\epsilon},\omega)}=0$. By a basic property of the metric we have $|\nu_{\epsilon}\beta|_{(\nu_{\epsilon},\omega)}\approx|\nu_{\epsilon}\beta|_{(\mu_\epsilon,\omega)}$. So integrating over $Y$ we get $\lim_{\epsilon\rightarrow0}\|a\cdot u_\epsilon\|_{(u_\epsilon,w)}=0$.

Step 2: By a basic property of the metric we have $\|a\|_{(u_\epsilon,w)}=\|a\cdot u_\epsilon\|_{(u_\epsilon,w)}$, so step 1 implies $\lim_{\epsilon\rightarrow0}\|a\|_{(u_\epsilon,w)}=0$. We represent $a$ by harmonic forms $\alpha_\epsilon\in \H^{p,q}_{(\mu_\epsilon,\omega)}(Y)$. Then $\lim_{\epsilon\rightarrow0}\int_Y|\alpha_\epsilon|^2_{(\mu_\epsilon,\omega)}\omega^n=0$.
This again enables us to `pass to local'. An application of proposition \ref{introduction: local 1} (another local result) soon implies that $\lim_{\epsilon\rightarrow 0}\|a\cdot v_\epsilon\|_w=0$.

\subsection{Notations}
We usually denote by $X$ a compact K\"ahlerian manifold or a smooth projective variety over $\C$ of dimension $n$. For $p,q$ integers, $H^{p,q}(X)=H^{p,q}(X,\C)=\frac{d\text{-closed }C^{\infty}(p,q)\text{-forms}}{d\text{-exact }C^{\infty}(p,q)\text{-forms}}$, $H^{p,p}(X,\R)=H^{p,p}(X)\cap H^{2p}(X,\R)$. $\mathrm{Kah}(X)\subset H^{1,1}(X,\R)$ denotes the K\"ahler cone. If $X$ is projective, $N^1(X)=\mathrm{Pic}(X)/\equiv$ denotes the group of numerical equivalence classes of divisors on $X$, $\mathrm{Amp}(X)\subset N^1(X)_\R$ denotes the ample cone. We consider $N^1(X)_\R$ as a subspace of $H^{1,1}(X,\R)$, so $\mathrm{Amp}(X)=\mathrm{Kah}(X)\cap N^1(X)_\R$. $\A^{p,q}(X)$ denotes the space of $C^\infty\ (p,q)$-forms on $X$. We find it necessary to distinguish a cohomology class and the differential forms representing it. Elements of $H^{p,q}(X)$ are usually denoted by $a,b,v,w$, while elements of $\A^{p,q}(X)$ are usually denoted by Greek letters $\alpha,\beta,\nu,\omega\cdots$. $\omega$ usually mean a K\"ahler form, while $w,h$ usually mean K\"ahler or ample classes. If we want to talk about elements of $\bigwedge^{p,q}V$ where $V$ is a finite dimensional vector space, we prefer to use Greek letters.

Let $E=E_0\left<h\right>$ be an $\R$-twisted vector bundle, where $E_0$ is a vector bundle and $h\in N^1(X)_\R$. For any $h'\in N^1(X)_\R$ we write $E\left<h'\right>$ for the $\R$-twisted vector bundle $E_0\left<h+h'\right>$. $\P(E)$ is the same as $\P(E_0)$ (the bundle of 1-dimensional quotients of $E_0$) but the universal quotient bundle $\O_{\P(E)}(1)$ is equal to $\O_{\P(E_0)}(1)\left<\pi^*h\right>$. If $X\rightarrow Y$ is a morphism, $h\in N^1(Y)$, $E$ an $\R$-twisted vector bundle on $X$, we sometimes abbreviate $E\left<\pi^*h\right>$ to $E\left<h\right>$.

If $Y\subset X$ is a subvariety, $h\in\Amp(X)$, we usually abbreviate $h|_Y$ to $h$. So the ugly notation $\|a|_Y\|_{h|_Y}$ would be replaced by $\|a|_Y\|_h$.

For non-negative real numbers $a,b$, we write $a\lesssim b$ (resp. $a\gtrsim b$) iff there is a constant $C>0$ depending only on the dimension $n$ of $X$ such that $a\leq Cb$ (resp. $Ca\geq b$). We write $a\approx b$ iff $a\lesssim b$ and $a\gtrsim b$. Let $N\geq 2$ be a real number, We write $a\overset{N}{\lesssim}b$ (resp. $a\overset{N}{\gtrsim}b$) iff there is a constant $C>0$ depending only on the dimension $n$ such that $a\leq N^Cb$ (resp. $a\geq N^{-C}b$). We write $a\overset{N}{\approx}b$ iff $a\overset{N}{\lesssim}b$ and $a\overset{N}{\gtrsim}b$.

\subsection*{Acknowledgments}
We would like to thank professor Weizhe Zheng, who initiated the research and taught us much about the Hodge-Riemann relations, ample vector bundles, etc. In fact both theorem \ref{theorem 1} and \ref{theorem 3} were conjectured by him. We would like to thank Enhan Li, Haofeng Zhang for many enlightening discussions during a summer school.

\section{Preliminaries on metrics}
In this section we summarize some known results in the form that we are going to use, and prove some simple lemmas for later reference.

Let $X$ be a compact K\"ahlerian manifold of dimension $n$, $\omega$ a K\"ahler form on $X$. The metric on $\bigwedge^{p,q}T^*X$ induced by $\omega$ is defined as follow. For every $x\in X$ we can choose a local frame $dz^1,\cdots,dz^n$ so that $\omega_x=\sqrt{-1}\sum_{i=1}^n dz^i\wedge d\bar{z}^i\big|_x$. If $\alpha_x\in\bigwedge^{p,q}T^*_xX$, we can write $\alpha_x=\sum_{I,J}\alpha_{IJ}dz^I\wedge d\bar{z}^J\big|_x$ and define $|\alpha_x|_{\omega}^2=\sum_{I,J}|\alpha_{IJx}|^2$. For $\alpha\in\mathcal{A}^{p,q}(X)$ we define $\|\alpha\|_\omega^2:=\int_X|\alpha|_\omega^2\frac{\omega^n}{n!}$.

Every class $[\alpha]\in H^{p,q}(X)=\frac{d\text{-closed }(p,q)\text{-forms}}{d\text{-exact }(p,q)\text{-forms}}$ contains a unique minimizer of the norm $\|\alpha\|_\omega^2$, denoted by $\alpha_H$. These are called harmonic forms. The space of harmonic forms is denoted by $\mathcal{H}^{p,q}(X)\subset\mathcal{A}^{p,q}(X)$. We have $\mathcal{H}^{p,q}(X)\cong H^{p,q}(X)$. We put the norm $\|\cdot\|_\omega$ on $H^{p,q}(X)$ via this isomorphism, i.e., $\|[\alpha]\|_\omega:=\|\alpha_H\|_\omega$.

We explain why the metric $\|\cdot\|_\omega^2$ on $H^{*,*}(X)$ depends only on the cohomology ring.

\textbf{Computing the metric purely cohomologically.}

Denote the operator $-\wedge\omega:H^{p,q}(X)\rightarrow H^{p+1,q+1}(X)$ by $L$. For $p+q\leq n$, let $k=n-p-q$, the hard-Lefschetz theorem states that
\begin{align}\label{Lefschetz isomorphism}
    L^k:H^{p,q}(X)\rightarrow H^{n-q,n-p}(X)
\end{align}
is an isomorphism. Define $P^{p,q}(X)=\ker\left(L^{k+1}:H^{p,q}(X)\rightarrow H^{n-q+1,n-p+1}(X)\right)$. Then we have the Lefschetz decomposition
\[H^{p,q}(X)=\bigoplus_{i=0}^{\min(p,q)}L^i P^{p-i,q-i}(X),\quad(p+q
\leq n)\]
which is an orthogonal decomposition. Consider the following Hermitian form on $H^{p,q}(X)$:
\[\left<a,b\right>_{\omega^k}:=\sqrt{-1}^{(p+q)^2}(-1)^q\int_Xa\bar{b}\cdot\omega^k.\]
The metric $\|\cdot\|_{\omega}$ can be computed as follow. If $p+q\leq n$, denote $k=n-p-q$. Every $a\in H^{p,q}(X)$ can be written uniquely as $\alpha=\sum_{i=0}^{\min(p,q)}[\omega]^ia_i$ with $a_i\in P^{p-i,q-i}(X)$, then
\begin{align}\label{compute the metric by Q1}
    \|a\|_\omega^2=\sum_{i=0}^{\min(p,q)}\frac{i!}{(k+i)!}\left<a_i,a_i\right>_{\omega^{k+2i}}.
\end{align}
If $a\in H^{n-q,n-p}$ with $p,q\geq0,\ p+q\leq n$, still let $k=n-p-q$. Using the decomposition $H^{n-q,n-p}(X)=\bigoplus_{i=0}^{\min(p,q)}L^{k+i} P^{p-i,q-i}(X)$, we can write $a$ uniquely as $a=\sum_{i=0}^{\min(p,q)}[\omega]^{k+i}a_i$ with $a_i\in P^{p-i,q-i}(X)$, then
\begin{align}\label{compute the metric by Q2}
    \|a\|_\omega^2=\sum_{i=0}^{\min(p,q)}\frac{(k+i)!}{i!}\left<a_i,a_i\right>_{\omega^{k+2i}}.
\end{align}
 In particular, we see that the isomorphism (\ref{Lefschetz isomorphism}) is a quasi-isometry, in the sense of the following equation:
\begin{align}\label{isometry0}
    \|\alpha\|_{\omega}\approx\|\omega^k\alpha\|_{\omega},\quad(\forall\alpha\in H^{p,q}(X),\ p+q+k=n).
\end{align}

\textbf{Restriction to hyperplane sections}

\begin{lemma}\label{lemma: restriction quasi-isometry 1}
     Let $X$ be a compact K\"ahler manifold of dimension $n$, $h\in \Kah(X)$. Then
     \begin{align}\label{isometry1}
         \|a\|_h\approx\|ha\|_h\quad(\forall a\in H^{p,q}(X),\ p+q\leq n-1).
     \end{align}
     If $h$ is very ample and $Y$ is a smooth divisor representing $h$, then
     \begin{align}\label{isometry2}
         \|ha\|_h\approx\|a|_Y\|_{h}\quad(\forall a\in H^{p,q}(X),\forall p,q\geq 0).
     \end{align}
\end{lemma}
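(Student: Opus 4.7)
The plan is to derive both parts from the explicit formulas (\ref{compute the metric by Q1}) and (\ref{compute the metric by Q2}), supplemented in (2) by the projection formula $\int_X\eta\cdot h=\int_Y\eta|_Y$ for $\eta\in H^{n-1,n-1}(X,\C)$. For (1), the first step is to decompose $a\in H^{p,q}(X)$ via the Lefschetz decomposition:
\[a=\sum_{i=0}^{\min(p,q)}[h]^i a_i,\quad a_i\in P^{p-i,q-i}(X).\]
The key observation is that the primitivity condition $[h]^{n-(p-i)-(q-i)+1}a_i=0$ is invariant under the shift $(p-i,q-i)\mapsto((p+1)-(i+1),(q+1)-(i+1))$, so $ha=\sum_i[h]^{i+1}a_i$ is already the Lefschetz decomposition of $ha\in H^{p+1,q+1}(X)$. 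Applying (\ref{compute the metric by Q1}) to $\|a\|_h^2$ and either (\ref{compute the metric by Q1}) or (\ref{compute the metric by Q2}) to $\|ha\|_h^2$ (depending on whether $p+q\leq n-2$ or $p+q=n-1$), both norms become linear combinations of the same pairings $\left<a_i,a_i\right>_{h^{k+2i}}$ with $k=n-p-q$. The ratios of the coefficients are $(i+1)(k+i)$ in the first case and $(1+i)^2$ in the second, both uniformly bounded by $(n+1)^2$ and bounded below by $1$ (using $k\geq 1$), yielding $\|a\|_h\approx\|ha\|_h$.

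For (2), the projection formula applied with $\eta=a\bar{b}h^{k-1}$ yields the key identity
\[\left<a,b\right>_{h^k,X}=\left<a|_Y,b|_Y\right>_{h^{k-1},Y}\]
for $a,b\in H^{p,q}(X)$ with $k=n-p-q$. Specializing to $a_i\in P^{p-i,q-i}(X)$, with $k$ replaced by $k+2i$, gives $\left<a_i,a_i\right>_{h^{k+2i},X}=\left<a_i|_Y,a_i|_Y\right>_{h^{k-1+2i},Y}$. Substituting into the formula for $\|ha\|_h^2$ from (1) expresses $\|ha\|_{h,X}^2$ as a linear combination, with bounded coefficients, of pairings on $Y$.

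To finish (2), I would compare this expression to $\|a|_Y\|_{h,Y}^2$ computed via (\ref{compute the metric by Q1}) or (\ref{compute the metric by Q2}) on $Y$. The decomposition $a|_Y=\sum_i[h]^i(a_i|_Y)$ is not in general the Lefschetz decomposition on $Y$, since $a_i|_Y$ need not be primitive on $Y$; to handle this, further decompose each $a_i|_Y$ via the Lefschetz decomposition on $Y$, producing a triangular base-change between the ``restricted'' decomposition and the true one. Combining orthogonality of Lefschetz decompositions on $Y$ with iterated application of (1) on $Y$ (valid since $\dim Y=n-1$), the entries of this base-change are $n$-polynomially controlled, yielding $\|a|_Y\|_{h,Y}\approx\|ha\|_{h,X}$.

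The main obstacle is this last base-change step in (2): although each individual adjustment is controlled by (1) applied on $Y$, ensuring that the combined base-change matrix (containing up to $n$ triangular blocks) is uniformly bounded requires careful bookkeeping of the primitive decompositions at each stage.
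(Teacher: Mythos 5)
Your treatment of \eqref{isometry1} is correct and matches the paper's (unstated but implied) computation: the Lefschetz decomposition of $a$ pushes forward under $-\wedge h$ to the Lefschetz decomposition of $ha$, and comparing the coefficients of \eqref{compute the metric by Q1} and \eqref{compute the metric by Q2} gives the ratios $(i+1)(k+i)$ or $(i+1)^2$, which are controlled using $1\leq k$ and $i\leq n/2$.

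For \eqref{isometry2}, however, there is a genuine gap, and it is exactly at the point you flagged as ``the main obstacle.'' You assert that ``the decomposition $a|_Y=\sum_i[h]^i(a_i|_Y)$ is not in general the Lefschetz decomposition on $Y$, since $a_i|_Y$ need not be primitive on $Y$,'' and then try to repair this with a triangular base-change that you cannot control. But the assertion is false: for $p+q\leq n-1$, each $a_i|_Y$ \emph{is} primitive on $Y$. Indeed, write $k=n-p-q\geq 1$; primitivity of $a_i|_Y$ on $Y$ means $h^{k+2i}(a_i|_Y)=0$ in $H^*(Y)$, equivalently $(h^{k+2i}a_i)|_Y=0$. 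The class $h^{k+2i}a_i$ lives in total degree $n+k+2i>n$, where the restriction map $H^j(X)\to H^j(Y)$ has kernel exactly $\ker(-\wedge h)$ (by the Gysin isomorphism $H^j(Y)\xrightarrow{\cong} H^{j+2}(X)$ for $j\geq n$, or equivalently, as the paper says, because $P^{p,q}$ is the $\left<\cdot,\cdot\right>_{h^k}$-orthogonal complement of $hH^{p-1,q-1}$, and the projection formula identifies the two pairings). Since $a_i\in P^{p-i,q-i}(X)$ gives $h^{k+2i+1}a_i=0$, we conclude $(h^{k+2i}a_i)|_Y=0$. Once you know restriction sends $P^{p-i,q-i}(X)$ into $P^{p-i,q-i}(Y)$, the base-change matrix you were worried about is simply the identity, and the rest of your computation closes immediately: applying \eqref{compute the metric by Q1} on $Y$ with $k'=k-1$ and using your key identity $\left<a_i,a_i\right>_{h^{k+2i},X}=\left<a_i|_Y,a_i|_Y\right>_{h^{k-1+2i},Y}$ gives $\|a|_Y\|_{h,Y}^2=\sum_i\frac{i!}{(k-1+i)!}\left<a_i,a_i\right>_{h^{k+2i},X}$, which is comparable (ratio $k+i\leq n$) to $\|a\|_{h,X}^2$; combining with \eqref{isometry1} handles $p+q\leq n-1$, and the case $p+q\geq n$ follows from \eqref{isometry0}. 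So the missing idea is precisely the compatibility of primitivity with restriction, and with it your proof reduces to the paper's.
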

\begin{proof}
    (\ref{isometry1}) is obvious by the formulas (\ref{compute the metric by Q1}) and (\ref{compute the metric by Q2}). The proof of (\ref{isometry2}) is based on the following observation:
    \[\int_Xah=\int_Ya\]
    for all $a\in H^{n-1,n-1}(X)$. As a consequence,
    \[\left<a,b\right>_{h^k}=\left<a|_Y,b|_Y\right>_{h^{k-1}}\]
    for all $a,b\in H^{p,q}(X)$ with $p+q\leq n-1,\ k=n-p-q$. We see that the natural restriction map $H^{p,q}(X)\rightarrow H^{p,q}(Y)$ takes $P^{p,q}(X)$ isomorphically on to $P^{p,q}(Y)$ if $p+q\leq n-2$ and takes $P^{p,q}(X)$ into a subspace of $P^{p,q}(Y)$ if $p+q=n-1$ (since $P^{p,q}$ is nothing but the orthogonal complement of $h H^{p-1,q-1}$ in $H^{p,q}$ with respect to $\left<-,-\right>$). In particular the natural restriction map $H^{p,q}(X)\rightarrow H^{p,q}(Y)$ preserves the Lefschetz decomposition if $p+q\leq n-1$. So by the formula (\ref{compute the metric by Q1}) we have
    \begin{align*}
        \|a\|_h\approx\|a|_Y\|_h
    \end{align*}
    for $a\in H^{p,q}(X)$ with $p+q\leq n-1$. Together with (\ref{isometry1}) this implies (\ref{isometry2}) in the case $p+q\leq n-1$, while the case $p+q\geq n$ of (\ref{isometry2}) follows from this and equation (\ref{isometry0}).
\end{proof}

\textbf{Rescaling}
\begin{align}\label{rescaling}
    \|\cdot\|_{\lambda\omega}=\lambda^{\frac{n-p-q}{2}}\|\cdot\|_\omega
\end{align}

\textbf{Comparing two metrics}

Suppose that $H_1,H_2$ are two Hermitian metrics on a finite dimenisional $\C$-vector space $V$ satisfying $H_1\geq H_2$, then for any $\alpha\in\bigwedge^{p,q}V$ we have
\begin{align}\label{compare3}
    \|\alpha\|_{H_1}\leq\|\alpha\|_{H_2}.
\end{align}
More generally, if $V_1,V_2$ are two finite dimensional $\C$-vector spaces and $f:V_1\rightarrow V_2$ is a linear map, $H_1$ and $H_2$ are Hermitian metrics on $V_1,V_2$ respectively satisfying $H_1\geq f^*H_2$. Then for all $\alpha\in\bigwedge^{p,q}V_2$ we have
\begin{align}\label{compare4}
    \|f^*\alpha\|_{H_1}\leq\|\alpha\|_{H_2}
\end{align}
 Suppose that $\omega_1,\omega_2$ are two K\"ahler forms satisfying $N^{-1}\omega_1\leq \omega_2\leq N\omega_1$ for some $N>1$ then
 \begin{align}\label{compare1}
     \|\alpha\|_{\omega_1}\overset{N}{\approx}\|\alpha\|_{\omega_2}\quad(\forall\alpha\in H^{p,q}(X),\forall p,q\geq 0).
 \end{align}
 (Indeed, the assumption $N^{-1}\omega_1\leq\omega_2$ implies the pointwise inequality $N^{p+q}|\alpha|_{\omega_1}^2\geq|\alpha|^2_{\omega_2}$, while $\omega_2\leq N\omega_1$ gives $\omega_2^n\leq N^n\omega_1^n$. Integrating over $X$ gives $\|\alpha\|_{\omega_2}\leq N^{(p+q+n)/2}\|\alpha\|_{\omega_1}.$)
 
If $h_1,h_2\in\mathrm{Kah}(X)$ satisfies $Nh_1-h_2,Nh_2-h_1\in\mathrm{Kah}(X)$, then we can choose K\"ahler forms $\tau_1,\tau_2$ such that $[\tau_1]=Nh_1-h_2,\ [\tau_2]=Nh_2-h_1$. Put $\omega_1=\frac{N\tau_1+\tau_2}{N^2-1},\ \omega_2=\frac{N\tau_2+\tau_1}{N^2-1}$, we see that $[\omega_1]=h_1,\ [\omega_2]=h_2$ and $N^{-1}\omega_1\leq\omega_2\leq N\omega_1$. So applying the previous discussion we get
\begin{align}\label{compare2}
    \|\alpha\|_{h_1}\overset{N}{\approx}\|\alpha\|_{h_2}\quad(\forall\alpha\in H^{p,q}(X),\forall p,q\geq 0).
\end{align}

\textbf{Two simple inequalities}

Let $\alpha\in\A^{p,q}(X)$, then
\begin{align}\label{eq30}
    \|\alpha\|_{\omega}^2\leq\mathrm{Vol}_{\omega}(X)\cdot\sup_X|\alpha|_{\omega}^2
\end{align}
where $\mathrm{Vol}_{\omega}(X)=\int_X\frac{\omega^n}{n!}$.

Let $a\in H^{p,q}(X),\ b\in H^{n-p.n-q}(X)$, then
\begin{align}\label{eq300}
    \|a\|_{\omega}\cdot\|b\|_{\omega}\geq\left|\int_Xab\right|.
\end{align}
(\ref{eq30}) is trivial. To prove (\ref{eq300}), take harmonic representatives $\alpha,\beta$ of $a,b$ respectively, then
\begin{align*}
    \begin{split}
        \left|\int_Xab\right|=\left|\int_X\alpha\wedge\beta\right|\leq\int_X|\alpha\wedge\beta|_\omega\frac{\omega^n}{n!}\leq&\int_X|\alpha|_\omega|\beta|_\omega\frac{\omega^n}{n!}\\\leq\|\alpha\|_\omega\|\beta\|_\omega=\|a\|_{\omega}\|b\|_{\omega},
    \end{split}
\end{align*}
which proves (\ref{eq300}).

We end this section by a lemma that is frequently used in this paper.
\begin{lemma}\label{lemma: frequently used}
    Let $X$ be a compact K\"ahlerian manifold and $Y\subset X$ a smooth submanifold. Let $w\in\Kah(X)$. Then
    \[\|a|_Y\|_w^2\leq C\|a\|_w^2\vol_w(Y)\]
    for all $a\in H^*(X)$, where $C$ is a constant depending only on $X$ and $w$.
\end{lemma}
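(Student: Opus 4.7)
The plan is to combine three standard ingredients: passage to a harmonic representative, a pointwise comparison of norms under restriction to a submanifold, and the equivalence of $L^\infty$ and $L^2$ norms on the finite-dimensional space of harmonic forms.

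By decomposing $a$ into its $(p,q)$-components and taking the maximum of the resulting constants over the finitely many bigradings, it suffices to fix $a\in H^{p,q}(X)$. Fix a K\"ahler form $\omega$ on $X$ representing $w$, and let $\alpha\in\mathcal{H}^{p,q}(X)$ be the $\omega$-harmonic representative of $a$, so that $\|a\|_w^2=\int_X|\alpha|_\omega^2\,\omega^n/n!$. The restriction $\alpha|_Y$ is $d$-closed and represents $a|_Y$, so since the harmonic representative minimizes the $L^2$ norm,
\[
\|a|_Y\|_w^2\;\leq\;\|\alpha|_Y\|_{\omega|_Y}^2\;=\;\int_Y |\alpha|_Y|_{\omega|_Y}^2\,\frac{(\omega|_Y)^{\dim Y}}{(\dim Y)!}.
\]

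The pointwise comparison $|\alpha|_Y|_{\omega|_Y}\leq|\alpha|_\omega$ on $Y$ follows from choosing, at each $y\in Y$, a unitary frame $dz^1,\dots,dz^n$ of $T^*_yX$ such that $dz^1,\dots,dz^{\dim Y}$ descend to a unitary frame of $T^*_yY$ (with respect to the induced Hermitian metric). Writing $\alpha=\sum_{I,J}\alpha_{IJ}dz^I\wedge d\bar z^J$, restriction kills every term involving an index $>\dim Y$, so $|\alpha|_Y|_{\omega|_Y}^2=\sum_{I,J\subset\{1,\dots,\dim Y\}}|\alpha_{IJ}|^2\leq\sum_{I,J}|\alpha_{IJ}|^2=|\alpha|_\omega^2$. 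Plugging this in,
\[
\|a|_Y\|_w^2\;\leq\;\Bigl(\sup_X|\alpha|_\omega^2\Bigr)\vol_w(Y).
\]

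It remains to bound $\sup_X|\alpha|_\omega$ by $\|\alpha\|_\omega=\|a\|_w$. This is where finite dimensionality enters: $\mathcal{H}^{p,q}(X)\cong H^{p,q}(X)$ is finite-dimensional, and on this space both $\sup_X|\cdot|_\omega$ and $\|\cdot\|_\omega$ are norms (the first is well defined because harmonic forms are smooth, and both vanish only on the zero form). All norms on a finite-dimensional vector space are equivalent, giving a constant $C_{p,q}=C_{p,q}(X,\omega)$ with $\sup_X|\alpha|_\omega\leq C_{p,q}\|\alpha\|_\omega$. Taking $C$ to be the maximum of $C_{p,q}^2$ over all $0\leq p,q\leq n$ finishes the proof; that $C$ depends only on $X$ and $w$ (not on the particular K\"ahler form chosen) follows from the norm comparison (\ref{compare1}) between any two K\"ahler representatives of $w$.

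There is essentially no serious obstacle here: the argument is a packaging of standard facts. The only point worth being careful about is the pointwise assertion that restriction is norm-decreasing on $(p,q)$-forms, which is why I would spell out the adapted-frame calculation explicitly.
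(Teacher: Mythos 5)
Your proof is correct and follows essentially the same route as the paper's: pass to the harmonic representative, bound the restricted $L^2$ norm by $(\sup_X|\alpha|_\omega^2)\vol_\omega(Y)$, and use equivalence of norms on the finite-dimensional harmonic space. The paper compresses the middle step into a single invocation of the analogue of (\ref{eq30}) on $Y$, which tacitly uses the pointwise inequality $|\alpha|_Y|_{\omega|_Y}\leq|\alpha|_\omega$ that you spell out with the adapted-frame computation; making this explicit is reasonable and the argument is sound.
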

\begin{proof}
    Take a K\"ahler metric $\omega$ on $X$ representing $w$. Let $\alpha\in\H^*_\omega(X)$ be the harmonic form representing $a$. Then
    \[\|a|_Y\|_w^2\leq\|\alpha|_Y\|_\omega^2\leq\sup_X|\alpha|_\omega^2\vol_\omega(Y).\]
    Since $\H^*_\omega(X)$ is finite dimensional, any two norms on a finite dimensional $\C$-vector space are equivalent. So there exists $C>0$ depending only on $X,\omega$ such that $\sup_X|\alpha|_\omega\leq C\|\alpha\|_\omega$. This proves the lemma.
\end{proof}

\section{Proof of theorem \ref{theorem 1}}
We first prove proposition \ref{proposition 1}
\begin{proof}(of proposition \ref{proposition 1})
    Let $\alpha\in\H_{\omega_0}^{p,q}(X)$ be an arbitrary harmonic $(p,q)$-form. We need to prove that
    \begin{align}\label{eq34}
        \|c_e\cdot[\alpha]\|_{\omega_0}\geq c\|\alpha\|_{\omega_0}
    \end{align}
    for some constant $c>0$ to be determined. Since $N^1(X)_{\Q}$ is dense in $N^1(X)_\R$, in order to prove (\ref{eq34}), by continuity we may assume that $E_1,\cdots,E_k$ are $\Q$-twisted. Then it is well known that there exists a finite dominant morphism $\pi:Y\rightarrow X$, with $Y$ smooth, such that $\pi^*E_1,\cdots,\pi^*E_k$ are vector bundles (c.f.\cite{BG} lemma 2.1). Let $d=\deg(Y/X)$. Let $E=\pi^*E_1\oplus\cdots\oplus\pi^*E_k,\ e=e_1+\cdots+e_k$. Use the notations $P,\varphi,\tilde{\pi}$ indicated in the following diagram:
\[\begin{tikzcd}
	{P=\mathbb{P}(E)} & Y \\
	& X
	\arrow["\varphi", from=1-1, to=1-2]
	\arrow["{\tilde{\pi}}"', from=1-1, to=2-2]
	\arrow["\pi", from=1-2, to=2-2]
\end{tikzcd}\]
As indicated in the introduction, the idea is to make the proof of the Bloch-Giesecker's theorem quantitative (applied to the vector bundle $E\left<\omega_0\right>$). Recall that the Bloch-Giesecker's theorem is proven by applying the hard Lefschetz theorem to $\O_{\P(E\left<\omega_0\right>)}(1)$ on $P$. Let $u=c_1(\O_{\P(E)}(1))+\tilde{\pi}^*[\omega_0](=c_1(\O_{\P(E\left<\omega_0\right>)}(1)))$. Then
\begin{align}\label{eq32}
    u^e-\varphi^*c_1(E\left<\omega_0\right>)u^{e-1}+\cdots+(-1)^e\varphi^*c_e(E\left<\omega_0\right>)=0.
\end{align}
Recalling the definition of $c_e$ from the statement of the proposition, we get
\begin{align}\label{eq33}
    \tilde{\pi}^*c_e=\varphi^*c_e(E\left<\omega_0\right>)=v\cdot u,
\end{align}
where we have used (\ref{eq32}) in the second equality, and
\[v:=(-1)^{e-1}u^{e-1}+(-1)^{e-2}\varphi^*c_1(E\left<\omega_0\right>)u^{e-2}+\cdots+\varphi^*c_{e-1}(E\left<\omega_0\right>).\]
So
\begin{align}\label{eq35}
    \tilde{\pi}^*[\alpha]\cdot v\cdot u=\tilde{\pi}^*([\alpha]\cdot c_e).
\end{align}
In order to make the hard Lefschetz theorem quantitative, we need to choose a K\"ahler metric on $P$ representing $u$. Let $\omega$ be a K\"ahler form on $P$ representing $c_1(\O_{\P(E)}(1))$. Then $\tilde{\omega}:=\omega+\tilde{\pi}^*\omega_0$ would be a K\"ahler form on $P$ representing $u$. By (\ref{isometry0}) (which says that the Lefschetz operator $-\wedge u=-\wedge\tilde{\omega}$ is a quasi-isometry) and (\ref{eq35}) above we get
\begin{align}\label{eq36}
    \|\tilde{\pi}^*[\alpha]\cdot v\|_{\tilde{\omega}}\approx\|\tilde{\pi}^*([\alpha]\cdot c_e)\|_{\tilde{\omega}}
\end{align}
Recall that our goal is to bound $\|[\alpha]\cdot c_e\|_{\omega_0}$ from below. So in light of (\ref{eq36}), we need to bound $\|\tilde{\pi}^*[\alpha]\cdot v\|_{\tilde{\omega}}$ from below. The trick is to find a differential form $\beta$ with norm bounded from above so that $\int_P\tilde{\pi}^*\alpha\cdot v\cdot\beta$ is bounded from below. We take
\[\beta=*\alpha\in\H_{\omega_0}^{n-q,n-p}(X).\]
Then
\begin{align*}
    \int_X\alpha\wedge\bar{\beta}=\|\alpha\|_{\omega_0}^2.
\end{align*}
\begin{align*}
    (-1)^{e-1}\int_P\tilde{\pi}^*\alpha\cdot\tilde{\pi}^*\bar{\beta}\cdot v=\int_Y\pi^*\alpha\cdot\pi^*\bar{\beta}=d\int_X\alpha\cdot\bar{\beta}=d\|\alpha\|_{\omega_0}^2.
\end{align*}
So by (\ref{eq300}),
\begin{align}\label{eq37}
    \|\tilde{\pi}^*[\alpha]\cdot v\|_{\tilde{\omega}}\cdot\|[\beta]\|_{\tilde{\omega}}\geq\left|\int_P\tilde{\pi}^*\alpha\cdot v\cdot\bar{\beta}\right|=d\|\alpha\|_{\omega_0}^2.
\end{align}
Combining (\ref{eq36}) and (\ref{eq37}) we get
\begin{align}\label{eq38}
    \|\tilde{\pi}^*([\alpha]\cdot c_e)\|_{\tilde{\omega}}\cdot\|\tilde{\pi}^*\beta\|_{\tilde{\omega}}\geq d\|\alpha\|_{\omega_0}^2.
\end{align}
By (\ref{eq30}), this implies
\begin{align}\label{eq39}
    \mathrm{Vol}_{\tilde{\omega}}(P)\cdot\sup_P|\tilde{\pi}^*(([\alpha]\cdot c_e)_H)|_{\tilde{\omega}}\cdot\sup_P|\tilde{\pi}^*\beta|_{\tilde{\omega}}\geq d\|\alpha\|_{\omega_0}^2,
\end{align}
where we have used the simple observation that $\tilde{\pi}^*(([\alpha]\cdot c_e)_H)$ is a form representing the class $\tilde{\pi}^*([\alpha]\cdot c_e)$, so $\|\tilde{\pi}^*(([\alpha]\cdot c_e)_H)\|_{\tilde{\omega}}\geq\|\tilde{\pi}^*([\alpha]\cdot c_e)\|_{\tilde{\omega}}$.

Now by (\ref{compare4}), since $\tilde{\omega}\geq\tilde{\pi}^*\omega_0$, we get
\[\sup_P|\tilde{\pi}^*(([\alpha]\cdot c_e)_H)|_{\tilde{\omega}}\leq\sup_X|([\alpha]\cdot c_e)_H|_{\omega_0},\quad\sup_P|\tilde{\pi}^*\beta|_{\tilde{\omega}}\leq\sup_X|\beta|_{\omega_0}.\]
Plug them into (\ref{eq39}) we get
\begin{align}\label{eq310}
    \mathrm{Vol}_{\tilde{\omega}}(P)\cdot\sup_X|([\alpha]\cdot c_e)_H|_{\omega_0}\cdot\sup_X|\beta|_{\omega_0}\geq d\|\alpha\|_{\omega_0}^2.
\end{align}

The next key observation is that $\mathrm{Vol}_{\tilde{\omega}}(P)$ is purely topological (independent of the choice of $\omega$). We compute it as follow:
\begin{align*}
    (n+e-1)!\cdot\mathrm{Vol}_{\tilde{\omega}}(P)=&\int_P\tilde{\omega}^{n+e-1}\\
    =&\int_Pc_1\left(\O_{\P(E\left<\omega_0\right>)}(1)\right)^{n+e-1}\\
    =&\int_Ys_n(c(E\left<\omega_0\right>))\\
    =&d\int_Xs_n(c(E_1\left<\omega_0\right>)\cdots c(E_k\left<\omega_0\right>))
\end{align*}
where $s_n$ is a polynomial of the Chern classes depending only on $n,e$. So we get
\begin{align}
    \mathrm{Vol}_{\tilde{\omega}}(P)\leq d\cdot C(M,\omega_0)
\end{align}
where $C(M,\omega_0)$ is a constant depending only on $M,X,\omega_0$. Plug this into (\ref{eq310}), we get
\begin{align}\label{eq311}
    C(M,\omega_0)\cdot\sup_X|([\alpha]\cdot c_e)_H|_{\omega_0}\cdot\sup_X|\beta|_{\omega_0}\geq \|\alpha\|_{\omega_0}^2.
\end{align}
Since $\H^{n-q,n-p}_{\omega_0}(X)$ is finite dimensional, and any two norms on a finite dimensional $\C$-vector space are equivalent, there exists a constant $C_0>0$ depending only on $X$ and $\omega_0$ such that $\sup_{X}|\eta|_{\omega_0}\leq C_0\|\eta\|_{\omega_0}$ for all $\eta\in\H^{n-q,n-p}_{\omega_0}(X)$. So (\ref{eq311}) implies
\begin{align}
    C(M,\omega_0)C_0^2\|[\alpha]\cdot c_e\|_{\omega_0}\|\beta\|_{\omega_0}\geq \|\alpha\|_{\omega_0}^2.
\end{align}
Note that $\|\beta\|_{\omega_0}=\|*\alpha\|_{\omega_0}=\|\alpha\|_{\omega_0}$. So we get
\begin{align}
    C(M,\omega_0)C_0^2\|[\alpha]\cdot c_e\|_{\omega_0}\geq \|\alpha\|_{\omega_0}.
\end{align}
This proves (\ref{eq34}) with $c=C(M,\omega_0)^{-1}C_0^{-2}$.
\end{proof}
Now we prove theorem \ref{theorem 1 HR} (which includes theorem \ref{theorem 1})
\begin{proof}(of theorem \ref{theorem 1 HR})
    The hard Lefschetz property is an immediate consiquence of proposition \ref{proposition 1}. To prove the Hodge-Riemann relation, for every $t\geq 0$ consider the following Hermitian form on $H^{p,q}(X)$:
    \[\left<a,b\right>_t=(-1)^q\sqrt{-1}^{(p+q)^2}\int_Xa\bar{b}\cdot c_{e_1}(E_1\left<th\right>)\cdots c_{e_k}(E_k\left<th\right>).\]
    The hard Lefschetz property applied to the $\R$-twisted vector bundles $E_1\left<th\right>,\cdots,E_k\left<th\right>$ (together with Serre duality) implies that $\left<-,-\right>_t$ is nondegenerate for every $t\geq 0$. In addition, the hard Lefschetz property applied to the $\R$-twisted vector bundles $E_1\left<th\right>,\cdots,E_k\left<th\right>,h,h$ implies that $\left<-,-\right>_t$ is also nondegenerate on the subspace $hH^{p-1,q-1}(X)$. By continuity the index of these Hermitian forms on $H^{p,q}(X)$ and on the subspace $hH^{p-1,q-1}(X)$ are independent of $t$. So their index on the orthogonal complement of $hH^{p-1,q-1}(X)$ is also independent of $t$. Letting $t\rightarrow\infty$ we see from the classical Hodge Riemann relation that the Hermitian forms are positive definite on the orthogonal complement of $hH^{p-1,q-1}(X)$ for large $t$. So for $t=0$, $\left<-,-\right>$ is also positive definite on the orthogonal complement of $hH^{p-1,q-1}(X)$, as desired.
\end{proof}

 \section{The metric $\|\cdot\|_{(v,w)}$}
 
 \begin{lemma}\label{lemma: restriction quasi-isometry 2}
     Let $X$ be a smooth projective variety of dimension $n$ over $\C$, $h_0,\cdots,h_r\in\mathrm{Amp}(X)\ (r\leq n)$ such that $h_1,\cdots,h_r$ are very ample. Let $Y_1,\cdots,Y_r$ be smooth divisors representing $h_1,\cdots,h_r$ that intersect transversally. Let $v=h_1\cdots h_r\in H^{r,r}(X,\R)$, $Y=Y_1\cap\cdots\cap Y_r$. Suppose that $N\geq 2$ is a real number such that $Nh_i-h_j\in\mathrm{Amp}(X)$ for all $0\leq i,j\leq r$, then
     \begin{align}\label{isometry3}
         \|a|_{Y}\|_{h_0}\overset{N}{\approx}\|va\|_{h_0}\quad(\forall a\in H^{p,q}(X),\forall p,q\geq 0),
     \end{align}
     \begin{align}\label{isometry4}
         \|a\|_{h_0}\overset{N}{\approx}\|va\|_{h_0}\quad(\forall a\in H^{p,q}(X),\ p+q\leq n-r).
     \end{align}
 \end{lemma}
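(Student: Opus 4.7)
The plan is to prove both quasi-isometries by induction on $r$, establishing (\ref{isometry3}) first and then deriving (\ref{isometry4}) from it together with an iterated restriction estimate. The base case $r=0$ is trivial, since then $v=1$ and $Y=X$.

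For the inductive step of (\ref{isometry3}), set $v' = h_1\cdots h_{r-1}$ and $Y' = Y_1\cap\cdots\cap Y_{r-1}$, so that $Y = Y'\cap Y_r$ is a smooth divisor on $Y'$ representing the very ample class $h_r|_{Y'}$ (very ampleness survives restriction, and transversality makes the intersection smooth). Applying the induction hypothesis to the class $h_r a\in H^{p+1,q+1}(X)$ with the divisors $Y_1,\ldots,Y_{r-1}$ (which still satisfy the $N$-compatibility hypothesis) gives
\[\|va\|_{h_0}=\|v'(h_r a)\|_{h_0}\overset{N}{\approx}\|(h_r a)|_{Y'}\|_{h_0}=\|h_r\cdot a|_{Y'}\|_{h_0}.\]
On the smooth projective variety $Y'$, lemma \ref{lemma: restriction quasi-isometry 1} (\ref{isometry2}) yields $\|h_r\cdot a|_{Y'}\|_{h_r}\approx\|a|_Y\|_{h_r}$. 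Since the restriction of an ample class to a smooth closed subvariety is ample, the $N$-compatibility of $h_0$ and $h_r$ survives on $Y'$ and on $Y$, so (\ref{compare2}) lets us exchange the metrics $\|\cdot\|_{h_r}$ and $\|\cdot\|_{h_0}$ at the cost of an $N^C$ factor:
\[\|h_r\cdot a|_{Y'}\|_{h_0}\overset{N}{\approx}\|h_r\cdot a|_{Y'}\|_{h_r}\approx\|a|_Y\|_{h_r}\overset{N}{\approx}\|a|_Y\|_{h_0}.\]
Combining the two displays completes the inductive step for (\ref{isometry3}).

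For (\ref{isometry4}), granting (\ref{isometry3}), it suffices to prove the iterated restriction quasi-isometry $\|a\|_{h_0}\overset{N}{\approx}\|a|_Y\|_{h_0}$ for all $a\in H^{p,q}(X)$ with $p+q\leq n-r$. Set $W_i=Y_1\cap\cdots\cap Y_i$ (so $W_0=X$ and $W_r=Y$). At the $i$th step, on $W_{i-1}$ of dimension $n-i+1$, the divisor $W_i$ is smooth and represents the very ample class $h_i|_{W_{i-1}}$. Since $p+q\leq n-r\leq n-i=\dim W_{i-1}-1$, combining (\ref{isometry1}) and (\ref{isometry2}) on $W_{i-1}$ produces $\|a|_{W_{i-1}}\|_{h_i}\approx\|a|_{W_i}\|_{h_i}$. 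A further application of (\ref{compare2}) switches $h_i$ to $h_0$, and chaining the $r$ stages yields $\|a\|_{h_0}\overset{N}{\approx}\|a|_Y\|_{h_0}$, from which (\ref{isometry4}) follows by composing with (\ref{isometry3}).

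The only real bookkeeping concern is the accumulation of $N^C$ factors through the induction on $r$ and the iterated restriction: each of the at most $n$ stages contributes a single $N^{O(1)}$ factor, so the final constants are of the form $N^{C(n)}$, matching the requirements of the $\overset{N}{\approx}$ notation. The key conceptual ingredients are all already in place: the single-divisor restriction isometry of lemma \ref{lemma: restriction quasi-isometry 1}, the preservation of very ampleness and $N$-compatibility under restriction to smooth transversal intersections, and the metric comparison (\ref{compare2}).
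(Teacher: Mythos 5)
Your proof is correct and uses essentially the same ingredients as the paper's: an induction on $r$ driven by Lemma \ref{lemma: restriction quasi-isometry 1} and the metric comparison (\ref{compare2}). The only difference is organizational — the paper restricts to $Y_r$ first and runs the induction on $Y_r$ with the classes $h_i|_{Y_r}$, proving (\ref{isometry3}) and (\ref{isometry4}) in a single combined induction, whereas you run the induction on $X$ with $r-1$ classes applied to $h_r a$ and then derive (\ref{isometry4}) from (\ref{isometry3}) via a separate iterated-restriction chain through $W_i=Y_1\cap\cdots\cap Y_i$; both arrangements yield the same $N^{C(n)}$ bookkeeping and are equivalent in substance.
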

 \begin{proof}
     Use induction on $r$. When $r=0$ the assertion is empty. Now suppose $k>0$. Applying the induction hypothesis to $h_0|_{Y_r},\cdots,h_{r-1}|_{Y_r},a|_{Y_r}$, we get
     \begin{align}\label{eq11}
         \|a|_Y\|_{h_0}\overset{N}{\approx}\|h_1\cdots h_{r-1}a|_{Y_r}\|_{h_0},
     \end{align}
     \begin{align}\label{eq12}
         \|a|_Y\|_{h_0}\overset{N}{\approx}\|a|_{Y_r}\|_{h_0}\quad(p+q\geq n-r).
     \end{align}
     By (\ref{isometry1})(\ref{isometry2}) we have
     \begin{align}\label{eq13}
         \|h_1\cdots h_{r-1}a|_{Y_r}\|_{h_r}\approx\|h_1\cdots h_ra\|_{h_r},
     \end{align}
     \begin{align}\label{eq14}
         \|a|_{Y_r}\|_{h_r}\approx\|a\|_{h_r}\quad(p+q\geq n-1).
     \end{align}
     By (\ref{compare2}) we have
     \begin{align}\label{eq15}
         \|h_1\cdots h_{r-1}a|_{Y_r}\|_{h_0}\overset{N}{\approx}\|h_1\cdots h_{r-1}a|_{Y_r}\|_{h_r},\quad\|va\|_{h_r}\overset{N}{\approx}\|va\|_{h_0},
     \end{align}
     \begin{align}\label{eq16}
         \|a|_{Y_r}\|_{h_r}\overset{N}{\approx}\|a|_{Y_r}\|_{h_0},\quad\|a\|_{h_r}\overset{N}{\approx}\|a\|_{h_0}
     \end{align}
     Combining (\ref{eq11})(\ref{eq13})(\ref{eq15}) we get (\ref{isometry3}). Combining (\ref{eq12})(\ref{eq14})(\ref{eq16}) and (\ref{isometry3}) we get (\ref{isometry4}).
 \end{proof}
 \begin{proposition}\label{proposition: hard Lefschetz for mixed ample line bundles}
     Let $X$ be a smooth projective variety of dimension $n$ over $\C$, $p,q,k$ nonnegative integers satisfying $p+q+k=n$, $h_1,\cdots,h_k\in\Amp(X)$. Then the map
    \[-\wedge h_1\cdots h_k:H^{p,q}(X)\rightarrow H^{n-q,n-p}(X)\]
    is an isomorphsm. Moreover, if $Nh_i-h_j\in\Amp(X)\ (\forall1\leq i,j\leq k)$ for some $N\geq 2$, then $\forall a\in H^{p,q}(X)$,
    \begin{align}\label{eq6}
        \|h_1\cdots h_ka\|_{h_1}\overset{N}{\approx}\|a\|_{h_1}.
    \end{align}
 \end{proposition}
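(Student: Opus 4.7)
The plan is to execute the ``hyperplane trick'' outlined in the introduction in its quantitative form, using Lemma \ref{lemma: restriction quasi-isometry 2} to replace each classical restriction isomorphism by a quasi-isometry. I would prove the quantitative estimate (\ref{eq6}) first; the isomorphism statement on arbitrary ample classes then follows by choosing, for any given $h_1,\dots,h_k\in\Amp(X)$, some $N\geq 2$ with $Nh_i-h_j\in\Amp(X)$ for all $i,j$ (possible by openness of $\Amp(X)$), and combining (\ref{eq6}) with Serre duality $\dim H^{p,q}(X)=\dim H^{n-q,n-p}(X)$.

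For (\ref{eq6}), I first reduce to the case where $h_1,\dots,h_k$ are all very ample. The metric $\|\cdot\|_h$ on $H^{p,q}(X)$ depends continuously on $h\in\Kah(X)$ via formulas (\ref{compute the metric by Q1}) and (\ref{compute the metric by Q2}), and (\ref{eq6}) is closed under limits up to an arbitrarily small enlargement of $N$, so density of $N^1(X)_\Q$ in $N^1(X)_\R$ lets me assume the $h_i$ are rational. Choosing a positive integer $m$ with each $mh_i$ very ample and using the rescaling identity (\ref{rescaling}), the estimate (\ref{eq6}) for $(mh_1,\dots,mh_k)$ is equivalent to (\ref{eq6}) for $(h_1,\dots,h_k)$, so I may assume the $h_i$ themselves are very ample. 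By Bertini I choose transversally intersecting smooth divisors $Y_1,\dots,Y_{k-1}$ representing $h_1,\dots,h_{k-1}$ and set $Y=Y_1\cap\cdots\cap Y_{k-1}$, a smooth subvariety of dimension $p+q+1$.

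Setting $v=h_1\cdots h_{k-1}$, Lemma \ref{lemma: restriction quasi-isometry 2} with $h_0=h_1$ (permitted by $Nh_i-h_j\in\Amp(X)$), via (\ref{isometry3}) and (\ref{isometry4}), yields
\[\|a\|_{h_1}\overset{N}{\approx}\|va\|_{h_1}\overset{N}{\approx}\|a|_Y\|_{h_1},\]
and applying (\ref{isometry3}) to the class $h_k a\in H^{p+1,q+1}(X)$ yields
\[\|h_1\cdots h_k a\|_{h_1}=\|v(h_k a)\|_{h_1}\overset{N}{\approx}\|(h_k a)|_Y\|_{h_1}.\]
It remains to compare $\|a|_Y\|_{h_1}$ and $\|(h_k a)|_Y\|_{h_1}$. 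Since $\dim Y=p+q+1$, (\ref{isometry1}) on $Y$ with K\"ahler class $h_k|_Y$ gives $\|a|_Y\|_{h_k}\approx\|(h_k a)|_Y\|_{h_k}$, and (\ref{compare2}) (applied to $h_1|_Y,h_k|_Y\in\Kah(Y)$, which inherit $Nh_1-h_k,Nh_k-h_1\in\Amp(X)$ by restriction) lets me swap $\|\cdot\|_{h_k}$ for $\|\cdot\|_{h_1}$ on each side at the cost of an $\overset{N}{\approx}$ factor. Concatenating the three groups of estimates yields (\ref{eq6}).

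The main technical work is the careful bookkeeping of $\overset{N}{\approx}$ constants along this chain of estimates (the mild inflation of $N$ arising from the rational approximation is absorbed by enlarging the hidden exponent $C=C(n)$ in the definition of $\overset{N}{\approx}$); no fundamentally new input is required beyond Lemma \ref{lemma: restriction quasi-isometry 2} and the machinery developed in Section 2.
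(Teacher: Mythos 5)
Your proposal is correct and follows the same strategy as the paper: reduce to rational and then very ample $h_i$ via continuity and the rescaling law, then invoke Lemma \ref{lemma: restriction quasi-isometry 2}. The only divergence is a small inefficiency: the paper simply applies (\ref{isometry4}) with $r=k$ and $h_0=h_1$, which (since $p+q=n-k$ satisfies $p+q\le n-r$) gives (\ref{eq6}) in one line, whereas you take $v=h_1\cdots h_{k-1}$, restrict to the codimension-$(k-1)$ complete intersection $Y$, and then re-derive the last inductive step of Lemma \ref{lemma: restriction quasi-isometry 2} by hand using (\ref{isometry1}), (\ref{isometry3}) and (\ref{compare2}) on $Y$ — a correct but redundant detour.
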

\begin{proof}
    It suffices to prove the moreover part. By continuity we may assume that $h_1,\cdots,h_k\in\Amp(X)\cap N^1(X)_\Q$. Let $M$ be a positive integer such that $Mh_1,\cdots,Mh_k$ are all very ample. If we replace $h_1,\cdots,h_k$ by $Mh_1,\cdots,Mh_k$, both sides of (\ref{eq6}) are scaled by the factor $M^{\frac{n-p-q}{2}}$. So we may assume that $h_1,\cdots,h_k$ are all very ample. This case is just (\ref{isometry4}) of lemma \ref{lemma: restriction quasi-isometry 2}.
\end{proof}
\begin{definition}\label{definition: HR global cohomology}
    Let $X$ be a compact K\"ahlerian manifold of dimension $n$, $w\in\Kah(X),\ v\in H^{r,r}(X,\R)\ (0\leq r\leq n)$. $(v,w)$ is called a Hodge-Riemann pair if for all nonnegative integers $p,q,k$ satisfying $p+q+k+r=n$, we have:
    \begin{itemize}
    \item[(1)] (Hard Lefschetz)
    \[-\wedge vw^k:H^{p,q}(X)\rightarrow H^{n-q,n-p}(X)\] is an isomorphism.
    \item[(2)] (Hodge-Riemann relation) The following nondegenerate Hermitian form on $H^{p,q}(X)$
    \[\left<a,b\right>_{vw^k}:=(-1)^q\sqrt{-1}^{(p+q)^2}\int_Xa\bar{b}vw^k\]
    is positive definite on the subspace
    \[P^{p,q}_{(v,w)}(X):=\ker\left(-\wedge vw^{k+1}:H^{p,q}(X)\rightarrow H^{n-q+1,n-p+1}(X)\right).\]
\end{itemize}
\end{definition}
\begin{corollary}
    Let $X$ be a smooth projective variety of dimension $n$, $h_1,\cdots,h_r,h\in\Amp(X)\ (0\leq r\leq n)$. Let $v=h_1\cdots h_r$. Then $(v,h)$ is a Hodge-Riemann pair.
\end{corollary}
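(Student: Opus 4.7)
The hard Lefschetz part (1) is immediate from proposition \ref{proposition: hard Lefschetz for mixed ample line bundles}: apply it to the $r+k$ ample classes $h_1,\ldots,h_r,h,\ldots,h$ (with $h$ repeated $k$ times) to see that $-\wedge vh^k$ is an isomorphism. In particular $\langle\cdot,\cdot\rangle_{vh^k}$ is nondegenerate on $H^{p,q}(X)$. Applying the same proposition to the indices $(p-1,q-1,k+2)$ shows that $-\wedge vh^{k+2}:H^{p-1,q-1}(X)\rightarrow H^{n-q+1,n-p+1}(X)$ is an isomorphism; combined with Serre duality this forces $\langle\cdot,\cdot\rangle_{vh^k}$ to be nondegenerate on the subspace $hH^{p-1,q-1}(X)\subset H^{p,q}(X)$. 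A direct computation (again Serre duality) identifies $P^{p,q}_{(v,h)}(X)$ as the orthogonal complement of $hH^{p-1,q-1}(X)$ with respect to $\langle\cdot,\cdot\rangle_{vh^k}$, giving a nondegenerate orthogonal decomposition
\[ H^{p,q}(X)=hH^{p-1,q-1}(X)\oplus P^{p,q}_{(v,h)}(X). \]

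For the Hodge-Riemann relation (2), the plan is the standard continuity argument linking $(v,h)$ to the classical situation $(h^r,h)$. Set $h_i(t):=(1-t)h_i+th\in\Amp(X)$ for $t\in[0,1]$ (using convexity of $\Amp(X)$), and put $v(t):=h_1(t)\cdots h_r(t)$. The discussion above applies uniformly in $t$, so $\langle\cdot,\cdot\rangle_{v(t)h^k}$ is nondegenerate on $H^{p,q}(X)$ and on the fixed summand $hH^{p-1,q-1}(X)$. Since the form depends continuously on $t$ and remains nondegenerate on $hH^{p-1,q-1}(X)$, its orthogonal complement $P^{p,q}_{(v(t),h)}(X)$ varies continuously in $t$, and the signature of the restricted form on this complement is constant in $t$.

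At $t=1$ we have $v(1)=h^r$, so $v(1)h^k=h^{n-p-q}$, $P^{p,q}_{(v(1),h)}(X)$ coincides with the classical primitive subspace, and the classical Hodge-Riemann relation gives positive definiteness there. By the signature constancy we obtain positive definiteness at $t=0$ as well, i.e., for $(v,h)$, concluding the proof. There is no serious obstacle; the only slightly delicate point is the orthogonal complement description of $P^{p,q}_{(v,h)}(X)$, but this is a routine consequence of Serre duality once the hard Lefschetz part is in hand.
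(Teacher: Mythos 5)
Your proof is correct and fills in exactly what the paper means by ``standard continuity argument'': you derive the orthogonal decomposition $H^{p,q}(X)=hH^{p-1,q-1}(X)\oplus P^{p,q}_{(v,h)}(X)$ from Proposition~\ref{proposition: hard Lefschetz for mixed ample line bundles} applied in degrees $(p,q)$ and $(p-1,q-1)$, deform along $h_i(t)=(1-t)h_i+th$ staying inside $\Amp(X)$, and use constancy of signature on the nondegenerate summand and its complement to transport positive definiteness from the classical case $(h^r,h)$ at $t=1$ back to $t=0$. This is the same strategy the paper invokes (and is also spelled out, with $t\to\infty$ instead of a finite deformation, in the paper's proof of Theorem~\ref{theorem 1 HR}), so there is no gap and no genuinely different route here.
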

\begin{proof}
    By proposition \ref{proposition: hard Lefschetz for mixed ample line bundles} and standard continuity argument.
\end{proof}
Let $(v,w)$ be a Hodge-Riemann pair on $X$. We define an Hermitian metric $\|\cdot\|_{(v,w)}^2$ on $H^{p,q}(X)$ for $p+q\leq n-r$ or $p+q\geq n+r$ as follow. In the following we keep assuming that $p+q\leq n-r$ and define $\|\cdot\|_{(v,w)}^2$ on $H^{p,q}(X)$ and $H^{n-q,n-p}(X)$ separately. By the hard Lefschetz property we have the Lefschetz decomposition:
\[H^{p,q}(X)=\bigoplus_{i=0}^{\min(p,q)}w^iP_{(v,w)}^{p-i,q-i}(X).\]
In other words every $a\in H^{p,q}(X)$ can be written uniquely as
\[a=\sum_{i=0}^{\min(p,q)}w^ia_i\]
with $a_i\in P_{(v,w)}^{p-i,q-i}(X)$. We define
\begin{align}\label{compute the metric by Q3}
    \|a\|_{(v,w)}^2:=\sum_{i=0}^{\min(p,q)}\frac{i!}{(k+i)!}\left<a_i,a_i\right>_{vw^{k+2i}}
\end{align}
where $k=n-r-p-q$.

Every $a\in H^{n-q,n-p}(X)$ can be written uniquely as
\[a=\sum_{i=0}^{\min(p,q)}vw^{k+i}a_i\]
with $a_i\in P_{(v,w)}^{p-i,q-i}(X)$, where $k=n-p-q-r$. We define
\begin{align}\label{compute the metric by Q4}
    \|a\|_{(v,w)}^2:=\sum_{i=0}^{\min(p,q)}\frac{(k+i)!}{i!}\left<a_i,a_i\right>_{vw^{k+2i}}.
\end{align}
\textbf{Rescaling:}
\begin{align}\label{rescaling1}
    \|a\|_{(\lambda v,w)}^2=\lambda^{\pm1}\|a\|_{(v,w)}^2\quad(+\text{ for }p+q\leq n-r,-\text{ for }p+q\geq n+r)
\end{align}
\begin{align}\label{rescaling2}
    \|a\|_{(\lambda^r v,\lambda w)}^2=\lambda^{n-p-q}\|a\|_{(v,w)}^2\quad(\forall p+q\leq n-r\text{ or }p+q\geq n+r)
\end{align}
\begin{remark}\label{remark: continuity}
    It is clear from definition that if $(v_0,w_0)$ is a Hodge-Riemann pair on a compact K\"ahlerian manifold $X$, then every $(v,w)$ in a small neighborhood of $(v_0,w_0)\in H^{1,1}(X,\R)\times H^{r,r}(X,\R)$ is also a Hodge-Riemann pair and the map $(a,v,w)\mapsto\|a\|_{(v,w)}$ is continuous.
\end{remark}
The metric is characterized by the following lemma.
\begin{lemma}\label{lemma: restriction identities}
    Let $X$ be a smooth projective variety of dimension $n$, $h_1,\cdots,h_r,h\in\Amp(X)\ (0\leq r\leq n)$. Let $v=h_1\cdots h_r$. Suppose that $h_1,\cdots,h_r$ are very ample and are represented by smooth divisors $Y_1,\cdots, Y_r$ that intersect transversally. Let $Y=Y_1\cap\cdots\cap Y_r$. Then for $a\in H^{p,q}(X)$, we have
    \begin{align}\label{restriction identity 1}
        \|a\|_{(v,h)}=\|a|_Y\|_h\quad(p+q\leq n-r),
    \end{align}
    \begin{align}\label{restriction identity 2}
        \|va\|_{(v,h)}=\|a|_Y\|_h\quad(p+q\geq n-r).
    \end{align}
\end{lemma}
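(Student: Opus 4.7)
The argument rests on two structural facts. First, since $Y$ is the transverse intersection of smooth divisors $Y_i$ representing the $h_i$, the class $v = h_1\cdots h_r$ is Poincar\'e dual to $[Y]$, giving $\int_X c\cdot v = \int_Y c|_Y$ for $c \in H^{n-r,n-r}(X)$, and hence the pairing identity
\[\left<a,b\right>_{vh^k}^X = \left<a|_Y,b|_Y\right>_{h^k}^Y \quad (p+q+k = n-r).\]
Second, iterated Lefschetz hyperplane shows restriction $H^{p'-1,q'-1}(X) \to H^{p'-1,q'-1}(Y)$ is surjective for $p'+q' \leq n-r+1$; combined with the pairing identity and Poincar\'e duality on $X$, this implies that restriction sends $P^{p',q'}_{(v,h)}(X)$ into $P^{p',q'}(Y)$ whenever $p'+q' \leq n-r$.

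For identity (\ref{restriction identity 1}), decompose $a = \sum_i h^i a_i$ with $a_i \in P^{p-i,q-i}_{(v,h)}(X)$. Restricting gives $a|_Y = \sum_i h^i(a_i|_Y)$ with $a_i|_Y \in P^{p-i,q-i}(Y)$, which is the unique Lefschetz decomposition on $Y$ (the parameter $k = n-r-p-q$ agrees on both sides). Formulas (\ref{compute the metric by Q3}) on $X$ and (\ref{compute the metric by Q1}) on $Y$ share the coefficients $\frac{i!}{(k+i)!}$, and the pairing identity matches inner products term by term, yielding $\|a\|_{(v,h)} = \|a|_Y\|_h$.

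For identity (\ref{restriction identity 2}), apply Hard Lefschetz for $(v,h)$: with $p_0 = n-q-r$, $q_0 = n-p-r$, $k = p+q+r-n$, there is a unique $\tilde a \in H^{p_0,q_0}(X)$ with $vh^k\tilde a = va$. Decomposing $\tilde a = \sum_i h^i c_i$ with $c_i \in P^{p_0-i,q_0-i}_{(v,h)}(X)$, formula (\ref{compute the metric by Q4}) together with the pairing identity give
\[\|va\|_{(v,h)}^2 = \sum_i \frac{(k+i)!}{i!}\left<c_i|_Y,c_i|_Y\right>_{h^{k+2i}}^Y.\]
Comparing with (\ref{compute the metric by Q2}) on $Y$, the equality $\|va\|_{(v,h)} = \|a|_Y\|_h$ follows provided $a|_Y = h^k\tilde a|_Y$ in $H^{p,q}(Y)$, i.e., $(a - h^k\tilde a)|_Y = 0$, for then $\sum_i h^{k+i}(c_i|_Y)$ is the unique Lefschetz decomposition of $a|_Y$ on $Y$.

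The main obstacle is the resulting vanishing claim: if $b \in H^{p,q}(X)$ satisfies $vb = 0$ and $p+q \geq n-r$, then $b|_Y = 0$. I would prove this by induction on $r$, using the projection formula for the Gysin map $\iota_{r*}\colon H^*(Y_r) \to H^*(X)$, which gives $0 = vb = \iota_{r*}(v'\cdot b|_{Y_r})$ with $v' = h_1|_{Y_r}\cdots h_{r-1}|_{Y_r}$. The Poincar\'e dual of Lefschetz hyperplane shows $\iota_{r*}$ is injective on $H^j(Y_r)$ for $j \geq n$; since $j = p+q+2r-2 \geq n+r-2$, this applies for $r \geq 2$, while the case $r = 1$ with $p+q \geq n$ is direct and the boundary $p+q = n-r$ is already absorbed into identity (\ref{restriction identity 1}) (where $b = 0$ automatically, as $\tilde a = a$). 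Hence $v'\cdot b|_{Y_r} = 0$ on $Y_r$, and the induction hypothesis on $Y_r$ with the $r-1$ transverse divisors $Y_i \cap Y_r$ gives $(b|_{Y_r})|_Y = b|_Y = 0$, completing the proof.
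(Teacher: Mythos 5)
Your proposal is correct and follows the same core strategy as the paper: use the pairing identity $\left<a,b\right>_{vh^k}^X = \left<a|_Y,b|_Y\right>_{h^k}^Y$ to match the Lefschetz decomposition of $a$ (resp.\ the preimage $\tilde a$ of $va$) with the Lefschetz decomposition of $a|_Y$ on $Y$, then compare formulas \eqref{compute the metric by Q1}--\eqref{compute the metric by Q4} term by term. The one place you go further than the paper is the kernel claim: the paper simply asserts
\[\ker\bigl(-\wedge v: H^{p,q}(X)\to H^{p+r,q+r}(X)\bigr)=\ker\bigl(H^{p,q}(X)\to H^{p,q}(Y)\bigr)\quad(p+q\geq n-r)\]
as known, whereas you correctly recognize that the inclusion $\ker(-\wedge v)\subset\ker(\text{restriction})$ is the nontrivial half and prove it by induction on $r$ using the Gysin map $\iota_{r*}$ and its injectivity in degrees $\geq n$. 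That argument is sound, including your separate handling of $r=1$ at the boundary degree $p+q=n-1$ via classical hard Lefschetz. A slightly more direct route to the same inclusion (likely what the paper has in mind): for $p+q>n-r$, by Poincar\'e duality on $Y$ it suffices to check $\int_Y(b|_Y)\,c'=0$ for all $c'\in H^{n-r-p,n-r-q}(Y)$; since $\deg c' = 2(n-r)-p-q<n-r$, iterated Lefschetz hyperplane lets one lift $c'$ to $c\in H^*(X)$ and then $\int_Y(b|_Y)(c|_Y)=\int_X vbc=0$. The boundary $p+q=n-r$ is handled by the hard Lefschetz isomorphism $-\wedge v$ directly, exactly as you observe. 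Either route closes the gap; the rest of your argument reproduces the paper's.
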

\begin{proof}
    Since $\int_Ya=\int_Xva$, for $p+q\leq n-r-1$ if we identify $H^{p,q}(X)$ with $H^{p,q}(Y)$ via the natural restriction then $\left<-,-\right>_{vh^k}$ is equal to $\left<-,-\right>_{h^k}$. Note that $P^{p,q}$ is nothing but the orthogonal completement of $hH^{p,q}$ with respect to $\left<-,-\right>$, it follows that the Lefschetz decompositions on these two spaces coincide. The same holds for $p+q=n-r$ except that $H^{p,q}(X)$ is a subspace of $H^{p,q}(Y)$. So comparing (\ref{compute the metric by Q1}) and (\ref{compute the metric by Q3}) we get (\ref{restriction identity 1}).

    If $a\in H^{p,q}(X)$ with $p+q\geq n-r$, $va$ can be written uniquely as $va=\sum_ivh^{k+i}a_i$. Since $\ker\left(-\wedge v:H^{p,q}(X)\rightarrow H^{p+r,q+r}(X)\right)=\ker\left(H^{p,q}(X)\rightarrow H^{p,q}(Y)\right)$, we have $a|_Y=\sum_ih^{k+i}a_i|_Y$. So comparing (\ref{compute the metric by Q2}) and (\ref{compute the metric by Q4}) we get (\ref{restriction identity 2}).
\end{proof}
We end this section by the following lemma, which will be used to prove lemma \ref{lemma: eliminate equivalent local} (a local result). Basically it means that if $v=h_1\cdots h_r$, then dropping some $h_i$ that are equivalent to $h$ does not affect the metric $\|\cdot\|_{(v,h)}$.
\begin{lemma}\label{lemma: eliminate equivalent global}
    Let $X$ be a smooth projective variety of dimension $n$, $0\leq s\leq r\leq n$ nonnegative integers. Let $h_1,\cdots,h_r,h\in\Amp(X)$, $v=h_1\cdots h_r,\ v'=h_1\cdots h_s$. Suppose that $Nh-h_i,Nh_i-h\in\Amp(X)$ for $i=s+1,\cdots,r$. Then
    \begin{align}\label{eq17}
        \|a\|_{(v,h)}\overset{N}{\approx}\|a\|_{(v',h)}
    \end{align}
    for all $a\in H^{p,q}(X)$ with $p+q\leq n-r$ or $p+q\geq n+r$.
\end{lemma}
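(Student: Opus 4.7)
The plan is to combine an induction on $r-s$ with a ``hyperplane'' reduction for the single-drop step, and to handle the range $p+q\geq n+r$ via a Poincar\'e duality for $\|\cdot\|_{(v,h)}$.

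By induction on $r-s$ it suffices to treat the case $r=s+1$, dropping the classes $h_{s+1},\ldots,h_r$ one at a time (each $\overset{N}{\approx}h$); concatenating at most $r-s\leq n$ single-drop estimates only affects the $\overset{N}{\approx}$-constant by a factor that depends on $n$. For the single-drop $v=v''h_r$ with $v''=h_1\cdots h_{r-1}$, I would factor the desired comparison as
\[
\|\cdot\|_{(v''h_r,h)}\overset{N}{\approx}\|\cdot\|_{(v''h,h)}\approx\|\cdot\|_{(v'',h)},
\]
where the second $\approx$ is purely combinatorial: the primitive subspaces $P^{p-i,q-i}_{(v''h,h)}$ and $P^{p-i,q-i}_{(v'',h)}$ coincide (both equal $\ker(-\wedge v''h^{k+2})$ at the appropriate complementary degree $k$), so both Lefschetz decompositions of a given $a\in H^{p,q}(X)$ are the same; comparing (\ref{compute the metric by Q3}) term by term, only the factorial coefficients differ, by the factor $1/(k+1+i)\in[1/n,1]$ on each primitive component, which is absorbed into an $\approx$ with constant depending only on $n$.

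The heart of the matter is $\|\cdot\|_{(v''h_r,h)}\overset{N}{\approx}\|\cdot\|_{(v''h,h)}$. Using remark \ref{remark: continuity} and density of $N^1(X)_\Q$ in $N^1(X)_\R$ (the conditions $Nh-h_r,Nh_r-h\in\Amp(X)$ being open), I may first assume all classes are rational; by the rescaling formula (\ref{rescaling2}), which scales both sides of the desired comparison by the same factor, I may further assume $h_1,\ldots,h_{r-1},h_r,h$ are all very ample. Choose transversally intersecting smooth divisors $Y_1,\ldots,Y_{r-1},Y_r,Y_h$ representing them, let $Y'=Y_1\cap\cdots\cap Y_{r-1}$ (smooth projective of dimension $n-r+1$), and set $Y_r'=Y'\cap Y_r$, $Y''=Y'\cap Y_h$. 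Lemma \ref{lemma: restriction identities} then yields $\|a\|_{(v''h_r,h)}=\|a|_{Y_r'}\|_h$ and $\|a\|_{(v''h,h)}=\|a|_{Y''}\|_h$, while lemma \ref{lemma: restriction quasi-isometry 2} on $Y'$ (using that $Nh-h_r,Nh_r-h\in\Amp(X)$ restricts to ampleness on $Y'$) gives, in the dimension range $p+q\leq n-r=\dim Y'-1$, the chain
\[
\|a|_{Y_r'}\|_h\overset{N}{\approx}\|h_r(a|_{Y'})\|_h\overset{N}{\approx}\|a|_{Y'}\|_h\overset{N}{\approx}\|h(a|_{Y'})\|_h\overset{N}{\approx}\|a|_{Y''}\|_h.
\]

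For the range $p+q\geq n+r$, I would establish the Poincar\'e duality characterization
\[
\|a\|_{(v,h)}=\sup_{0\neq b\in H^{n-p,n-q}(X)}\frac{\left|\int_X ab\right|}{\|b\|_{(v,h)}},
\]
which reduces this range to the already-treated case $p+q\leq n-r$ applied to $b$ (whose degree $2n-p-q\leq n-r$). The duality follows by expanding $a=\sum h^ia_i$ and $b=\sum vh^{k+j}b_j$ in their Lefschetz decompositions and observing that $\int_X a_ib_jvh^{k+i+j}=0$ for $i\neq j$ by primitivity (if $j>i$ then $vh^{k+2i+1}a_i=0$); a weighted Cauchy--Schwarz matched to (\ref{compute the metric by Q3})--(\ref{compute the metric by Q4}), together with an extremizer $b$ built from complex conjugates of the primitive pieces of $a$, then gives equality. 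The main obstacle is tracking the $\overset{N}{\approx}$-constants uniformly through the rescaling, the four-link restriction chain on $Y'$, and the induction on $r-s$, so that the final constant depends only on $n$; each link introduces a power of $N$ with exponent a function of $n$ alone, which is precisely what $\overset{N}{\approx}$ absorbs.
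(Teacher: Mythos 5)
Your proposal is correct, and the basic scaffolding (continuity via remark \ref{remark: continuity}, rescaling by lemma \ref{rescaling2} to reach the very ample rational case, lemma \ref{lemma: restriction identities} to turn the $(v,h)$-metric into a hyperplane-restricted $h$-metric, and lemma \ref{lemma: restriction quasi-isometry 2}) is the same as the paper's, but you organize the two ranges differently. The paper proves the low range $p+q\leq n-r$ in one shot: it takes $Y=Y_1\cap\cdots\cap Y_s$ and $Z=Y_1\cap\cdots\cap Y_r$, gets $\|a\|_{(v,h)}=\|a|_Z\|_h$ and $\|a\|_{(v',h)}=\|a|_Y\|_h$ directly, and applies lemma \ref{lemma: restriction quasi-isometry 2} to $a|_Y$ with all of $h_{s+1},\cdots,h_r$ at once (which incidentally requires $Nh_i-h_j\in\Amp(X)$ for $i,j\in\{s+1,\dots,r\}$; this follows from the stated hypotheses since $N^2h_i-h_j=N(Nh_i-h)+(Nh-h_j)$, at the harmless cost of a power of $N$). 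Your single-drop induction with the intermediate pair $(v''h,h)$ is a correct and somewhat more modular alternative; the purely combinatorial comparison $\|\cdot\|_{(v''h,h)}\approx\|\cdot\|_{(v'',h)}$ via the coinciding primitives and the factor $1/(k+i+1)\in[1/n,1]$ checks out. For the high range $p+q\geq n+r$ the routes genuinely diverge: the paper simply writes $a=vb$ with $b\in H^{p-r,q-r}(X)$, uses lemma \ref{lemma: restriction identities} to get $\|vb\|_{(v,h)}=\|b|_Z\|_h$ and $\|vb\|_{(v',h)}=\|(h_{s+1}\cdots h_r)b|_Y\|_h$, and then applies (\ref{isometry3}) on $Y$; you instead establish a Poincar\'e duality characterization $\|a\|_{(v,h)}=\sup_{b}|\int_X ab|/\|b\|_{(v,h)}$ (sup over $H^{n-p,n-q}(X)$) and dualize to the already-proved low range. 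That duality is not stated in section~4 of the paper, so you would have to verify the weighted Cauchy--Schwarz and the extremizer $b=\sum_i c_i h^i\bar a_i$; your sketch of this is correct (the cross terms vanish by primitivity, and the conjugate classes land in $P^{q'-i,p'-i}$ as needed), and the duality is in fact a clean structural feature of the $(v,h)$-metric worth recording. Overall both proofs are sound; the paper's is more economical, yours isolates a reusable duality lemma.
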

\begin{proof}
    By continuity (Remark \ref{remark: continuity}) we may assume that $h_1,\cdots,h_r\in\Amp(X)\cap N^1(X)_\Q$. Let $M$ be a positive integer such that $Mh_1,\cdots,Mh_r$ are all very ample. If we replace $h_1,\cdots,h_r,h$ by $Mh_1,\cdots,Mh_r,Mh$, by the rescaling law (\ref{rescaling2}) both sides of the inequality (\ref{eq17}) are scaled by $M^{\frac{n-p-q}{2}}$. The assumption $Nh-h_i,Nh_i-h\in\Amp(X)$ is also invariant. So we may assume that $h_1,\cdots,h_r$ are all very ample. Take transverally intersecting smooth divisors $Y_1,\cdots,Y_r$ representing $h_1,\cdots,h_r$. Let $Y=Y_1\cap\cdots\cap Y_s,\ Z=Y_1\cap\cdots\cap Y_r$.

    First consider the case $p+q\leq n-r$. By lemma \ref{lemma: restriction identities},
    \begin{align*}
        \|a\|_{(v,h)}=\|a|_Z\|_h,\quad\|a\|_{(v',h)}=\|a|_Y\|_h
    \end{align*}
    while applying lemma \ref{lemma: restriction quasi-isometry 2} to $a|_Y$ we get
    \begin{align*}
        \|a|_Y\|_h\overset{N}{\approx}\|a|_Z\|_h.
    \end{align*}
    Combining gives (\ref{eq17}).

    Next consider the case $p+q\geq n+r$. $-\wedge v:H^{p-r,q-r}(X)\rightarrow H^{p,q}(X)$ is surjective (follows from the hard Lefschetz property of a Hodge-Riemann pair), so we can write $a=vb$ for some $b\in H^{p-r,q-r}(X)$. Let $v''=h_{s+1}\cdots h_r$. By lemma \ref{lemma: restriction identities},
    \begin{align*}
        \|vb\|_{(v,h)}=\|b|_Z\|_{h},\quad\|vb\|_{(v',h)}=\|v''b|_Y\|_{h}.
    \end{align*}
    Applying (\ref{isometry3}) of lemma \ref{lemma: restriction quasi-isometry 2} to $b|_Y$ we get
    \begin{align*}
        \|v''b|_Y\|_h\overset{N}{\approx}\|b|_Z\|_h.
    \end{align*}
    Combining gives (\ref{eq17}).
\end{proof}

\section{The harmonic space}

Let $V$ be an $n$-dimensional vector space over $\C$, $\omega\in\bigwedge^{1,1}V$ a positive real (1,1)-form, $\nu\in\bigwedge^{r,r}V$ a real $(r,r)$-form.
\begin{definition}
    The pair $(\nu,\omega)$ is called a Hodge-Riemann pair on $V$ if for all nonnegative integers $p,q,k$ satisfying $p+q+k+r=n$, we have:
\begin{itemize}
    \item[(1)] (Hard Lefschetz)
    \[-\wedge\nu\omega^k:\bigwedge^{p,q}V\rightarrow\bigwedge^{n-q,n-p}V\] is an isomorphism.
    \item[(2)] (Hodge-Riemann relation) The following nondegenerate Hermitian form on $\bigwedge^{p,q}V$
    \[\left<\alpha,\beta\right>_{\nu\omega^k}:=(-1)^q\sqrt{-1}^{(p+q)^2}\int_V\alpha\wedge\bar{\beta}\wedge\nu\omega^k\]
    is positive definite on the subspace
    \[P^{p,q}_{(\nu,\omega)}:=\ker(-\wedge\nu\omega^{k+1}:\bigwedge^{p,q}V\rightarrow\bigwedge^{n-q+1,n-p+1}V).\]
    Here we define $\int_V\alpha=c$ for $\alpha\in\bigwedge^{n,n}V$ if $\alpha=c\frac{\omega^n}{n!}$.
\end{itemize}
\end{definition}

\begin{example}
    Let $\omega,\omega_1,\cdots,\omega_r\in\bigwedge^{1,1}V$ be positive real (1,1)-forms. Let $\nu=\omega_1\cdots\omega_r$. Then $(\nu,\omega)$ is a Hodge-Riemann pair.
\end{example}

Let $(\nu,\omega)$ be a Hodge-Riemann pair on $V$. For every $p+q\leq n-r$ we have a decomposition
\begin{align}\label{Lefschetz decomposition}
    \bigwedge^{p,q}V=\bigoplus_{i=0}^{\min(p,q)}\omega^i P_{(\nu,\omega)}^{p-i,q-i}.
\end{align}
In other words every $\alpha\in\bigwedge^{p,q}V$ can be written uniquely as
\[\alpha=\sum_{i=0}^{\min(p,q)}\omega^i\alpha_i\]
with $\alpha_i\in P^{p-i,q-i}$.

For nonnegative integers $p,q$ satisfying $p+q\leq n-r$ we define $*:\bigwedge^{p,q}V\rightarrow\bigwedge^{n-q,n-p}V$ by
\[*\left(\sum_{i=0}^{\min(p,q)}\omega^i\alpha_i\right):=\sum_{i=0}^{\min(p,q)}\sqrt{-1}^{-(p+q)^2}(-1)^{q-i}\frac{i!}{(k+i)!}\nu\omega^{k+i}\alpha_i,\]
where we denote $k=n-p-q-r$. We then define $*:\bigwedge^{p,q}V\rightarrow\bigwedge^{n-q,n-p}V$ for $p+q\geq n+r$ by requiring that $*^2=(-1)^{p+q}$.

Define
\[(\alpha,\beta):=\int_V\alpha\wedge*\bar{\beta}\]
for $\alpha,\beta\in\bigwedge^{p,q}V$ with $p+q\leq n-r$ or $p+q\geq n+r$. Then by the definition of a Hodge-Riemann pair, $(-,-)$ is a positive-definite Hermitian form on $\bigwedge^{p,q}V$ and makes the decomposition (\ref{Lefschetz decomposition}) an orthogonal decomposition. Moreover, $*:\bigwedge^{p,q}V\rightarrow\bigwedge^{n-q,n-p}V$ is an isometry. The induced Hermitian metric is denoted by $|\cdot|_{(\nu,\omega)}^2$.

\begin{remark}\label{remark: complex torus = linear}
    Let $X$ be a complex torus, then $H^{*,*}(X)\cong \bigwedge^{*,*}T^*_xX$ ($x\in X$ arbitrary) canonically. Since the definition of a Hodge-Riemann pair and the induced metric in either context depends only on the cohomology ring $H^{*,*}(X)$ or the exterior algebra $\bigwedge^{*,*}T_x^*X$, we see that (identifying them via the canonical isomorphism) $(\nu,\omega)$ is a Hodge-Riemann pair on $X$ if and only if it is a Hodge-Riemann pair on $T^*_xX$. If furthermore $\omega$ satisfies $\int_X\omega^n/n!=1$, then the metrics $\|\cdot\|_{(\nu,\omega)}$ and $|\cdot|_{(\nu,\omega)}$ coincide.
\end{remark}

Denote by $L:=-\wedge\omega:\bigwedge^{p,q}V\rightarrow\bigwedge^{p+1,q+1}V$. For $p+q\leq n-r$ define
\[\Lambda:=(-1)^{p+q}*L*:\bigwedge^{p,q}V\rightarrow\bigwedge^{p-1,q-1}V.\]
By direct computation one sees that
\[\Lambda\left(\sum_{i=0}^{\min(p,q)}\omega^i\alpha_i\right)=\sum_{i=1}^{\min(p,q)}i(k+i+1)\omega^{i-1}\alpha_i,\]
where $k=n-p-q-r$. So that $[\Lambda,L]=n-r-p-q$ on $\bigwedge^{p,q}V\quad (p+q\leq n-r-2)$, and $\alpha$ is primitive $\Leftrightarrow$ $\Lambda\alpha=0$.

Now let $X$ be a compact K\"ahlerian manifold of dimension $n$, $\omega$ a K\"ahler form on $X$, $\nu\in\mathcal{A}^{r,r}(X)$ a $d$-closed, real $(r,r)$-form. The pair $(\nu,\omega)$ is called a Hodge-Riemann pair if for every $x\in X$, $(\nu_x,\omega_x)$ is a Hodge-Riemann pair on $T^*_xX$.

Let $(\nu,\omega)$ be a Hodge-Riemann pair on $X$. The operators $L,\Lambda,*$ globalize in the obvious way. The inner product $(-,-)$ on $\mathcal{A}^{p,q}(X)$ is defined by
\begin{align}\label{global inner product}
    (\alpha,\beta):=\int_X(\alpha,\beta)\frac{\omega^n}{n!}=\int_X\alpha\wedge*\bar{\beta}.
\end{align}
We further define $\bar{\partial}^*:=-*\partial*,\ \partial^*:=-*\bar{\partial}*,\ d^*:=\partial^*+\bar{\partial}^*$ on $\mathcal{A}^{p,q}(X)$ for $p+q\leq n-r$ or $p+q\geq n+r+1$. By the stokes formula one obtain $(\partial^*\alpha,\beta)=(\alpha,\partial\beta)$ and $(\bar{\partial}^*\alpha,\gamma)=(\alpha,\bar{\partial}\gamma)$ for $\alpha\in\mathcal{A}^{p,q}(X),\ \beta\in\mathcal{A}^{p-1,q}(X),\ \gamma\in\mathcal{A}^{p,q-1}(X)$ with $p+q\leq n-r$ or $p+q\geq n+r+1$.
\begin{lemma}\label{basic identity}
    \begin{itemize}
        \item[(1)] If $p+q\leq n-r-1$, then
        \[\bar{\partial}^*=\sqrt{-1}[\partial,\Lambda],\ \partial^*=-\sqrt{-1}[\bar{\partial},\Lambda]\]
        on $\mathcal{A}^{p,q}(X)$.
        \item[(2)] If $p+q=n-r$, $\alpha\in\mathcal{A}^{p,q}(X)$,
        \[\begin{cases}
            \text{if }\partial\alpha=0,\text{ then }\bar{\partial}^*\alpha=\sqrt{-1}\partial\Lambda\alpha;\\
            \text{if }\bar{\partial}\alpha=0,\text{ then }\partial^*\alpha=-\sqrt{-1}\bar{\partial}\Lambda\alpha.
        \end{cases}\]
    \end{itemize}
\end{lemma}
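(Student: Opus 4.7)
These identities are the $(\nu, \omega)$-analogues of the classical K\"ahler identities $\bar{\partial}^* = \sqrt{-1}[\partial, \Lambda]$ and $\partial^* = -\sqrt{-1}[\bar{\partial}, \Lambda]$ on a K\"ahler manifold, so my plan is to adapt the standard pointwise proof. For part (1), both sides are first-order differential operators on $\mathcal{A}^{p,q}(X)$, so it suffices to verify the identity at each point $x \in X$. I would choose holomorphic coordinates at $x$ in which $\omega = \omega_0 + O(|z|^2)$, where $\omega_0$ is the standard K\"ahler form on $\C^n$ (possible by $d\omega = 0$), so that $\partial\omega(x) = \bar{\partial}\omega(x) = 0$.

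Next I would establish the identity on the flat linear model $V = \C^n$ carrying the constant Hodge-Riemann pair $(\nu_0, \omega_0) := (\nu(x), \omega(x))$. Since $\omega_0$ and $\nu_0$ are parallel on $V$, the operators $\partial, \bar{\partial}$ commute with wedging by $\omega_0$ and $\nu_0$. For a primitive form $\alpha \in P^{p,q}_{(\nu_0, \omega_0)}$, the explicit formula $*\alpha = c_0\, \nu_0\, \omega_0^k\, \alpha$ (with $k = n - p - q - r$ and $c_0 = \sqrt{-1}^{-(p+q)^2}(-1)^q$) then yields $\partial(*\alpha) = c_0\, \nu_0\, \omega_0^k\, (\partial \alpha)$. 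Computing the Lefschetz decomposition of $\partial\alpha$ and applying $*$ once more confirms the identity on primitives; extending via the Lefschetz decomposition and the $\mathfrak{sl}_2$-action generated by $L$ and $\Lambda$ gives the identity on all of $\bigwedge^{p,q} V$. This mirrors the classical flat-model proof with $\nu_0$ playing the role of an extra commuting factor.

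To globalize, I would show that the discrepancy between the pointwise operators on $X$ at $x$ and those on the linear model $(V, \nu_0, \omega_0)$ is linear in $\partial\omega, \bar{\partial}\omega, \partial\nu, \bar{\partial}\nu$ evaluated at $x$. The $\omega$-derivatives vanish by the normal-coordinate choice; the $\nu$-derivatives vanish identically on $X$, because $\nu$ is a real $d$-closed $(r,r)$-form, so $d\nu = \partial\nu + \bar{\partial}\nu = 0$ with the two pieces of independent bidegree forcing $\partial\nu = \bar{\partial}\nu = 0$ separately. Hence the identity at $x$ on $X$ reduces to the flat-model identity already proven.

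For part (2), when $p + q = n - r$ the composition $\Lambda\partial\alpha$ is not defined, since $\partial\alpha$ has total degree $n - r + 1$, outside $\Lambda$'s domain. Under the hypothesis $\partial\alpha = 0$ the offending term is vacuously absent, and the identity collapses to $\bar{\partial}^*\alpha = \sqrt{-1}\partial\Lambda\alpha$, proved by the same argument as above; the $\bar{\partial}\alpha = 0$ case is symmetric. I expect the main obstacle to be the globalization step: verifying rigorously that the first-order corrections coming from the $x$-dependence of $(\nu, \omega)$ reassemble precisely into $\partial\nu, \bar{\partial}\nu, \partial\omega, \bar{\partial}\omega$ terms, rather than into some other combination of partial derivatives. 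This requires careful bookkeeping of how the pointwise operator $*$, which depends nontrivially on $\nu$ and $\omega$ through the Lefschetz decomposition, contributes when differentiated; in the classical case only $\omega$ enters and this step is routine, but the presence of $\nu$ is the genuinely new feature to handle cleanly.
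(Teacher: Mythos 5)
Your plan is in the right spirit, but the globalization step you flag as ``the main obstacle'' is indeed a genuine gap, and it conceals a confusion that would derail the argument. You claim that after choosing normal coordinates (so the metric coefficients are constant to first order at $x$), the discrepancy with the flat model is controlled by $\partial\omega,\bar\partial\omega,\partial\nu,\bar\partial\nu$, that the $\omega$-terms vanish by the coordinate choice, and that the $\nu$-terms vanish because $d\nu=0$ forces $\partial\nu=\bar\partial\nu=0$. But these two vanishing claims live at different levels. ``$\partial\nu=\bar\partial\nu=0$'' is a statement about $\nu$ as a differential form: the $(r+1,r)$- and $(r,r+1)$-components of $d\nu$ vanish. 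It does \emph{not} say that the coefficients of $\nu$ are constant to first order; those coefficient derivatives are exactly what a flat-model reduction would have to kill, and they generically do not vanish. Since the primitive subbundles $P^{p-i,q-i}_{(\nu,\omega)}\subset\bigwedge^{p-i,q-i}T^*X$ (and hence the Lefschetz decomposition and the operator $*$) depend pointwise on $\nu$, this uncontrolled $x$-dependence of $\nu$ means your reduction to the constant-coefficient model at $x$ is not justified. So as written the proof does not close.

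The resolution --- and this is what the paper does --- is that normal coordinates are unnecessary and the flat model is a detour. Because $\omega$ is a K\"ahler form and $\nu$ is a $d$-closed real $(r,r)$-form, the \emph{form-level} identities $\partial\omega=\bar\partial\omega=\partial\nu=\bar\partial\nu=0$ hold everywhere, so $\partial$ and $\bar\partial$ commute with wedging by any $\nu\omega^j$. The paper exploits this directly: for $\alpha$ a smooth section of $P^{p-i,q-i}$, the identity $\nu\omega^{k+i+1}\alpha=0$ propagates to $\nu\omega^{k+i+1}\bar\partial\alpha=0$, which yields a Lefschetz decomposition $\bar\partial\alpha=\alpha_0+\omega\alpha_1$ with primitive pieces. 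Plugging the explicit formula $*(\omega^i\alpha)=\sqrt{-1}^{-(p+q)^2}(-1)^{q-i}\frac{i!}{(k+i)!}\nu\omega^{k+i}\alpha$ into $\bar\partial*(\omega^i\alpha)$ and into $*[\bar\partial,\Lambda](\omega^i\alpha)$ then gives the identity by a term-by-term algebraic comparison, with no choice of coordinates and no flat-model step. Part (2) is similar but the boundary degree $p+q=n-r$ requires using $\partial\alpha=0$ (resp. $\bar\partial\alpha=0$) to force the constraints $\alpha_i''+\alpha_{i+1}'=0$ and $\partial\alpha_0+\omega\alpha_1'=0$ on the decomposed derivative, which replaces the $\mathfrak{sl}_2$ bookkeeping you sketch. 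If you re-read your own observation ``$\partial\nu=\bar\partial\nu=0$'' as the form-level identity (which, note, holds for $\omega$ too, with no normal coordinates), you will find you can drop the flat-model reduction entirely and land on essentially the paper's computation.
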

\begin{proof}
We only prove the formulas $\partial^*=-\sqrt{-1}[\bar{\partial},\Lambda]$ and $\bar{\partial}^*\alpha=\sqrt{-1}\partial\Lambda\alpha$. The proof of the other two are similar. We first prove that $\partial^*(\omega^i\alpha)=-\sqrt{-1}[\bar{\partial},\Lambda](\omega^i\alpha)$, where $\alpha\in C^{\infty}(X,P^{p-i,q-i}),\ p+q\leq n-r-1$ ($P^{p,q}$ are now subbundles of $\bigwedge^{p,q}T^*X$). This is equivalent to
\begin{align}\label{eq1}
    \bar{\partial}*(\omega^i\alpha)=(-1)^{p+q+1}\sqrt{-1}*[\bar{\partial},\Lambda](\omega^i\alpha).
\end{align}
By assumption $\alpha\in C^{\infty}(X,P^{p-i,q-i})$, so $\nu\omega^{k+i+1}\alpha=0$, where $k=n-p-q-r$. We get $\nu\omega^{k+i+1}\bar{\partial}\alpha=0$. So we can write $\bar{\partial}\alpha=\alpha_0+\omega\alpha_1$, with $\alpha_0\in C^{\infty}(X,P^{p-i,q-i+1}),\ \alpha_1\in C^{\infty}(X,P^{p-i-1,q-i})$.
\begin{align*}
    \begin{split}
        &\bar{\partial}\Lambda(\omega^i\alpha)=\bar{\partial}(i(k+i+1)\omega^{i-1}\alpha)=i(k+i+1)\omega^{i-1}(\alpha_0+\omega\alpha_1),\\
        &\Lambda\bar{\partial}(\omega^i\alpha)=\Lambda(\omega^i\alpha_0+\omega^{i+1}\alpha_1)=i(k+i)\omega^{i-1}\alpha_0+(i+1)(k+i+1)\omega^i\alpha_1.\\
        &\Rightarrow[\bar{\partial},\Lambda](\omega^i\alpha)=i\omega^{i-1}\alpha_0-(k+i+1)\omega^i\alpha_1.\\
    \end{split}
\end{align*}
\begin{align*}
    \begin{split}
        &(-1)^{p+q+1}\sqrt{-1}*[\bar{\partial},\Lambda](\omega^i\alpha)\\
        =&(-1)^{p+q+1}\sqrt{-1}*(i\omega^{i-1}\alpha_0-(k+i+1)\alpha_1)\\
        =&(-1)^{p+q+1}\sqrt{-1}\bigg[i\sqrt{-1}^{-(p+q-1)^2}(-1)^{q-i+1}\frac{(i-1)!}{(k+i)!}\nu\omega^{k+i}\alpha_0\\
        &-(k+i+1)\sqrt{-1}^{-(p+q-1)^2}(-1)^{q-i}\frac{i!}{(k+i+1)!}\nu\omega^{k+i+1}\alpha_1\bigg]\\
        =&\sqrt{-1}^{-(p+q)^2}(-1)^{q-i}\left[\frac{i!}{(k+i)!}\nu\omega^{k+i}\alpha_0+\frac{i!}{(k+i)!}\nu\omega^{k+i+1}\alpha_1\right]\\
        =&\bar{\partial}*(\omega^i\alpha).
    \end{split}
\end{align*}
This proves (\ref{eq1}), thus the formula $\partial^*=-\sqrt{-1}[\bar{\partial},\Lambda]$ on $\mathcal{A}^{p,q}(X)$ with $p+q\leq n-r-1$.

Next we prove that if $p+q=n-r$, $\alpha=\sum_{i=0}^{\min(p,q)}\omega^i\alpha_i\in\mathcal{A}^{p,q}(X)$ satisfies $\partial\alpha=0$, then $\bar{\partial}^*\alpha=\sqrt{-1}\partial\Lambda\alpha$.

As before we can write $\partial\alpha_i=\alpha_i'+\omega\alpha_i''$ for $i\geq 1$, where $\alpha_i'\in C^{\infty}(X,P^{p-i+1,q-i}),\ \alpha_i''\in C^{\infty}(X,P^{p-i,q-i-1})$. Then the assumption $\partial\alpha=0$ implies
\begin{align}\label{eq2}
    0=\partial\alpha=\partial\alpha_0+\omega\alpha_1'+\sum_{i\geq 1}\omega^{i+1}(\alpha_i''+\alpha_{i+1}').
\end{align}
We have $\alpha_i''+\alpha_{i+1}'\in C^{\infty}(X,P^{p-i,q-i-1})$. Multiplying (\ref{eq2}) by $\nu\omega^l\ (l\geq 1)$ we get
\begin{align}\label{eq3}
    \nu\omega^{2l+1}\left[\sum_{i\geq l}\omega^{i-l}(\alpha_i''+\alpha_{i+1}')\right]=0.
\end{align}
Using the fact that $\nu\omega^{2l+1}:\bigwedge^{p-l,q-l-1}T^*X\rightarrow\bigwedge^{n-q+l+1,n-p+l}T^*X$ is an isomorphism, (\ref{eq3}) implies
\[\sum_{i\geq l}\omega^{i-l}(\alpha_i''+\alpha_{i+1}')=0\quad(\forall l\geq1).\]
We get
\begin{align}\label{eq4}
    \alpha_i''+\alpha_{i+1}'=0\quad (\forall i\geq 1)\quad\text{and}\quad\partial\alpha_0+\omega\alpha_1'=0.
\end{align}
Now we compute
\begin{align*}
    \begin{split}
        &\partial*\alpha\\
        =&\partial\left(\sum_{i\geq 0}(-1)^{q-i}\sqrt{-1}^{-(p+q)^2}\nu\omega^i\alpha_i\right)\\
        =&(-1)^q\sqrt{-1}^{-(p+q)^2}\left[\nu\partial\alpha_0+\sum_{i\geq 1}(-1)^i(\nu\omega^i\alpha_i'+\nu\omega^{i+1}\alpha_i'')\right],
    \end{split}
\end{align*}
\begin{align*}
    \begin{split}
        &\sqrt{-1}*\partial\Lambda\alpha\\
        =&\sqrt{-1}*\partial\left[\sum_{i\geq 1}i(i+1)\omega^{i-1}\alpha_i\right]\\
        =&\sqrt{-1}*\left[\sum_{i\geq 1}\left(i(i+1)\omega^{i-1}\alpha_i'+i(i+1)\omega^i\alpha_i''\right)\right]\\
        =&\sqrt{-1}\sqrt{-1}^{-(p+q-1)^2}(-1)^q\left[\sum_{i\geq 1}\left((-1)^i(i+1)\nu\omega^i\alpha_i'+(-1)^{i+1}i\nu\omega^{i+1}\alpha_i''\right)\right]\\
        =&\sqrt{-1}^{-(p+q)^2}(-1)^p\left[\sum_{i\geq 1}(-1)^i(\nu\omega^i\alpha_i'+\nu\omega^{i+1}\alpha_i'')-\nu\omega\alpha_1'+\sum_{i\geq 1}(-1)^{i+1}(i+1)\nu\omega^{i+1}(\alpha_i''+\alpha_{i+1}')\right]\\
        =&\sqrt{-1}^{-(p+q)^2}(-1)^p\left[\sum_{i\geq 1}(-1)^i(\nu\omega^i\alpha_i'+\nu\omega^{i+1}\alpha_i'')+\nu\partial\alpha_0\right],
    \end{split}
\end{align*}
where in the last step we have used equation (\ref{eq4}).

Comparing the two equations above we see that $\partial*\alpha=(-1)^{p+q}\sqrt{-1}*\partial\Lambda\alpha$. Equivalently, $\bar{\partial}^*\alpha=\sqrt{-1}\partial\Lambda\alpha$, which is what we want.
\end{proof}
Define the operators $\Delta_\partial:=\partial\partial^*+\partial^*\partial$, $\Delta_{\bar{\partial}}:=\bar{\partial}\bar{\partial}^*+\bar{\partial}^*\bar{\partial}$, $\Delta_d=dd^*+d^*d$ on $\mathcal{A}^{p,q}(X)$ for $p+q\leq n-r-1$ or $p+q\geq n+r+1$. The following lemma  follows from the previous lemma by simple computation.
\begin{lemma}\label{kahler identies}
    \[\Delta_\partial=\Delta_{\bar{\partial}}=\frac{1}{2}\Delta_d,\ [\Delta_d,*]=0,\ \partial^*\bar{\partial}+\bar{\partial}\partial^*=0,\ \bar{\partial}^*\partial+\partial\bar{\partial}^*=0\]
    on $\mathcal{A}^{p,q}(X)$ with $p+q\leq n-r-1$ or $p+q\geq n+r+1$;
    \[[\Lambda,\Delta_d]=0\]
    on $\mathcal{A}^{p,q}(X)$ with $p+q\leq n-r-1$;
    \[[L,\Delta_d]=0\]
    on $\mathcal{A}^{p,q}(X)$ with $p+q\leq n-r-3$ or $p+q\geq n+r+1$.
\end{lemma}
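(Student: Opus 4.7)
The strategy is to derive every identity by formal manipulation of the commutator formulas in Lemma \ref{basic identity}, in close analogy with the classical Kähler identities, paying careful attention to which degree ranges each formula covers.

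First I would verify the two anti-commutators $\partial^*\bar{\partial} + \bar{\partial}\partial^* = 0$ and $\bar{\partial}^*\partial + \partial\bar{\partial}^* = 0$ on $\mathcal{A}^{p,q}(X)$ with $p+q \le n-r-1$. Substituting $\bar{\partial}^* = \sqrt{-1}[\partial,\Lambda]$ from Lemma \ref{basic identity}(1) and using $\partial^2 = 0$,
\[
\bar{\partial}^*\partial + \partial\bar{\partial}^* = \sqrt{-1}\bigl([\partial,\Lambda]\partial + \partial[\partial,\Lambda]\bigr) = \sqrt{-1}\bigl(\partial\Lambda\partial - \Lambda\partial^2 + \partial^2\Lambda - \partial\Lambda\partial\bigr) = 0,
\]
and the same with $\partial\leftrightarrow\bar{\partial}$. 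On the strict interior $p+q \le n-r-2$ this is immediate. At the boundary $p+q = n-r-1$, the evaluation $\bar{\partial}^*(\partial\alpha)$ falls outside the range of Lemma \ref{basic identity}(1); but $\partial(\partial\alpha) = 0$, so Lemma \ref{basic identity}(2) supplies the identical formula $\bar{\partial}^*(\partial\alpha) = \sqrt{-1}\partial\Lambda(\partial\alpha)$. The analogous argument, using the second case of Lemma \ref{basic identity}(2) applied to $\bar{\partial}\alpha$, closes the second anti-commutator.

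Combining the anti-commutators with the expansion of $\Delta_d = (\partial+\bar{\partial})(\partial^*+\bar{\partial}^*) + (\partial^*+\bar{\partial}^*)(\partial+\bar{\partial})$ yields $\Delta_d = \Delta_\partial + \Delta_{\bar{\partial}}$, so the remaining content of the first block is $\Delta_\partial = \Delta_{\bar{\partial}}$. On $p+q \le n-r-2$ this is the classical Kähler computation: plugging both formulas from Lemma \ref{basic identity}(1) into both Laplacians, all mixed terms cancel and the rest collects to
\[
\Delta_\partial - \Delta_{\bar{\partial}} = -\sqrt{-1}(\partial\bar{\partial}+\bar{\partial}\partial)\Lambda + \sqrt{-1}\,\Lambda(\partial\bar{\partial}+\bar{\partial}\partial) = 0.
\]
At the boundary $p+q = n-r-1$ the delicate terms are $\partial^*\partial\alpha$ and $\bar{\partial}^*\bar{\partial}\alpha$, since neither $\partial\alpha$ nor $\bar{\partial}\alpha$ is automatically in the kernel of the operator required by Lemma \ref{basic identity}(2); they must be handled via the defining formulas $\partial^* = -*\bar{\partial}*$ and $\bar{\partial}^* = -*\partial*$ together with the primitive decomposition of $\alpha$, which will let one match the boundary computation to the interior one. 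The identity $[\Delta_d,*] = 0$ then follows immediately from these same defining formulas and $*^2 = (-1)^{p+q}$ by direct substitution in $dd^* + d^*d$.

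Finally, for $[L,\Delta_d] = 0$ (on $p+q \le n-r-3$) and $[\Lambda,\Delta_d] = 0$ (on $p+q \le n-r-1$): since $L$ commutes strictly with $\partial$ and $\bar{\partial}$, one has $[L,\Delta_d] = d[L,d^*] + [L,d^*]d$. The Jacobi identity applied to $[L,\bar{\partial}^*] = \sqrt{-1}[L,[\partial,\Lambda]]$, together with $[L,\partial]=0$ and $[L,\Lambda] = -(n-r-p-q)$ (from the formula displayed before Lemma \ref{basic identity}), yields $[L,\bar{\partial}^*] = -\sqrt{-1}\partial$ and $[L,\partial^*] = \sqrt{-1}\bar{\partial}$, so $[L,d^*] = -\sqrt{-1}(\partial-\bar{\partial})$; substituting and using $\partial^2 = \bar{\partial}^2 = 0$ gives $[L,\Delta_d] = 0$ wherever the composition is defined, which forces $p+q \le n-r-3$. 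The argument for $[\Lambda,\Delta_d]$ is the mirror computation, with the slightly larger range allowed because $\Lambda$ shifts the degree downward. The main obstacle is the boundary case $p+q = n-r-1$ in the step $\Delta_\partial = \Delta_{\bar{\partial}}$, where Lemma \ref{basic identity}(1) narrowly fails on $\partial\alpha$ and Lemma \ref{basic identity}(2) does not apply to the combinations that arise; resolving it requires working through the Lefschetz decomposition and the explicit action of $*$ on primitive components.
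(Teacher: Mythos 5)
Your handling of the anticommutators $\partial^*\bar{\partial}+\bar{\partial}\partial^*=0$ and $\bar{\partial}^*\partial+\partial\bar{\partial}^*=0$ is correct and in fact supplies a detail the paper elides: you explicitly invoke Lemma \ref{basic identity}(2) at the boundary degree $p+q=n-r-1$, noting that $\partial(\partial\alpha)=0$ (resp.\ $\bar\partial(\bar\partial\alpha)=0$) makes the conditional formula applicable. That is exactly the right way to close that case.

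There are, however, two real gaps. First, and most importantly, you never address the range $p+q\ge n+r+1$, which the lemma explicitly covers and where Lemma \ref{basic identity}(1) is not available, so the direct commutator computations simply do not apply. The paper handles this by conjugating with the Hodge star: from $\partial^*=-*\bar\partial*$ and $\bar\partial^*=-*\partial*$ one gets $*\Delta_{\bar\partial}=\Delta_\partial*$, which transports $\Delta_\partial=\Delta_{\bar\partial}$ and the anticommutators from $p+q\le n-r-1$ to $p+q\ge n+r+1$; similarly, $[L,\Delta_d]=0$ on $p+q\ge n+r+1$ is obtained by taking $*$ of $[\Lambda,\Delta_d]=0$ on $p+q\le n-r-1$, not by any direct Jacobi computation in the upper range. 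Your ``$[\Delta_d,*]=0$ by direct substitution'' also implicitly uses $\Delta_\partial=\Delta_{\bar\partial}$ in \emph{both} ranges, so it cannot precede the extension. Second, for $[\Lambda,\Delta_d]=0$ you wave at a ``mirror computation,'' but the paper's actual argument is a short and different one: it uses $[\Lambda,\partial^*]=0$ (which you never derive), reducing $[\Lambda,\Delta_\partial]$ to $\pm\sqrt{-1}\bigl(\bar\partial^*\partial^*+\partial^*\bar\partial^*\bigr)$, which vanishes because it is a $*$-conjugate of $\bar\partial\partial+\partial\bar\partial=0$; this holds on the whole range $p+q\le n-r-1$ without a boundary issue, which is why the paper then gets $[L,\Delta_d]=0$ on the upper range for free by applying $*$. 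Your Jacobi route, as you set it up, would have to be re-run at the extreme degree $p+q=n-r-3$ where $[L,d^*]d\alpha$ involves the operator on a form of degree $n-r-2$, and there you need the conditional variant of the commutator identity (this is exactly the role of the second part of the paper's equation (\ref{eq5})); you do not address this. Finally, you flag the degree-$(n-r-1)$ case of $\Delta_\partial=\Delta_{\bar\partial}$ as unresolved; the paper is equally terse here, so this is a shared omission rather than a divergence, but you should be aware you haven't closed it.
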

\begin{proof}
    The formulas $\Delta_\partial=\Delta_{\bar{\partial}}=\frac{1}{2}\Delta_d,\ \partial^*\bar{\partial}+\bar{\partial}\partial^*=0,\ \bar{\partial}^*\partial+\partial\bar{\partial}^*=0$ are obtained easily by computation using lemma \ref{basic identity} in the range $p+q\leq n-r-1$. Then one use the obvious identity $*\Delta_{\bar{\partial}}=\Delta_{\partial}*$ to extend the formulas to the range $p+q\geq n+r+1$. As for $[\Lambda,\Delta_d]=0$, we compute
    \[[\Lambda,\Delta_\partial]=[\Lambda,\partial]\partial^*+\partial^*[\Lambda,\partial]=-\sqrt{-1}(\bar{\partial}^*\partial^*+\partial^*\bar{\partial}^*)=\pm\sqrt{-1}*(\bar{\partial}\partial+\partial\bar{\partial})*=0.\]
    Taking $*$ we get the formula $[L,\Delta_d]=0$ in the range $p+q\geq n+r+1$. Finally, the formula $[L,\Delta_d]=0$ in the range $p+q\leq n-r-3$ follows easily from the following identities:
    \begin{align}\label{eq5}
        \begin{split}
            &[L,\partial^*]=-\sqrt{-1}\bar{\partial}\text{ on }\mathcal{A}^{p,q}(X)\text{ with }p+q\leq n-r-3;\\
            &[L,\partial^*]\alpha=0\text{ if }\alpha\in\mathcal{A}^{p,q}(X)\text{ with }p+q=n-r-2\text{ satisfies }\bar{\partial}\alpha=0.
        \end{split}
    \end{align}
    To prove (\ref{eq5}), we use lemma \ref{basic identity}. If $p+q\leq n-r-3$,
    \[[L,\partial^*]=-\sqrt{-1}[L,[\Lambda,\bar{\partial}]]=\sqrt{-1}[\bar{\partial},[L,\Lambda]]=-\sqrt{-1}\bar{\partial}.\]
    If $p+q=n-r-2$, $\alpha\in\mathcal{A}^{p,q}(X)$ satisfies $\bar{\partial}\alpha=0$, then $\bar{\partial}L\alpha=0$,
    \[[L,\partial^*]\alpha=L\partial^*\alpha-\partial^*L\alpha=\sqrt{-1}L[\Lambda,\bar{\partial}]\alpha+\sqrt{-1}\bar{\partial}\Lambda L\alpha=0.\]
\end{proof}
\begin{definition}
    Let $(\nu,\omega)$ be a Hodge-Riemann pair on $X$. For $p+q\leq n-r$ or $p+q\geq n+r$, define
    \[\mathcal{H}^{p,q}(X):=\left\{\alpha\in\mathcal{A}^{p,q}(X)\Big|d\alpha=0,\ (\alpha,\bar{\partial}\beta)=0,\ (\alpha,\partial\gamma)=0,\ \forall\beta\in\mathcal{A}^{p,q-1}(X),\ \gamma\in\mathcal{A}^{p-1,q}(X)\right\},\]
    where $(-,-)$ is the inner product induced by $(\nu,\omega)$ (see (\ref{global inner product})).
\end{definition}
$\mathcal{H}^{p,q}(X)$ is called the space of harmonic $(p,q)$-forms (with respect to $(\nu,\omega)$). It is the space of $d$-closed $(p,q)$-forms that are orthogonal to $\bar{\partial}\mathcal{A}^{p,q-1}(X)+\partial\mathcal{A}^{p-1,q}(X)$ (with respect to the inner product $(-,-)$ induced by $(\nu,\omega)$). (Note that in order to define the harmonic space in bidegree $(p,q)$, one only need to specify an inner product on $\bigwedge^{p,q}T^*X$.)
\begin{lemma}\label{Hodge star characterization of harmonic space}
    \[\mathcal{H}^{p,q}(X)=\left\{\alpha\in\mathcal{A}^{p,q}(X)\big|d\alpha=0,\ d*\alpha=0\right\}.\]
    ($p+q\leq n-r$ or $p+q\geq n+r$)
\end{lemma}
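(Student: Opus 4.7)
The plan is to prove both inclusions by converting the orthogonality conditions defining $\mathcal{H}^{p,q}(X)$ into the pair of differential conditions $\partial{*\alpha}=0$ and $\bar{\partial}{*\alpha}=0$ via Stokes' theorem. Since by hypothesis $p+q\leq n-r$ or $p+q\geq n+r$, $*\alpha$ is well-defined in both ranges (on the upper range via the convention $*^2=(-1)^{p+q}$), so the argument runs uniformly without case-splitting.

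For the key computation, fix $\beta\in\mathcal{A}^{p,q-1}(X)$ and consider the $(n-1,n)$-form $\bar{\beta}\wedge*\alpha$. For bidegree reasons only two of the four terms of its exterior derivative survive in top degree:
\[d(\bar{\beta}\wedge*\alpha)=\partial\bar{\beta}\wedge*\alpha+(-1)^{p+q-1}\bar{\beta}\wedge\partial{*\alpha}.\]
Stokes then yields $\int_X\partial\bar{\beta}\wedge*\alpha=(-1)^{p+q}\int_X\bar{\beta}\wedge\partial{*\alpha}$. Unwinding the definition of the inner product gives $(\alpha,\bar{\partial}\beta)=\int_X\alpha\wedge*\overline{\bar{\partial}\beta}=\int_X\alpha\wedge*\partial\bar{\beta}$, and the pointwise identity $\phi\wedge*\psi=\psi\wedge*\phi$ for $\phi\in\bigwedge^{p,q}V,\ \psi\in\bigwedge^{q,p}V$ (a consequence of the Hermitian symmetry of the pointwise inner product on $\bigwedge^{p,q}V$) turns this into $(\alpha,\bar{\partial}\beta)=\int_X\partial\bar{\beta}\wedge*\alpha$. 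Combining,
\[(\alpha,\bar{\partial}\beta)=(-1)^{p+q}\int_X\bar{\beta}\wedge\partial{*\alpha}.\]
An analogous Stokes computation applied to $\bar{\gamma}\wedge*\alpha$ for $\gamma\in\mathcal{A}^{p-1,q}(X)$ gives $(\alpha,\partial\gamma)=(-1)^{p+q}\int_X\bar{\gamma}\wedge\bar{\partial}{*\alpha}$.

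By the standard nondegeneracy of the wedge pairing of smooth forms of complementary bidegree (testing against local bump-function constructions), the orthogonality of $\alpha$ to $\bar{\partial}\mathcal{A}^{p,q-1}(X)$ and to $\partial\mathcal{A}^{p-1,q}(X)$ is equivalent to $\partial{*\alpha}=0$ and $\bar{\partial}{*\alpha}=0$ respectively. Since these two pieces of $d{*\alpha}$ occupy disjoint bidegrees, their simultaneous vanishing is equivalent to $d{*\alpha}=0$. Together with $d\alpha=0$, both inclusions follow. The main obstacle is purely sign bookkeeping — the $\sqrt{-1}^{-(p+q)^2}$ factor in the definition of $*$, the swap-of-wedge signs, and the convention $*^2=(-1)^{p+q}$ on the upper range must combine consistently across both ranges — but the paper's conventions are set up precisely so that the argument runs uniformly. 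In particular, no special treatment of the boundary bidegrees $p+q=n-r$ or $p+q=n+r$ (where $\bar{\partial}^*,\partial^*$ are not directly defined) is required, which is the main technical advantage of proceeding through Stokes rather than through the formal adjoints.
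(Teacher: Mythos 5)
Your proof is correct and follows exactly the route the paper intends: the paper's entire proof is the single line ``Use the formula $(\alpha,\beta)=\int_X\alpha\wedge*\bar\beta$ and integration by parts,'' and your computation is precisely that Stokes argument carried out in detail, with the correct observation that it applies uniformly at the boundary bidegrees where the formal adjoints are not defined. The only point left slightly implicit is that deducing $\phi\wedge*\psi=\psi\wedge*\phi$ from Hermitian symmetry also uses that $*$ commutes with complex conjugation, but this is a straightforward check from the paper's formula for $*$ and is part of the sign bookkeeping you already acknowledge.
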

\begin{proof}
    Use (\ref{global inner product}) and integration by part.
\end{proof}
\begin{corollary}\label{Hodge star preserves harmonic spaces}
    \[*\mathcal{H}^{p,q}(X)=\mathcal{H}^{n-q,n-p}(X).\]
    ($p+q\leq n-r$ or $p+q\geq n+r$)
\end{corollary}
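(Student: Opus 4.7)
The plan is to deduce the corollary directly from the characterization
\[\mathcal{H}^{p,q}(X)=\{\alpha\in\mathcal{A}^{p,q}(X) : d\alpha=0,\ d*\alpha=0\}\]
established in lemma \ref{Hodge star characterization of harmonic space}. This characterization is manifestly symmetric in $\alpha$ and $*\alpha$, so all that is really needed is the involutive nature of the Hodge star operator.

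First I would recall that by construction, $*:\bigwedge^{p,q}V\to\bigwedge^{n-q,n-p}V$ was defined in the range $p+q\leq n-r$ by the explicit formula on the Lefschetz decomposition, and then extended to the range $p+q\geq n+r$ precisely by imposing $*^2=(-1)^{p+q}$. This identity therefore holds pointwise, and hence on $\mathcal{A}^{p,q}(X)$, in both ranges; in particular $*$ is a linear bijection between $\mathcal{A}^{p,q}(X)$ and $\mathcal{A}^{n-q,n-p}(X)$, sending the former range to the latter and vice versa.

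Given $\alpha\in\mathcal{H}^{p,q}(X)$, lemma \ref{Hodge star characterization of harmonic space} gives $d\alpha=0$ and $d*\alpha=0$. To check that $*\alpha\in\mathcal{H}^{n-q,n-p}(X)$ I would apply the same lemma in bidegree $(n-q,n-p)$: the condition $d(*\alpha)=0$ is exactly the second condition on $\alpha$, while $d*(*\alpha)=d((-1)^{p+q}\alpha)=\pm d\alpha=0$ is exactly the first. This yields $*\mathcal{H}^{p,q}(X)\subset\mathcal{H}^{n-q,n-p}(X)$; the reverse inclusion follows by running the same argument with the roles of $(p,q)$ and $(n-q,n-p)$ exchanged, and combining the two with $*^2=\pm\mathrm{id}$ gives the equality. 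There is no real obstacle: the statement is a purely formal consequence of the preceding lemma together with the involutive character of $*$ that was built into its very definition.
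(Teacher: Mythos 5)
Your argument is correct and is exactly the intended deduction: the paper leaves the corollary unproved because, as you observe, it is an immediate consequence of lemma~\ref{Hodge star characterization of harmonic space} together with the relation $*^2=(-1)^{p+q}$ that is built into the definition of the extended Hodge star. Nothing to add.
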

\begin{lemma}\label{Laplacian characterization of harmonic space}
    If $p+q\leq n-r-1$ or $p+q\geq n+r+1$, then
    \begin{align*}
        \begin{split}
            \mathcal{H}^{p,q}(X)=\ker\Delta_d=&\left\{\alpha\in\mathcal{A}^{p,q}(X)\big|\partial\alpha=0,\ \partial^*\alpha=0\right\}\\
            =&\left\{\alpha\in\mathcal{A}^{p,q}(X)\big|\bar{\partial}\alpha=0,\ \bar{\partial}^*\alpha=0\right\}\\
            =&\left\{\alpha\in\mathcal{A}^{p,q}(X)\big|d\alpha=0,\ d^*\alpha=0\right\}.
        \end{split}
    \end{align*}
\end{lemma}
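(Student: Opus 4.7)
The plan is to derive this characterization as a formal consequence of Lemma \ref{Hodge star characterization of harmonic space} and the Kähler-type identities of Lemma \ref{kahler identies}; essentially all the heavy lifting has already been done in those two lemmas.

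First I would identify $\mathcal{H}^{p,q}(X)$ with $\{\alpha : d\alpha = 0,\ d^*\alpha = 0\}$. From the definitions in the paper, $d^* = \partial^* + \bar{\partial}^* = -*\bar{\partial}* - *\partial* = -*d*$, and since $*$ is a bijection on the relevant bidegrees (with $** = \pm\,\mathrm{id}$), the condition $d*\alpha = 0$ appearing in Lemma \ref{Hodge star characterization of harmonic space} is equivalent to $d^*\alpha = 0$. This takes care of the last equality in the lemma.

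Next I would split this $(d,d^*)$ description into separate $\partial$- and $\bar{\partial}$-pieces by bidegree. Writing $d\alpha = \partial\alpha + \bar{\partial}\alpha$, the two summands sit in $\bigwedge^{p+1,q}$ and $\bigwedge^{p,q+1}$ respectively, so $d\alpha = 0 \iff \partial\alpha = \bar{\partial}\alpha = 0$; the same bidegree argument applied to $d^*\alpha = \partial^*\alpha + \bar{\partial}^*\alpha$ gives $d^*\alpha = 0 \iff \partial^*\alpha = \bar{\partial}^*\alpha = 0$. This yields the immediate containments
\[\{d\alpha = d^*\alpha = 0\} \subseteq \{\partial\alpha = \partial^*\alpha = 0\} \quad\text{and}\quad \{d\alpha = d^*\alpha = 0\} \subseteq \{\bar{\partial}\alpha = \bar{\partial}^*\alpha = 0\}.\]

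For the reverse containments and the $\ker\Delta_d$ description, I would use the adjointness $(\partial^*\beta,\gamma) = (\beta,\partial\gamma)$ and $(\bar{\partial}^*\beta,\gamma) = (\beta,\bar{\partial}\gamma)$ (which holds in exactly the bidegree range in question, as noted in the paper just before Lemma \ref{basic identity}) to compute
\[(\Delta_\partial \alpha, \alpha) = \|\partial\alpha\|^2 + \|\partial^*\alpha\|^2, \qquad (\Delta_{\bar{\partial}} \alpha, \alpha) = \|\bar{\partial}\alpha\|^2 + \|\bar{\partial}^*\alpha\|^2.\]
Hence $\ker \Delta_\partial = \{\partial\alpha = \partial^*\alpha = 0\}$ and similarly for $\bar{\partial}$. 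The identity $\Delta_\partial = \Delta_{\bar{\partial}} = \tfrac{1}{2}\Delta_d$ from Lemma \ref{kahler identies} collapses these four kernels into a single common set, which then coincides with $\{d\alpha = d^*\alpha = 0\} = \mathcal{H}^{p,q}(X)$ by the previous step. The hypothesis $p+q \leq n-r-1$ or $p+q \geq n+r+1$ is precisely what is needed for $\Delta_d$ to be defined on $\mathcal{A}^{p,q}(X)$ and for the Kähler identities of Lemma \ref{kahler identies} to apply; no step poses a serious obstacle once those lemmas are in hand, and the only care required is bookkeeping the bidegree ranges where each ingredient is available.
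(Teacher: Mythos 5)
Your proof is correct and follows the same essential route as the paper's: translate $\mathcal{H}^{p,q}(X)=\{d\alpha=0,\ d*\alpha=0\}$ from Lemma \ref{Hodge star characterization of harmonic space} into $\{d\alpha=0,\ d^*\alpha=0\}$ via $d^*=-*d*$, identify $\ker\Delta_d$ with the three $(\partial,\partial^*)$-, $(\bar\partial,\bar\partial^*)$-, $(d,d^*)$-kernels by the positivity $(\Delta\alpha,\alpha)=\|\cdot\|^2+\|\cdot\|^2$ together with $\Delta_\partial=\Delta_{\bar\partial}=\tfrac12\Delta_d$ from Lemma \ref{kahler identies}, and split $d,d^*$ into their $\partial,\bar\partial$ components by bidegree. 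The paper's argument is terser but relies on exactly the same ingredients.
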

\begin{proof}
    $\ker\Delta_d$ is clear the same as the last three spaces. (For example, $\Delta_d\alpha=0\Rightarrow(\partial\partial^*\alpha+\partial^*\partial\alpha,\alpha)=0\Rightarrow(\partial^*\alpha,\partial^*\alpha)=(\partial\alpha,\partial\alpha)=0\Rightarrow\partial\alpha=0,\ \partial^*\alpha=0$.) The last space is the same as $\mathcal{H}^{p,q}(X)$ by lemma \ref{Hodge star characterization of harmonic space}.
\end{proof}
$\Delta_{\bar{\partial}}$ and $\Delta_{\partial}$ are second order elliptic self-adjoint (with respect to (-,-)) operators on $\mathcal{A}^{p,q}(X)$ ($p+q\leq n-r-1$ or $p+q\geq n+r+1$). By lemma \ref{kahler identies}, so is $\Delta_d$. By standard theory on elliptic differential operators, we have $\dim\mathcal{H}^{p,q}(X)<\infty$ and
\begin{align}\label{harmonic decomposition}
    \mathcal{A}^{p,q}(X)=\mathcal{H}^{p,q}(X)\oplus\Delta_d\mathcal{A}^{p,q}(X)
\end{align}
(orthogonal direct sum), where $p+q\leq n-r-1$ or $p+q\geq n+r+1$.
\begin{lemma}\label{harmonic space at the dangerous degree}
    If $p+q=n-r$, then
    \begin{align*}
        \begin{split}
            \mathcal{H}^{p,q}(X)=&\left\{\alpha\in\mathcal{A}^{p,q}(X)\big|d\alpha=0,\ \partial\bar{\partial}\Lambda\alpha=0\right\}\\
            =&\left\{\alpha\in\mathcal{A}^{p,q}(X)\big|d\alpha=0,\ \partial^*\bar{\partial}^*\alpha=0\right\}
        \end{split}
    \end{align*}
    If we denote the space of $d$-closed $(p,q)$-forms by $\mathcal{Z}^{p,q}(X)$, then
    \[\mathcal{Z}^{p,q}(X)=\mathcal{H}^{p,q}(X)\oplus\partial\bar{\partial}\mathcal{A}^{p-1,q-1}(X)\]
    (orthogonal direct sum).
\end{lemma}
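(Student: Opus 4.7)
The plan is to extract both characterizations of $\mathcal{H}^{p,q}(X)$ directly from the definition via lemma \ref{basic identity}(2), and then obtain the orthogonal direct sum decomposition by combining the second characterization with the classical $\partial\bar{\partial}$-lemma on compact K\"ahlerian manifolds. At the borderline degree $p+q=n-r$ the operators $\partial^*,\bar{\partial}^*$ and the adjoint relations $(\partial^*\alpha,\beta)=(\alpha,\partial\beta)$, $(\bar{\partial}^*\alpha,\gamma)=(\alpha,\bar{\partial}\gamma)$ are still valid, so $\alpha\in\mathcal{H}^{p,q}(X)$ is equivalent to $d\alpha=0,\ \partial^*\alpha=0,\ \bar{\partial}^*\alpha=0$. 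Under $d\alpha=0$, lemma \ref{basic identity}(2) rewrites these as $\partial^*\alpha=-\sqrt{-1}\bar{\partial}\Lambda\alpha$ and $\bar{\partial}^*\alpha=\sqrt{-1}\partial\Lambda\alpha$, providing the bridge between harmonicity and the conditions appearing in the lemma.

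For the first equality, the forward implication is immediate. Conversely, given $d\alpha=0$ and $\partial\bar{\partial}\Lambda\alpha=0$, the calculations $\partial\partial^*\alpha=-\sqrt{-1}\partial\bar{\partial}\Lambda\alpha=0$ and $\bar{\partial}\bar{\partial}^*\alpha=\sqrt{-1}\bar{\partial}\partial\Lambda\alpha=-\sqrt{-1}\partial\bar{\partial}\Lambda\alpha=0$, combined with the adjoint pairings, give $\|\partial^*\alpha\|^2=(\alpha,\partial\partial^*\alpha)=0$ and $\|\bar{\partial}^*\alpha\|^2=(\alpha,\bar{\partial}\bar{\partial}^*\alpha)=0$, so $\alpha\in\mathcal{H}^{p,q}(X)$. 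For the second equality, use the anticommutation $\partial^*\bar{\partial}^*=-\bar{\partial}^*\partial^*$ (dual to $\partial\bar{\partial}=-\bar{\partial}\partial$) together with the pairings $(\partial^*\bar{\partial}^*\alpha,\Lambda\alpha)=(\bar{\partial}^*\alpha,\partial\Lambda\alpha)=\sqrt{-1}\|\partial\Lambda\alpha\|^2$ and $(\bar{\partial}^*\partial^*\alpha,\Lambda\alpha)=(\partial^*\alpha,\bar{\partial}\Lambda\alpha)=-\sqrt{-1}\|\bar{\partial}\Lambda\alpha\|^2$; the vanishing of $\partial^*\bar{\partial}^*\alpha$ then forces $\partial\Lambda\alpha=\bar{\partial}\Lambda\alpha=0$, placing $\alpha$ in $\mathcal{H}^{p,q}(X)$.

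For the direct sum decomposition, orthogonality is immediate from $\partial\bar{\partial}\mathcal{A}^{p-1,q-1}(X)\subset\bar{\partial}\mathcal{A}^{p,q-1}(X)$ and the definition of $\mathcal{H}^{p,q}(X)$. The second characterization also identifies $\mathcal{H}^{p,q}(X)$ with the orthogonal complement of $\partial\bar{\partial}\mathcal{A}^{p-1,q-1}(X)$ inside $\mathcal{Z}^{p,q}(X)$: an element $\alpha\in\mathcal{Z}^{p,q}(X)$ is perpendicular to $\partial\bar{\partial}\mathcal{A}^{p-1,q-1}(X)$ iff $\bar{\partial}^*\partial^*\alpha=0$, which is exactly the harmonicity condition. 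The spanning statement --- every $d$-closed $\alpha$ at degree $n-r$ admits a smooth decomposition $h+\partial\bar{\partial}\gamma$ --- is the main analytic obstacle. The classical $\partial\bar{\partial}$-lemma on compact K\"ahlerian manifolds gives $\mathcal{Z}^{p,q}(X)\cap d\mathcal{A}^{p+q-1}(X)=\partial\bar{\partial}\mathcal{A}^{p-1,q-1}(X)$, hence $\mathcal{Z}^{p,q}(X)/\partial\bar{\partial}\mathcal{A}^{p-1,q-1}(X)\cong H^{p,q}(X)$ is finite-dimensional; combined with elliptic regularity for the fourth-order Bott-Chern type Laplacian built from $\partial^*\bar{\partial}^*\bar{\partial}\partial$ and its relatives (with respect to the $(\nu,\omega)$-inner product), this produces closedness of $\partial\bar{\partial}\mathcal{A}^{p-1,q-1}(X)$ in $\mathcal{A}^{p,q}(X)$ and a smooth $(\nu,\omega)$-harmonic representative in every class of $H^{p,q}(X)$, completing the orthogonal decomposition.
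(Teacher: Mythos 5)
Your treatment of the two characterizations of $\mathcal{H}^{p,q}(X)$ is correct. For the first one your argument coincides with the paper's. For the second, you deduce both $\partial\Lambda\alpha=0$ and $\bar{\partial}\Lambda\alpha=0$ directly from the two pairings $(\partial^*\bar{\partial}^*\alpha,\Lambda\alpha)$ and $(\bar{\partial}^*\partial^*\alpha,\Lambda\alpha)$ together with the anticommutation $\partial^*\bar{\partial}^*=-\bar{\partial}^*\partial^*$; the paper instead extracts only $\partial\Lambda\alpha=0$ and then feeds $\partial\bar{\partial}\Lambda\alpha=-\bar{\partial}\partial\Lambda\alpha=0$ into the first characterization. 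Both routes are valid and of comparable effort.

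The gap is in the orthogonal direct sum, which is precisely the analytic core of the lemma. You reduce to showing that $\partial\bar{\partial}\mathcal{A}^{p-1,q-1}(X)$ spans a closed complement of $\mathcal{H}^{p,q}(X)$ in $\mathcal{Z}^{p,q}(X)$, invoking the $\partial\bar{\partial}$-lemma for finite codimension and then ``elliptic regularity for a fourth-order Bott--Chern type Laplacian with respect to the $(\nu,\omega)$-inner product.'' That object does not exist here. The Bott--Chern Laplacian at bidegree $(p,q)$ requires terms such as $\partial^*\partial$ and $\bar{\partial}^*\bar{\partial}$ (or, in the self-adjoint completion, $(\bar{\partial}^*\partial)^*(\bar{\partial}^*\partial)$, etc.), and these involve $\partial,\bar{\partial}$ raising the total degree to $n-r+1$, where the $(\nu,\omega)$-inner product---and hence $\partial^*,\bar{\partial}^*$---are not defined. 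The operator $\partial^*\bar{\partial}^*\bar{\partial}\partial$ you name acts only at the lower degree $n-r-2$, and on its own it is not elliptic (its principal symbol is degenerate in the directions annihilated by $\sigma(\partial^*\bar{\partial}^*)$); ``and its relatives'' does not specify anything that stays within the admissible degree range. The abstract closedness of a finite-codimensional image of a continuous map in Fréchet spaces gives closedness in the $C^\infty$ topology, but the identification of $\mathcal{H}^{p,q}(X)$ as an \emph{orthogonal complement} lives in the $L^2$-type pairing, so you also need a regularity/decomposition statement in that topology, and that is exactly what is missing.

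The paper sidesteps the dangerous degree entirely by pushing the problem down to degree $n-r-2$, where the usual second-order theory (lemma \ref{kahler identies}, decomposition (\ref{harmonic decomposition})) is already in place: it observes $\partial^*\bar{\partial}^*\alpha\perp\mathcal{H}^{p-1,q-1}(X)$, solves $\partial^*\bar{\partial}^*\alpha=\Delta_{\bar{\partial}}\Delta_\partial\beta$ at degree $n-r-2$, and then proves the Kähler-type identity $\Delta_{\bar{\partial}}\Delta_\partial\beta=-\partial^*\bar{\partial}^*\partial\bar{\partial}\beta$ to conclude $\partial^*\bar{\partial}^*(\alpha+\partial\bar{\partial}\beta)=0$, hence $\alpha+\partial\bar{\partial}\beta\in\mathcal{H}^{p,q}(X)$ by the second characterization. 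You should replace your sketch by this reduction, or else construct, from scratch, a genuine elliptic operator compatible with the degree restrictions of the $(\nu,\omega)$-inner product and carry through the corresponding Hodge theory.
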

\begin{proof}
    By lemma \ref{Hodge star characterization of harmonic space}, $\mathcal{H}^{p,q}(X)=\left\{\alpha\in\mathcal{A}^{p,q}(X)\big|d\alpha=0,\ d^*\alpha=0\right\}$. Using lemma \ref{basic identity}, this space is equal to $\left\{\alpha\in\mathcal{A}^{p,q}(X)\big|d\alpha=0,\ \partial\Lambda\alpha=0,\ \bar{\partial}\Lambda\alpha=0\right\}$, which is clearly contained in $\left\{\alpha\in\mathcal{A}^{p,q}(X)\big|d\alpha=0,\ \partial\bar{\partial}\Lambda\alpha=0\right\}$. To prove the converse inclusion, suppose $d\alpha=0,\ \partial\bar{\partial}\Lambda\alpha=0$. By lemma \ref{basic identity}, $\partial\partial^*\alpha=-\sqrt{-1}\partial\bar{\partial}\Lambda\alpha=0$. So $(\partial^*\alpha,\partial^*\alpha)=(\alpha,\partial\partial^*\alpha)=0$, which implies $\partial^*\alpha=0$. Similarly we have $\bar{\partial}^*\alpha=0$. This proves $\left\{\alpha\in\mathcal{A}^{p,q}(X)\big|d\alpha=0,\ d^*\alpha=0\right\}\subset\left\{\alpha\in\mathcal{A}^{p,q}(X)\big|d\alpha=0,\ \partial\bar{\partial}\Lambda\alpha=0\right\}$, and so $\mathcal{H}^{p,q}(X)=\left\{\alpha\in\mathcal{A}^{p,q}(X)\big|d\alpha=0,\ \partial\bar{\partial}\Lambda\alpha=0\right\}$.

    Next, by lemma \ref{basic identity}, $\left\{\alpha\in\mathcal{A}^{p,q}(X)\big|d\alpha=0,\ \partial^*\bar{\partial}^*\alpha=0\right\}=\left\{\alpha\in\mathcal{A}^{p,q}(X)\big|d\alpha=0,\ \partial^*\partial\Lambda\alpha=0\right\}=\left\{\alpha\in\mathcal{A}^{p,q}(X)\big|d\alpha=0,\ \partial\Lambda\alpha=0\right\}$, which is equal to $\mathcal{H}^{p,q}(X)$ by the first part of the proof.

    Finally we prove the orthogonal direct sum. Clearly the two spaces $\mathcal{H}^{p,q}(X)$ and $\partial\bar{\partial}\mathcal{A}^{p-1,q-1}(X)$ are orthogonal to each other. So it suffices to prove that their sum is equal to $\mathcal{Z}^{p,q}(X)$. Pick any $\alpha\in\mathcal{Z}^{p,q}(X)$. Since $\partial^*\bar{\partial}^*\alpha$ is orthogonal to $\mathcal{H}^{p-1,q-1}(X)$, using $\mathcal{A}^{p-1,q-1}(X)=\mathcal{H}^{p-1,q-1}(X)\oplus\Delta_d\mathcal{A}^{p-1,q-1}(X)$, we can write $\partial^*\bar{\partial}^*\alpha=\Delta_{\bar{\partial}}\Delta_{\partial}\beta$. I claim that $\Delta_{\bar{\partial}}\Delta_{\partial}\beta=-\partial^*\bar{\partial}^*\partial\bar{\partial}\beta$.
    
    Indeed, $\bar{\partial}^*\Delta_{\bar{\partial}}\Delta_{\partial}\beta=\bar{\partial}^*\partial^*\bar{\partial}^*\alpha=0$ implies $\Delta_{\bar{\partial}}\Delta_{\partial}\beta=\bar{\partial}^*\bar{\partial}\Delta_{\partial}\beta$, which is equal to $\Delta_{\partial}\bar{\partial}^*\bar{\partial}\beta$ since both $\bar{\partial}$ and $\bar{\partial}^*$ commute with $\Delta_d$ (lemma \ref{kahler identies}). Now $\partial^*\Delta_{\partial}\bar{\partial}^*\bar{\partial}\beta\beta=\partial^*\partial^*\bar{\partial}^*\alpha=0$ implies $\Delta_{\partial}\bar{\partial}^*\bar{\partial}\beta=\partial^*\partial\bar{\partial}^*\bar{\partial}\beta$, which is equal to $-\partial^*\bar{\partial}^*\partial\bar{\partial}\beta$ by the anti-commutative of $\partial$ and $\bar{\partial}^*$ (lemma \ref{kahler identies}). This proves the claim.

    Therefore $\partial^*\bar{\partial}^*\left(\alpha+\partial\bar{\partial}\beta\right)=0$. This implies $\alpha+\partial\bar{\partial}\beta\in\mathcal{H}^{p,q}(X)$ by the first part of the lemma. Therefore $\mathcal{Z}^{p,q}(X)=\mathcal{H}^{p,q}(X)+\partial\bar{\partial}\mathcal{A}^{p-1,q-1}(X)$, finishing the proof.
\end{proof}
Recall that $H^{p,q}(X)=\frac{\left\{d\text{-closed }(p,q)\text{-forms}\right\}}{\left\{d\text{-exact }(p,q)\text{-forms}\right\}}$. The natural map $\mathcal{H}^{p,q}(X)\rightarrow H^{p,q}(X)$ is clearly injective ($p+q\leq n-r$ or $p+q\geq n+r$).
\begin{proposition}
    \[\mathcal{H}^{p,q}(X)\cong H^{p,q}(X).\]
    ($p+q\leq n-r$ or $p+q\geq n+r$)
\end{proposition}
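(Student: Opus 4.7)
The plan is to split by $p+q$; since $\mathcal{H}^{p,q}(X) \hookrightarrow H^{p,q}(X)$ is already injective, I only need to produce harmonic representatives of every cohomology class. When $p+q \leq n-r-1$ or $p+q \geq n+r+1$, the Laplacian $\Delta_d$ is defined and the orthogonal decomposition (\ref{harmonic decomposition}) applies, and the classical argument closes: writing a $d$-closed $\alpha$ as $\alpha = \alpha_H + dd^*\gamma + d^*d\gamma$, the equations $d\alpha = 0 = d\alpha_H$ force $dd^*d\gamma = 0$, and pairing with $d\gamma$ through the adjointness of $d$ and $d^*$ (whose availability at bidegree sum $p+q+1$ is built into the range assumption) yields $\|d^*d\gamma\|^2 = 0$, so $\alpha - \alpha_H = d(d^*\gamma)$ is $d$-exact. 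For $p+q = n-r$ the Laplacian is unavailable, but the orthogonal decomposition $\mathcal{Z}^{p,q}(X) = \mathcal{H}^{p,q}(X) \oplus \partial\bar\partial\mathcal{A}^{p-1,q-1}(X)$ from lemma \ref{harmonic space at the dangerous degree} substitutes directly: the second summand equals $d(\bar\partial\mathcal{A}^{p-1,q-1}(X))$ and is therefore $d$-exact, so every $d$-closed class has a harmonic representative.

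The main obstacle is the remaining boundary case $p+q = n+r$, where $\Delta_d$ is not defined on $\mathcal{A}^{p,q}(X)$ and no analog of lemma \ref{harmonic space at the dangerous degree} is in place. My plan is to sidestep this via the $*$-operator and a dimension count rather than construct a new decomposition. By corollary \ref{Hodge star preserves harmonic spaces}, $*$ is a linear isomorphism $\mathcal{H}^{p,q}(X) \cong \mathcal{H}^{n-q,n-p}(X)$; since $(n-q)+(n-p) = n-r$, the case already handled yields $\dim \mathcal{H}^{n-q,n-p}(X) = \dim H^{n-q,n-p}(X)$. Classical Poincar\'e duality combined with complex conjugation on the compact K\"ahlerian manifold $X$ then gives $\dim H^{n-q,n-p}(X) = \dim H^{p,q}(X)$, hence $\dim \mathcal{H}^{p,q}(X) = \dim H^{p,q}(X) < \infty$; combined with the injectivity already noted, this forces the natural map to be an isomorphism.
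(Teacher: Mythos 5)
Your proof is correct and follows essentially the same route as the paper: establish surjectivity directly in the range $p+q \leq n-r$ (via the harmonic decomposition for $p+q \leq n-r-1$ and via lemma \ref{harmonic space at the dangerous degree} for $p+q = n-r$), and then handle $p+q \geq n+r$ by combining corollary \ref{Hodge star preserves harmonic spaces} with the dimension equality $\dim H^{p,q}(X) = \dim H^{n-q,n-p}(X)$ coming from Serre duality and conjugation; you merely spell out the Poincar\'e-duality step that the paper leaves tacit, and redundantly also treat $p+q\ge n+r+1$ directly via $\Delta_d$.
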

\begin{proof}
    It suffices to prove that the natural map is surjective, or that the two spaces have the same dimension. By corollary \ref{Hodge star preserves harmonic spaces}, $\dim_{\C}\mathcal{H}^{p,q}(X)=\dim_{\C}\mathcal{H}^{n-q,n-p}(X)$. So it suffices to prove the surjectivity of $\mathcal{H}^{p,q}(X)\rightarrow H^{p,q}(X)$ in the range $p+q\leq n-r$, which follows from (\ref{harmonic decomposition}) and lemma \ref{harmonic space at the dangerous degree}.
\end{proof}
\begin{lemma}
    \begin{align*}
        \begin{split}
            &\Lambda\left(\mathcal{H}^{p,q}(X)\right)\subset\mathcal{H}^{p-1,q-1}(X)\quad(p+q\leq n-r);\\
            &L\left(\mathcal{H}^{p-1,q-1}(X)\right)\subset\mathcal{H}^{p,q}(X)\quad(p+q\leq n-r\text{ or }p+q\geq n-r+2).
        \end{split}
    \end{align*}
\end{lemma}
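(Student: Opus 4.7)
I would split each inclusion according to whether the image bidegree lies in the Laplacian range ($p+q\leq n-r-1$ or $p+q\geq n+r+1$) or at the dangerous degree $p+q=n-r$, and treat the entire upper range $p+q\geq n+r+2$ of the $L$-statement by Hodge-star duality. The three characterizations I would lean on are: $\mathcal{H}^{p,q}(X)=\{d\alpha=0,\ d{*}\alpha=0\}$ (lemma \ref{Hodge star characterization of harmonic space}); in the Laplacian range, $\mathcal{H}^{p,q}(X)=\ker\Delta_d$ (lemma \ref{Laplacian characterization of harmonic space}); and at the dangerous degree, the refined description $\mathcal{H}^{p,q}(X)=\{d\alpha=0,\ \partial\Lambda\alpha=\bar{\partial}\Lambda\alpha=0\}$ extracted from the proof of lemma \ref{harmonic space at the dangerous degree}.

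In the Laplacian range both inclusions fall out immediately from the identities $[\Lambda,\Delta_d]=0$ and $[L,\Delta_d]=0$ in lemma \ref{kahler identies}: if $\Delta_d\alpha=0$, then $\Delta_d\Lambda\alpha=\Lambda\Delta_d\alpha=0$, and similarly for $L$. For $\Lambda$ at the dangerous degree, let $\alpha\in\mathcal{H}^{p,q}(X)$ with $p+q=n-r$; the refined description gives $d\Lambda\alpha=0$ directly, while for $d{*}\Lambda\alpha=0$ I would use the identity $*\Lambda\alpha=L{*}\alpha$, obtained from the definition $\Lambda=(-1)^{p+q}{*}L{*}$ together with $*^2=(-1)^{p+q}$ by a short sign computation, and then $dL=Ld$ (since $d\omega=0$) together with $d{*}\alpha=0$ (harmonicity of $\alpha$) finishes the job. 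For $L$ into the dangerous degree, take $\alpha\in\mathcal{H}^{p-1,q-1}(X)$ with $(p-1)+(q-1)=n-r-2$; the already-proven Laplacian case of the $\Lambda$-inclusion gives $\Lambda\alpha\in\mathcal{H}^{p-2,q-2}(X)$, so $\partial\Lambda\alpha=\bar{\partial}\Lambda\alpha=0$. Combined with the K\"ahler identity $\Lambda L\alpha=L\Lambda\alpha+2\alpha$ (the commutator $[\Lambda,L]=n-r-p-q$ evaluated at bidegree $(p-1,q-1)$ equals $2$) and $\partial\alpha=\bar{\partial}\alpha=0$, this yields $\partial\Lambda L\alpha=\bar{\partial}\Lambda L\alpha=0$, so $L\alpha\in\mathcal{H}^{p,q}(X)$ by the refined characterization.

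The upper range $p+q\geq n+r+2$ of the $L$-statement reduces uniformly to the first inclusion by Hodge duality: for $\alpha\in\mathcal{H}^{p-1,q-1}(X)$ there, $*\alpha\in\mathcal{H}^{n-q+1,n-p+1}(X)$ has bidegree sum $\leq n-r$, the same sign computation gives $L\alpha=(-1)^{p+q}{*}\Lambda({*}\alpha)$, and corollary \ref{Hodge star preserves harmonic spaces} together with the already-established $\Lambda$-inclusion concludes. The step I expect to require the most care is the $L$-dangerous case: there the Laplacian characterization is unavailable, and the argument pivots on bootstrapping through the Laplacian case of the $\Lambda$-inclusion via the commutator $[\Lambda,L]=2$ at the boundary bidegree. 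Getting the order of the four cases right (so that each one can freely cite the previous), and keeping careful track of which identities from lemma \ref{kahler identies} and lemma \ref{basic identity} are valid precisely on which range, is the delicate organizational point.
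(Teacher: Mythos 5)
Your proposal is correct and follows the same three-way case split as the paper: the Laplacian range is handled via $[\Lambda,\Delta_d]=[L,\Delta_d]=0$ and $\mathcal{H}=\ker\Delta_d$, the dangerous degree $p+q=n-r$ is treated by hand, and the upper range is reduced to the $\Lambda$-inclusion by applying $*$. At the dangerous degree your bookkeeping differs cosmetically — you use $*\Lambda=L*$ with $dL=Ld$ for the $\Lambda$-inclusion and bootstrap the $L$-inclusion through $[\Lambda,L]=2$, whereas the paper invokes $\partial\Lambda\alpha=-\sqrt{-1}\bar\partial^*\alpha$, $\partial^*\Lambda=\Lambda\partial^*$, and the identity $[L,\partial^*]\alpha=0$ from the proof of lemma \ref{kahler identies} — but these are interchangeable applications of the same underlying identities, and you also correctly read the upper range as $p+q\geq n+r+2$.
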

\begin{proof}
    The two inclusions in the range $p+q\leq n-r-1$ follow from lemma \ref{kahler identies}, $[L,\Delta_d]=0,\ [\Lambda,\Delta_d]=0$ and lemma \ref{Laplacian characterization of harmonic space}, $\mathcal{H}^{p,q}(X)=\ker\Delta_d$.
    
    Now let $p+q=n-r$. Suppose $\alpha\in\mathcal{H}^{p,q}(X)$. Then $\partial\Lambda\alpha=-\sqrt{-1}\bar{\partial}^*\alpha=0$, $\partial^*\Lambda\alpha=\Lambda\partial^*\alpha=0$. So $\Lambda\alpha\in\mathcal{H}^{p-1,q-1}(X)$. Next suppose $\alpha\in\mathcal{H}^{p-1,q-1}(X)$. Then $dL\alpha=0$, $\partial^*L\alpha=[\partial^*,L]\alpha=0$ by (\ref{eq5}). So $L\alpha\in\mathcal{H}^{p,q}(X)$.
    
    Finally the inclusion $L\left(\mathcal{H}^{p-1,q-1}(X)\right)\subset\mathcal{H}^{p,q}(X)$ in the range $p+q\geq n-r+2$ follows from the inclusion $\Lambda\left(\mathcal{H}^{p,q}(X)\right)\subset\mathcal{H}^{p-1,q-1}(X)$ in the range $p+q\leq n-r$ by taking $*$.
\end{proof}
Define $\mathcal{P}^{p,q}(X):=\ker\left\{\Lambda:\mathcal{H}^{p,q}(X)\rightarrow\mathcal{H}^{p-1,q-1}(X)\right\}$ for $p+q\leq n-r$.
\begin{proposition}\label{proposition: Lefschetz decomposition}
    \[\mathcal{H}^{p,q}(X)=\bigoplus_{i\geq 0}L^i\mathcal{P}^{p-i,q-i}(X)\]
    (orthogonal direct sum), where $p+q\leq n-r$.
\end{proposition}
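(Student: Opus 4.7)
My plan is to apply the standard $\mathfrak{sl}_2$-representation theory to the finite-dimensional graded space $\bigoplus_{p+q\leq n-r}\mathcal{H}^{p,q}(X)$, viewed as a module for the triple $(L,\Lambda,H)$, where $H$ acts on $\mathcal{H}^{p,q}(X)$ by the scalar $n-r-p-q$. All the ingredients are already in place: the previous lemma ensures that $L$ and $\Lambda$ restrict to operators between the harmonic spaces in the right range, and the commutation relation $[\Lambda,L]=n-r-p-q$ on $\bigwedge^{p,q}V$ for $p+q\leq n-r-2$ passes to the harmonic space verbatim because it is a pointwise identity of operators on $\mathcal{A}^{p,q}(X)$.

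For existence of the decomposition, I would induct on $\min(p,q)$. The base case $\min(p,q)=0$ is trivial since $\mathcal{H}^{p-1,q-1}(X)=0$, forcing $\mathcal{H}^{p,q}(X)=\mathcal{P}^{p,q}(X)$. For the inductive step, given $\alpha\in\mathcal{H}^{p,q}(X)$ with $p+q\leq n-r$, we have $\Lambda\alpha\in\mathcal{H}^{p-1,q-1}(X)$, which by induction can be written as $\Lambda\alpha=\sum_{j\geq 0}L^{j}\gamma_{j+1}$ with $\gamma_{j+1}\in\mathcal{P}^{p-1-j,q-1-j}(X)$. Using the relation $\Lambda L^i\beta=i(n-r-p-q+i)L^{i-1}\beta$ valid for $\beta\in\mathcal{P}^{p-i,q-i}(X)$ (a consequence of the pointwise formula for $\Lambda$ established after equation (\ref{Lefschetz decomposition})), I can solve inductively for $\beta_i\in\mathcal{P}^{p-i,q-i}(X)$ so that $\Lambda\bigl(\sum_{i\geq 1}L^i\beta_i\bigr)=\Lambda\alpha$. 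The coefficients $i(n-r-p-q+i)$ are strictly positive in the relevant range $p+q\leq n-r$ and $i\geq 1$, so the system is solvable. Setting $\beta_0:=\alpha-\sum_{i\geq 1}L^i\beta_i$, we have $\Lambda\beta_0=0$ and $\beta_0\in\mathcal{H}^{p,q}(X)$, hence $\beta_0\in\mathcal{P}^{p,q}(X)$, yielding the desired decomposition $\alpha=\sum_{i\geq 0}L^i\beta_i$.

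For orthogonality, I would argue pointwise. If $\beta\in\mathcal{P}^{p,q}(X)$, then $\Lambda\beta=0$ at the level of forms, so $\beta(x)\in P^{p,q}_{(\nu_x,\omega_x)}$ for every $x\in X$; hence $L^i\beta(x)\in\omega_x^i P^{p-i,q-i}_{(\nu_x,\omega_x)}$. Since the pointwise inner product $(-,-)_x$ induced by $(\nu_x,\omega_x)$ makes the pointwise Lefschetz decomposition (\ref{Lefschetz decomposition}) orthogonal (as noted right after its definition), forms in $L^i\mathcal{P}^{p-i,q-i}(X)$ and $L^{i'}\mathcal{P}^{p-i',q-i'}(X)$ with $i\neq i'$ are pointwise orthogonal. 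Integrating against $\omega^n/n!$ using (\ref{global inner product}) gives global orthogonality.

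The main obstacle I anticipate is the boundary case $p+q=n-r$, where the commutator identity $[\Lambda,L]=n-r-p-q$ was established only in the range $p+q\leq n-r-2$, and where $L$ moves a primitive class out of the ``good range'' $p+q\leq n-r$. However, the previous lemma already guarantees that $L:\mathcal{H}^{p-1,q-1}(X)\to\mathcal{H}^{p,q}(X)$ is well-defined for $p+q\leq n-r$, and the pointwise formula for $\Lambda L^i$ on primitives holds from the explicit expressions given after (\ref{Lefschetz decomposition}). So the induction above goes through at the boundary without additional work, with $\mathcal{P}^{p,q}(X)$ at $p+q=n-r$ being identified by lemma \ref{harmonic space at the dangerous degree} among $d$-closed forms satisfying the appropriate harmonic conditions.
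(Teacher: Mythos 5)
Your proposal is correct and takes essentially the same approach as the paper's own proof: both reduce surjectivity to decomposing $\Lambda\alpha\in\mathcal{H}^{p-1,q-1}(X)$ by induction and then inverting $\Lambda$ on $\bigoplus_{i\geq1}L^i\mathcal{P}^{p-i,q-i}(X)$ via the explicit coefficients, and your pointwise argument is exactly the intended justification of the paper's bare assertion of injectivity/orthogonality. One small slip: the coefficient should be $c_i=i(n-r-p-q+i+1)$ rather than $i(n-r-p-q+i)$, since in the paper's formula $\Lambda\bigl(\sum_j\omega^j\alpha_j\bigr)=\sum_j j(k+j+1)\omega^{j-1}\alpha_j$ the quantity $k=n-p-q-r$ is taken at the bidegree $(p,q)$ of the ambient form rather than of the primitive component $\alpha_j$; this does not affect the positivity claim (still $c_i>0$ for $i\geq1$ and $p+q\leq n-r$).
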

\begin{proof}
    The natural map $\bigoplus_{i\geq 0}L^i\mathcal{P}^{p-i,q-i}(X)\rightarrow\mathcal{H}^{p,q}(X)$ is clearly injective. Use induction on $p+q$ to prove surjectivity. Indeed, $\forall\alpha\in\mathcal{H}^{p,q}(X)$, if $\Lambda\alpha\in\bigoplus_{i\geq 0}L^i\mathcal{P}^{p-1-i,q-1-i}(X)$, then $\exists\beta\in\bigoplus_{i\geq 1}L^i\mathcal{P}^{p-i,q-i}(X)$ such that $\Lambda\beta=\Lambda\alpha$. So $\alpha-\beta\in\mathcal{P}^{p,q}(X)$.
\end{proof}
\begin{corollary}
    \[-\wedge\nu\omega^k:\mathcal{H}^{p,q}(X)\xrightarrow{\cong}\mathcal{H}^{n-q,n-p}(X),\]
    where $p+q+k=n-r$.
\end{corollary}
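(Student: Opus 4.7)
The plan is to deduce this corollary from the Hodge star isomorphism established in Corollary \ref{Hodge star preserves harmonic spaces}, together with the Lefschetz decomposition of Proposition \ref{proposition: Lefschetz decomposition}. The key observation is that on each primitive summand of $\mathcal{H}^{p,q}(X)$, the operators $-\wedge\nu\omega^k$ and $*$ differ only by an explicit nonzero scalar, so the isomorphism statement reduces to tracking these scalars.

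First I would take an arbitrary $\alpha\in\mathcal{H}^{p,q}(X)$; note that $p+q=n-r-k\leq n-r$, so the Lefschetz decomposition applies and we may write uniquely $\alpha=\sum_{i=0}^{\min(p,q)}\omega^i\alpha_i$ with $\alpha_i\in\mathcal{P}^{p-i,q-i}(X)$ by Proposition \ref{proposition: Lefschetz decomposition}. A direct expansion gives
\[\nu\omega^k\alpha=\sum_i\nu\omega^{k+i}\alpha_i,\]
which lies in bidegree $(n-q,n-p)$ because $p+q+k=n-r$. On the other hand, the defining formula for $*$ on a Lefschetz decomposition reads
\[*\alpha=\sum_i c_i\,\nu\omega^{k+i}\alpha_i,\qquad c_i:=\sqrt{-1}^{-(p+q)^2}(-1)^{q-i}\frac{i!}{(k+i)!}\neq 0.\]
Thus $-\wedge\nu\omega^k$ and $*$ produce the same $\nu\omega^{k+i}\alpha_i$-terms and differ only by the nonzero scalars $c_i$ on each Lefschetz component.

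Second, I would introduce the automorphism $D\colon\mathcal{H}^{p,q}(X)\to\mathcal{H}^{p,q}(X)$ defined by $D(\omega^i\alpha_i)=c_i\omega^i\alpha_i$ and extended by linearity through the Lefschetz decomposition. Since every $c_i$ is nonzero, $D$ is invertible, and the computation above yields the operator identity $*=(-\wedge\nu\omega^k)\circ D$, equivalently $-\wedge\nu\omega^k=*\circ D^{-1}$. This simultaneously confirms that the image of $-\wedge\nu\omega^k$ actually lies in $\mathcal{H}^{n-q,n-p}(X)$ and that it is an isomorphism, since $*$ is one by Corollary \ref{Hodge star preserves harmonic spaces}.

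No serious obstacle is anticipated. All of the analytic substance---elliptic regularity, Kähler identities at the boundary degree $p+q=n-r$, and preservation of harmonic spaces by $L$ and $\Lambda$---has already been absorbed into Proposition \ref{proposition: Lefschetz decomposition} and Corollary \ref{Hodge star preserves harmonic spaces}. The corollary itself is then purely a matter of bookkeeping the explicit scalars $c_i$ appearing in the formula defining $*$ on primitive components.
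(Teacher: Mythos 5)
Your proposal is correct and follows essentially the same route as the paper: both recognize that on each Lefschetz summand $L^i\mathcal{P}^{p-i,q-i}(X)$ the map $-\wedge\nu\omega^k$ agrees with $*$ up to the nonzero scalar $c_i$, and both then invoke Proposition \ref{proposition: Lefschetz decomposition} and Corollary \ref{Hodge star preserves harmonic spaces} to conclude. Your factorization $-\wedge\nu\omega^k=*\circ D^{-1}$ is merely a more explicit packaging of the same observation.
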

\begin{proof}
    \[-\wedge\nu\omega^k:L^i\mathcal{P}^{p-i,q-i}(X)\xrightarrow{\cong}*L^i\mathcal{P}^{p-i,q-i}(X),\]
    then use proposition \ref{Lefschetz decomposition} and corollary \ref{Hodge star preserves harmonic spaces}.
\end{proof}
The Hermitian form $\left<-,-\right>_{\nu\omega^k}$ is positive definite on $\mathcal{P}^{p,q}(X)$ (since it is pointwise positive definite on the bundle $P^{p,q}$). We collect the results in the following theorem.
\begin{theorem}\label{theorem: harmonic space}
    Let $(\nu,\omega)$ be a Hodge-Riemann pair on $X$. Then $([\nu],[\omega])$ is a Hodge-Riemann pair on $X$ in the sense of definition \ref{definition: HR global cohomology}. We have
    \begin{align}\label{global norm = local norm}
        \|a\|_{([\nu],[\omega])}^2=\inf_{[\alpha]=a}\int_X|\alpha|_{(\nu,\omega)}^2\frac{\omega^n}{n!}
    \end{align}
    where $a\in H^{p,q}(X)$ with $p+q\leq n-r$ or $p+q\geq n-r$. Moreover, we can define the harmonic spaces $\H^{p,q}_{(\nu,\omega)}(X)\subset\A^{p,q}(X)$ with $\H^{p,q}_{(\nu,\omega)}(X)\cong H^{p,q}(X)$ such that every $\alpha\in\H^{p,q}_{(\nu,\omega)}(X)$ is the unique minimizer of RHS of (\ref{global norm = local norm}).
\end{theorem}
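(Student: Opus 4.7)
The plan is to assemble the pieces built in this section. For the first assertion, I would transport the Lefschetz structure from the harmonic spaces to cohomology via the isomorphism $\mathcal{H}^{p,q}(X) \cong H^{p,q}(X)$ established just before the theorem. The corollary after Proposition \ref{proposition: Lefschetz decomposition} gives the hard Lefschetz isomorphism $-\wedge\nu\omega^k$ on harmonic forms, which is the cohomological hard Lefschetz for $[\nu]$ and $[\omega]$. Proposition \ref{proposition: Lefschetz decomposition} identifies $\mathcal{P}^{p,q}(X)$ with $P^{p,q}_{([\nu],[\omega])}(X)$: primitive harmonic forms are precisely the kernel of $-\wedge\nu\omega^{k+1}$ in cohomology. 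Positive definiteness of $\langle-,-\rangle_{\nu\omega^k}$ on $P^{p,q}_{([\nu],[\omega])}(X)$ then follows by integrating the pointwise Hodge-Riemann inequality, which is exactly the hypothesis that $(\nu,\omega)$ is a pointwise Hodge-Riemann pair.

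For the harmonic spaces and the minimization property, I would take $\mathcal{H}^{p,q}_{(\nu,\omega)}(X) := \mathcal{H}^{p,q}(X)$. Fix $a \in H^{p,q}(X)$ with $p+q \leq n-r$; the range $p+q \geq n+r$ is then obtained by applying $*$, which is an isometry. Let $\alpha_H \in \mathcal{H}^{p,q}(X)$ be the harmonic representative. Any other closed representative has the form $\alpha = \alpha_H + \eta$ with $\eta$ a $d$-exact $(p,q)$-form. Writing $\eta = d\beta$ and decomposing $\beta$ by bidegree, the $(p,q)$-component of $d\beta$ is $\partial\beta^{(p-1,q)} + \bar\partial\beta^{(p,q-1)}$, so $\eta \in \partial\mathcal{A}^{p-1,q}(X) + \bar\partial\mathcal{A}^{p,q-1}(X)$. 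The very definition of $\mathcal{H}^{p,q}(X)$ then gives $(\alpha_H,\eta) = 0$, and Pythagoras yields
\[\int_X|\alpha|^2_{(\nu,\omega)}\frac{\omega^n}{n!} = (\alpha_H,\alpha_H) + (\eta,\eta) \geq (\alpha_H,\alpha_H),\]
with equality iff $\eta = 0$ (since $(-,-)$ is positive definite pointwise).

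It remains to identify $(\alpha_H,\alpha_H) = \|a\|^2_{([\nu],[\omega])}$. Proposition \ref{proposition: Lefschetz decomposition} writes $\alpha_H = \sum_i \omega^i \alpha_i$ with $\alpha_i \in \mathcal{P}^{p-i,q-i}(X)$, and this decomposition is pointwise (hence globally) orthogonal with respect to $(-,-)$. Unwinding the definitions of $*$ and $(-,-)$ shows $(\omega^i\alpha_i,\omega^i\alpha_i) = \frac{i!}{(k+i)!}\langle[\alpha_i],[\alpha_i]\rangle_{\nu\omega^{k+2i}}$, and summing over $i$ matches the cohomological formula (\ref{compute the metric by Q3}) for $\|a\|^2_{([\nu],[\omega])}$ with $a_i = [\alpha_i]$. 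The main technical subtlety inherited from the earlier development is the boundary degree $p+q=n-r$, where the Laplacian does not directly characterize $\mathcal{H}^{p,q}(X)$ and $d^*$ is not even defined on $\mathcal{A}^{p,q}(X)$; but Lemma \ref{harmonic space at the dangerous degree} supplies the orthogonal decomposition $\mathcal{Z}^{p,q}(X) = \mathcal{H}^{p,q}(X) \oplus \partial\bar\partial\mathcal{A}^{p-1,q-1}(X)$ needed for the Pythagoras step, so once that lemma is accepted the argument proceeds uniformly across the full range.
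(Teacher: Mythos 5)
Your proposal is correct and is essentially the argument the paper leaves implicit (the theorem is presented as ``we collect the results'' with no separate proof), so you are supplying exactly the intended assembly of Corollary~\ref{Hodge star preserves harmonic spaces}, Lemma~\ref{harmonic space at the dangerous degree}, Proposition~\ref{proposition: Lefschetz decomposition} and the corollary after it. One small correction on attribution: the Pythagoras step needs only that a $d$-exact $(p,q)$-form $\eta$ lies in $\partial\mathcal{A}^{p-1,q}(X)+\bar\partial\mathcal{A}^{p,q-1}(X)$ (which you observed) and is therefore orthogonal to $\mathcal{H}^{p,q}(X)$ by the very \emph{definition} of the harmonic space, which holds in every bidegree including $p+q=n-r$. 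What Lemma~\ref{harmonic space at the dangerous degree} actually supplies at the boundary degree is the surjectivity of $\mathcal{H}^{p,q}(X)\to H^{p,q}(X)$, i.e.\ the existence of a harmonic representative achieving the infimum, not the orthogonality itself. Otherwise the identification of $\mathcal{P}^{p,q}(X)$ with $P^{p,q}_{([\nu],[\omega])}(X)$, the pointwise positivity giving the Hodge--Riemann relation on $\mathcal{P}$, the $*$-isometry reduction to $p+q\leq n-r$, and the coefficient match $(\omega^i\alpha_i,\omega^i\alpha_i)=\frac{i!}{(k+i)!}\langle a_i,a_i\rangle_{\nu\omega^{k+2i}}$ against (\ref{compute the metric by Q3}) are all sound.
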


\begin{corollary}(Dinh-Nguyen\cite{DN2})
    Let $\omega,\omega_1,\cdots,\omega_r$ be K\"ahler forms on $X$, then for $p+q=n-r$,
    \begin{itemize}
        \item[(1)] (Hard Lefschetz)
        \[-\wedge\omega_1\cdots\omega_r:H^{p,q}(X)\xrightarrow{\cong}H^{n-q,n-p}(X).\]
        \item[(2)] (Hodge-Riemann relation) the Hermitian form
        \[\left<\alpha,\beta\right>:=(-1)^q\sqrt{-1}^{(p+q)^2}\int_X\alpha\wedge\bar{\beta}\wedge\omega_1\cdots\omega_r\]
        is positive definite on
        \[P^{p,q}(X):=\ker\left((-\wedge\omega_1\cdots\omega_r\omega:H^{p,q}(X)\rightarrow H^{n-q+1,n-p+1}(X)\right).\]
    \end{itemize}
\end{corollary}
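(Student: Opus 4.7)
The statement is essentially a direct consequence of the preceding theorem, so my proof would be a short chain of references. The plan is to apply Theorem \ref{theorem: harmonic space} to the pair $(\nu,\omega)$ with $\nu=\omega_1\cdots\omega_r$, recognizing that the only hypothesis required is the \emph{pointwise} Hodge-Riemann property, which is precisely the content of the Example stated at the beginning of section 5.

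First I would observe that $\nu=\omega_1\cdots\omega_r$ is a real $d$-closed $(r,r)$-form, since each $\omega_i$ is a real closed $(1,1)$-form. By the Example (applied at each point $x\in X$ to the $(1,1)$-forms $\omega_{1,x},\ldots,\omega_{r,x},\omega_x$ on $T_x^*X$), the pair $(\nu_x,\omega_x)$ is a Hodge-Riemann pair on $T_x^*X$ for every $x$. By the definition preceding lemma \ref{basic identity}, this says exactly that $(\nu,\omega)$ is a Hodge-Riemann pair on $X$.

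Now invoke Theorem \ref{theorem: harmonic space}: it concludes that $([\nu],[\omega])=([\omega_1\cdots\omega_r],[\omega])$ is a Hodge-Riemann pair on $X$ in the sense of Definition \ref{definition: HR global cohomology}. Specialize that definition to $v=[\omega_1\cdots\omega_r]\in H^{r,r}(X,\R)$, $w=[\omega]\in\Kah(X)$, and $k=0$ (the degree condition $p+q+k+r=n$ forces $k=0$ since $p+q=n-r$). Clause (1) then reads: $-\wedge v:H^{p,q}(X)\to H^{n-q,n-p}(X)$ is an isomorphism, which is precisely (1) of the corollary. Clause (2) reads: the Hermitian form $\langle a,b\rangle_v=(-1)^q\sqrt{-1}^{(p+q)^2}\int_X a\bar b\, v$ is positive definite on $P^{p,q}_{(v,w)}(X)=\ker(-\wedge v w)$, which is exactly (2) of the corollary.

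There is essentially no obstacle at this stage of the paper: all the work has been done in sections 4-5, and the only nontrivial ingredient, namely that products of Kähler forms satisfy the pointwise Hodge-Riemann relations, is cited from the Example. (The Example itself is the hard local input — traditionally this is the Dinh-Nguyen linear-algebraic theorem — but by the time one reaches this corollary it is assumed available.) I would therefore write the proof in two or three sentences, simply citing the Example and Theorem \ref{theorem: harmonic space} and unwinding Definition \ref{definition: HR global cohomology} with $k=0$.
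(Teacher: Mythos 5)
Your proposal is correct and coincides with what the paper intends: the corollary is presented immediately after Theorem \ref{theorem: harmonic space} precisely because it is the special case $k=0$ of Definition \ref{definition: HR global cohomology}, obtained by feeding the Example (the pointwise/linear-algebraic Hodge--Riemann statement for products of positive $(1,1)$-forms, i.e.\ Timorin's theorem) into Theorem \ref{theorem: harmonic space}. You have also correctly identified the one minor verification needed to invoke the machinery — that $\nu=\omega_1\cdots\omega_r$ is real, $d$-closed, and pointwise a Hodge--Riemann pair — so nothing is missing.
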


\begin{corollary}\label{corollary: pointwise zero}
    Let $\omega_1,\cdots,\omega_r$ be K\"ahler forms on a compact K\"ahlerian manifold $X$. Suppose that $a\in H^{p,q}(X)$ satisfies $a\cdot[\omega_1\cdots\omega_r]=0$, then there exists a $d$-closed $\alpha\in\A^{p,q}(X)$ repesenting $a$ such that $\alpha\cdot\omega_1\cdots\omega_r=0$. 
\end{corollary}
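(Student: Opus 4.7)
The plan is to realize $\alpha$ as a harmonic representative of $a$ in the harmonic theory of Theorem \ref{theorem: harmonic space} applied to the Hodge-Riemann pair $(\nu,\omega)$, where $\omega$ is any auxiliary K\"ahler form on $X$ and $\nu=\omega_1\cdots\omega_r$.

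First, when $p+q<n-r$ the hypothesis $a\cdot[\nu]=0$ already forces $a=0$: hard Lefschetz for $(\nu,\omega)$ gives the isomorphism $-\wedge[\nu\omega^{k_0}]\colon H^{p,q}(X)\to H^{n-q,n-p}(X)$ with $k_0=n-p-q-r>0$, and $[\nu\omega^{k_0}]a=[\omega^{k_0}]\cdot[\nu]a=0$ yields $a=0$, so $\alpha=0$ works.

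The essential case is the dangerous degree $p+q=n-r$. I take $\alpha\in\mathcal{H}^{p,q}_{(\nu,\omega)}(X)$ to be the unique harmonic representative of $a$ and claim $\nu\wedge\alpha=0$ pointwise. Using the Lefschetz decomposition of Proposition \ref{proposition: Lefschetz decomposition}, write $\alpha=\sum_i L^i\alpha_i$ with $\alpha_i\in\mathcal{P}^{p-i,q-i}_{(\nu,\omega)}(X)$ harmonic primitive. Since $k=n-p-q-r=0$ in this degree, the explicit formula for the Hodge star reads
\[
*(L^i\alpha_i)=\sqrt{-1}^{-(p+q)^2}(-1)^{q-i}\,\nu\wedge\omega^i\wedge\alpha_i,
\]
so each $\nu\wedge\omega^i\wedge\alpha_i$ is a nonzero constant multiple of $*(L^i\alpha_i)$ and therefore lies in $\mathcal{H}^{n-q,n-p}_{(\nu,\omega)}(X)$ by Corollary \ref{Hodge star preserves harmonic spaces}. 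Summing over $i$, $\nu\wedge\alpha\in\mathcal{H}^{n-q,n-p}_{(\nu,\omega)}(X)$. But $\nu\wedge\alpha$ is $d$-closed and represents the vanishing cohomology class $[\nu]a=0$, so the uniqueness of harmonic representatives forces $\nu\wedge\alpha=0$ pointwise.

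The remaining ranges reduce to the above by duality and a reduction to smaller pairs. For $p+q\geq n+r$ one writes $\alpha=*\tilde\alpha$ with $\tilde\alpha\in\mathcal{H}^{n-q,n-p}_{(\nu,\omega)}(X)$ in the inner safe range and runs the symmetric computation using the outer Lefschetz decomposition. The main obstacle I anticipate is the intermediate range $n-r<p+q<n+r$, where the $(\nu,\omega)$-harmonic theory is not directly defined; here my plan is, for $p+q=n-r+m$ with $0<m<r$, to pass to the Hodge-Riemann pair $(\omega_1\cdots\omega_{r-m},\omega)$ of smaller rank, for which this bidegree is precisely the dangerous degree, after first transferring the vanishing hypothesis through a $\partial\bar{\partial}$-lemma argument that absorbs the remaining factors $\omega_{r-m+1},\ldots,\omega_r$. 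Making this transfer rigorous, and dualizing it symmetrically for $m\geq r$, is the key technical challenge of the proof.
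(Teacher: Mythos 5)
Your argument for the range $p+q \leq n-r$ is correct, but you have misidentified where the real difficulty lies. In your ``essential case'' $p+q=n-r$, the map $-\wedge\nu:H^{p,q}(X)\to H^{p+r,q+r}(X)$ is in fact an \emph{isomorphism} (this is precisely the $k=0$ hard Lefschetz statement for the pair $(\nu,\omega)$, since $p+r = n-q$ and $q+r = n-p$ here), so the hypothesis $a\cdot[\nu]=0$ already forces $a=0$ and the harmonic computation you carry out is addressing a vacuous case. The computation itself is sound, but the conclusion you could have reached in one line. The genuinely nontrivial range is $n-r < p+q$, where $-\wedge\nu$ on $H^{p,q}$ has honest kernel, and that is exactly where your proposal is only a sketch.

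The sketch for $p+q = n-r+m$ with $0<m<r$ has a real gap: you propose passing to the pair $(\omega_1\cdots\omega_{r-m},\omega)$, but the hypothesis $a\cdot[\omega_1\cdots\omega_r]=0$ does \emph{not} give any control on $a\cdot[\omega_1\cdots\omega_{r-m}]$. Indeed $a\cdot[\omega_1\cdots\omega_{r-m}]\in H^{p+r-m,q+r-m}(X)$ has total degree $n+r-m>n$, so $-\wedge\omega_{r-m+1}\cdots\omega_r$ on that space is far from injective, and the ``$\partial\bar\partial$-lemma argument that absorbs the remaining factors'' you gesture at would have to do something substantial. The paper's own proof navigates this by an induction on $r$ whose crucial feature is that it takes one of the factors, $\omega_r$, as the \emph{auxiliary} K\"ahler form and works with the Hodge--Riemann pair $(\omega_1\cdots\omega_{r-1},\omega_r)$ rather than $(\nu,\omega)$ with $\omega$ arbitrary. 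Concretely: with $k=p+q+r-n-1\geq 0$, one lifts $b:=a\cdot[\omega_1\cdots\omega_{r-1}]$ to a $(\omega_1\cdots\omega_{r-1},\omega_r)$-harmonic form $\beta\in\mathcal{H}^{p-k,q-k}$ via the hard Lefschetz isomorphism $-\wedge\omega_1\cdots\omega_{r-1}\omega_r^k$; then $b\cdot[\omega_r]=a\cdot[\nu]=0$ says exactly that $\beta$ is \emph{primitive} for that pair, which gives the \emph{pointwise} identity $\beta\,\omega_1\cdots\omega_{r-1}\omega_r^{k+1}=0$. One then applies the induction hypothesis (for $r-1$) to $a-[\beta\omega_r^k]$, whose product with $[\omega_1\cdots\omega_{r-1}]$ vanishes, and adds $\beta\omega_r^k$ to the resulting representative. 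Your choice of an arbitrary auxiliary $\omega$ decouples the auxiliary metric from the vanishing hypothesis, which is precisely what blocks the reduction; using $\omega_r$ as the auxiliary is the key trick you are missing. The range $p+q\geq n+r$ has the same issue: your duality sketch produces $\nu^2$ against the primitive components and there is no reason for the harmonic representative to kill $\nu$ pointwise, whereas the paper's induction applies uniformly there as well.
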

\begin{proof}
    Use induction on $r$. If $r\leq \max\{0,n-p-q\}$, $-\wedge\omega_1\cdots\omega_r: H^{p,q}(X)\rightarrow H^{p+r,q+r}$ is injective, so we must have $a=0$ and the assertion is trivial. Now suppose $r\geq \max\{1,n-p-q+1\}$. Let $k=p+q+r-n-1$. Let $b=a\cdot[\omega_1\cdots\omega_{r-1}]\in H^{p+r-1,q+r-1}(X)$. Consider the Hodge-Riemann pair $(\omega_1\cdots\omega_{r-1},\omega_r)$. $-\wedge\omega_1\cdots\omega_{r-1}\omega_r^k: H^{p-k,q-k}(X)\rightarrow H^{p+r-1,q+r-1}(X)$ is an isomorphism. Take $\beta\in \H^{p-k,q-k}(X)$ such that $[\beta\cdot \omega_1\cdots\omega_{r-1}\omega_r^k]=b$. By assumption $b\cdot[\omega_r]=a\cdot[\omega_1\cdots\omega_r]=0$. So $\beta\in\mathcal{P}^{p-k,q-k}_{(\omega_1\cdots\omega_{r-1},\omega_r)}(X)$, which implies
    \begin{align}\label{eq22}
        \beta\cdot\omega_1\cdots\omega_{r-1}\omega_r^{k+1}=0.
    \end{align}
    Note that $[\beta\omega_1\cdots\omega_{r-1}\omega_r^k]=b=a\cdot[\omega_1\cdots\omega_{r-1}]$ implies $(a-[\beta\omega_r^k])\cdot[\omega_1\cdots\omega_{r-1}]=0$. By induction hypothesis, $a-[\beta\omega_r^k]$ is represented by some $\gamma\in\A^{p,q}(X)$ such that $\gamma\cdot\omega_1\cdots\omega_{r-1}=0$. Combining with (\ref{eq22}), we see that $\alpha=\gamma+\beta\omega_r^k$ satisfies the desired property.
\end{proof}

\section{Some local results}

\subsection{Outline}
This section is devoted to the proof of the following two linear algebraic propositions.
\begin{proposition}\label{proposition: local 1}
    Let $V$ be an $n$ dimensional $\C$-vector space, $\omega_1,\cdots,\omega_r,\omega$ positive (1,1)-forms on $V$. Let $\nu=\omega_1\cdots\omega_r$. Suppose that $\omega_i\leq N\omega\ (i=1,\cdots,r)$. Then for all $\alpha\in\bigwedge^{p,q}V$ with $p+q\leq n-r$, we have
    \begin{align*}
        |\nu\alpha|_\omega\lesssim N^{r/2}|\alpha|_{(\nu,\omega)}
    \end{align*}
\end{proposition}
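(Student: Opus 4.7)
The proof proceeds by $(\nu,\omega)$-Lefschetz decomposition and a sharp pointwise bound on $(\nu,\omega)$-primitive components. Set $k=n-p-q-r\geq 0$ and write
\[
\alpha=\sum_{i=0}^{\min(p,q)}\omega^{i}\alpha_i,\qquad \alpha_i\in P^{p-i,q-i}_{(\nu,\omega)}V,
\]
so that $|\alpha|^{2}_{(\nu,\omega)}=\sum_i\frac{i!}{(k+i)!}\langle\alpha_i,\alpha_i\rangle_{\nu\omega^{k+2i}}$. Since $\nu\alpha=\sum_i\omega^{i}\nu\alpha_i$ and there are at most $n+1$ summands, the polarized triangle inequality (whose constant is dimensional and thus absorbed by $\lesssim$) reduces the proposition to a termwise estimate: for each $(\nu,\omega)$-primitive $\alpha'\in P^{p',q'}_{(\nu,\omega)}V$ with $k':=n-p'-q'-r=k+2i$, we need
\[
|\omega^{i}\nu\alpha'|^{2}_\omega\;\lesssim\;N^{r}\,\frac{i!}{(k+i)!}\,\langle\alpha',\alpha'\rangle_{\nu\omega^{k'}}.
\]

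For this key bound, the primitivity $\nu\omega^{k'+1}\alpha'=0$ forces the $\omega$-Lefschetz expansion $\nu\alpha'=\sum_{j=0}^{r}\omega^{j}\eta_j$ to truncate at level $j=r$: any larger index $j$ would place $\omega^{k'+1+j}$ in the injective range for $\omega$-primitives in bidegree $(p'+r-j,q'+r-j)$, forcing $\eta_j=0$. The classical K\"ahler formula then expresses $|\omega^{i}\nu\alpha'|^{2}_\omega$ as a short sum of Hodge--Riemann pairings $\langle\eta_j,\eta_j\rangle_{\omega^{\cdot}}$ with positive coefficients. I then invoke the pointwise inequality
\[
\nu\;\leq\;N^{r}\,\omega^{r}\qquad\text{as (strongly) positive $(r,r)$-forms,}
\]
which follows by induction from $\omega_i\leq N\omega$ because wedging with a positive $(1,1)$-form preserves the strong-positivity ordering. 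Combining this with the Hodge--Riemann positivity of $\langle\alpha',\alpha'\rangle_{\nu\omega^{k'}}$ on the primitive subspace yields the termwise bound, and summing over $i$ gives $|\nu\alpha|_\omega\lesssim N^{r/2}|\alpha|_{(\nu,\omega)}$.

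\textbf{Main obstacle.} The heart of the argument is the termwise estimate: the $(\nu,\omega)$- and $\omega$-Lefschetz decompositions do not align, so identifying the $\eta_j$ in terms of $\alpha'$ requires a careful re-expansion of $\alpha'$ in an $\omega$-Lefschetz basis before the positivity bound $\nu\leq N^{r}\omega^{r}$ can be applied; edge cases where $p'+q'+2r>n$ must be treated via $*_\omega$ to return to the low-bidegree range of the formula. The factor $N^{r/2}$ in the final statement arises as the square root of the $N^{r}$ in the pointwise form inequality; the diagonal case $\omega_1=\cdots=\omega_r=N\omega$, where the $(\nu,\omega)=(N^{r}\omega^{r},\omega)$-norm rescales as $N^{r/2}|\cdot|_{(\omega^{r},\omega)}$ while $|\nu\alpha|_\omega=N^{r}|\omega^{r}\alpha|_\omega$, confirms that the exponent $r/2$ is sharp.
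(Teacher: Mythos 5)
Your route is genuinely different from the paper's, and it stalls at the crucial step. The paper does \emph{not} prove Proposition~\ref{proposition: local 1} by a direct Lefschetz-algebra computation. It first proves a geometric version on a smooth projective variety $X$ (Proposition~\ref{proposition: local geometric 1}): $\|va\|_h\le C(X,h)\,\|a\|_{(v,h)}\sqrt{\int_X vh^{n-r}}$. That is established by a duality trick: write $\|va\|_h=\sup_{\|b\|_h=1}\int_X vab$, use $\int_Xvab=\int_Y ab$ (with $Y$ a complete-intersection representative of $v$) and $\|a\|_{(v,h)}=\|a|_Y\|_h$ from Lemma~\ref{lemma: restriction identities}, so the whole thing reduces to the norm of the restriction map $b\mapsto b|_Y$, which Lemma~\ref{lemma: frequently used} bounds by $\sqrt{\vol_h(Y)}$. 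Finally the linear-algebraic statement is recovered by running this on a torus $V/(\mathbb{Z}e_1+\cdots+\mathbb{Z}e_n)$ via Remark~\ref{remark: complex torus = linear}, and $\vol_\omega(Y)=\frac{1}{(n-r)!}\int\omega_1\cdots\omega_r\omega^{n-r}\lesssim N^r$ is where $\nu\le N^r\omega^r$ (in the sense of positivity of $(r,r)$-forms) enters — as a \emph{volume} estimate wedged against the positive form $\omega^{n-r}$, not as a comparison of Hermitian pairings.

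Your proposal has a genuine gap exactly where you flag the ``main obstacle.'' The reduction to the termwise inequality
\[
|\omega^{i}\nu\alpha'|^{2}_\omega\;\lesssim\;N^{r}\,\tfrac{i!}{(k+i)!}\,\langle\alpha',\alpha'\rangle_{\nu\omega^{k'}}
\]
for $(\nu,\omega)$-primitive $\alpha'$ is fine (with a dimensional loss from Cauchy--Schwarz), and the truncation $\nu\alpha'=\sum_{j\le r}\omega^j\eta_j$ of the $\omega$-Lefschetz expansion is correct. But the step ``invoke $\nu\le N^r\omega^r$ and Hodge--Riemann positivity to bound the termwise sum'' is not an argument. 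The left side becomes a sum of $\omega$-Hodge--Riemann pairings of the $\eta_j$'s, while the right side is a $(\nu,\omega)$-pairing of $\alpha'$; neither side is an integral of $\nu$ (or of $N^r\omega^r$) against a fixed strongly positive $(n-r,n-r)$-form, so the order relation $\nu\le N^r\omega^r$ cannot be applied directly. The two Lefschetz decompositions mix in a way that has no clean closed form (the $\eta_j$'s involve the inverses of various $\nu\omega^{\cdot}$ and $\omega^{\cdot}$ operators applied to $\alpha'$), and you do not exhibit the ``careful re-expansion'' you say is required. Your diagonal sanity check only tests the case $\nu=N^r\omega^r$, where the two Lefschetz decompositions \emph{do} align, so it does not detect this difficulty. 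As written, the proposal identifies the right exponent and the right positivity input, but leaves the core inequality unproved; the paper sidesteps the whole mixing problem by dualizing and restricting to hyperplane sections, and I would encourage you to look at Proposition~\ref{proposition: local geometric 1} and Lemma~\ref{lemma: frequently used} to see how the volume bound replaces the pairing comparison you were aiming for.
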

\begin{proposition}\label{proposition: local 2}
    Let $V,W$ be $\C$-vector spaces of dimension $n,m$ respectively and $\pi:V\rightarrow W$ a linear map. Let $\omega_1,\cdots,\omega_r$ be positive (1,1)-forms on $W$ and $\omega$ a positive (1,1)-form on $V$. Suppose that $\pi^*\omega_i\leq N\omega$ for $i=1,\cdots,r$. Let $\nu=\pi^*\omega_1\cdots\pi^*\omega_r$. For $\epsilon>0$, define $\nu_{\epsilon}=(\pi^*\omega_1+\epsilon\omega)\cdots(\pi^*\omega_s+\epsilon\omega)$. Let $\alpha\in\bigwedge^{p,q}V$ with $p+q\geq n-r$. Suppose that $\nu\alpha=0$, then
    \begin{align*}
        |\nu_\epsilon\alpha|_{(\nu_\epsilon,\omega)}^2\overset{N}{\lesssim}\epsilon|\alpha|_\omega^2.
    \end{align*}
\end{proposition}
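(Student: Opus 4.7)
The first step is to use the hypothesis $\nu\alpha=0$ to extract an explicit factor of $\epsilon$ from $\nu_\epsilon\alpha$. Expanding
$$\nu_\epsilon=\prod_{i=1}^{r}(\pi^*\omega_i+\epsilon\omega)=\sum_{S\subseteq\{1,\ldots,r\}}\epsilon^{|S|}\omega^{|S|}\prod_{i\notin S}\pi^*\omega_i,$$
the $S=\emptyset$ term is $\nu$ and annihilates $\alpha$, so
$$\nu_\epsilon\alpha=\epsilon\,\omega\wedge\rho_\epsilon,\qquad \rho_\epsilon:=\sum_{|S|\geq 1}\epsilon^{|S|-1}\omega^{|S|-1}\prod_{i\notin S}\pi^*\omega_i\wedge\alpha.$$
Using $\pi^*\omega_i\leq N\omega$ and the elementary estimate $|\eta\wedge\beta|_\omega\overset{N}{\lesssim}|\beta|_\omega$ for positive $(1,1)$-forms $\eta\leq N\omega$, one gets $|\rho_\epsilon|_\omega\overset{N}{\lesssim}|\alpha|_\omega$ pointwise, and hence $|\nu_\epsilon\alpha|_\omega\overset{N}{\lesssim}\epsilon|\alpha|_\omega$. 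What remains is to transfer this pointwise $\omega$-estimate to the Hodge--Riemann norm $|\cdot|_{(\nu_\epsilon,\omega)}$, absorbing at most one power of $\epsilon^{-1}$.

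The second step exploits the Lefschetz decomposition for the pair $(\nu_\epsilon,\omega)$ in the dual range. Since $\nu_\epsilon\alpha\in\bigwedge^{p+r,q+r}V$ with $(p+r)+(q+r)\geq n+r$, we can write
$$\nu_\epsilon\alpha=\sum_{i\geq 0}\nu_\epsilon\omega^{k+i}\xi_i^{(\epsilon)},\qquad \xi_i^{(\epsilon)}\in P^{n-q-r-i,\,n-p-r-i}_{(\nu_\epsilon,\omega)},\ k=p+q+r-n\geq 0,$$
and the defining formula (\ref{compute the metric by Q4}) for the metric gives
$$|\nu_\epsilon\alpha|^2_{(\nu_\epsilon,\omega)}=\sum_i\frac{(k+i)!}{i!}\langle\xi_i^{(\epsilon)},\xi_i^{(\epsilon)}\rangle_{\nu_\epsilon\omega^{k+2i}}\overset{N}{\lesssim}\sum_i|\xi_i^{(\epsilon)}|_\omega^2,$$
where the last inequality uses that $|\nu_\epsilon|_\omega\overset{N}{\lesssim}1$ uniformly in $\epsilon\in(0,1]$. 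The problem reduces to proving $\sum_i|\xi_i^{(\epsilon)}|_\omega^2\overset{N}{\lesssim}\epsilon\,|\alpha|_\omega^2$, i.e.\ that the Lefschetz ``inverse'' map $\nu_\epsilon\alpha\mapsto(\xi_i^{(\epsilon)})_i$, applied to the specific element $\epsilon\omega\rho_\epsilon$ produced in Step 1, has $\omega$-size at most $\sqrt{\epsilon}\,|\rho_\epsilon|_\omega$.

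This last estimate is where the real work lies and is the main obstacle. It encodes the degeneration of the pair $(\nu_\epsilon,\omega)$ as $\epsilon\to 0$: when $\pi$ is not surjective, $\nu$ is only semipositive, the primitive subspaces $P^{*,*}_{(\nu_\epsilon,\omega)}$ shift discontinuously at $\epsilon=0$, and the Lefschetz inverse blows up like $\epsilon^{-1/2}$ precisely in the directions of the degeneration. I would attack it by splitting $V=K\oplus W'$ with $K=\ker\pi$ and $\pi|_{W'}:W'\xrightarrow{\sim}\pi(V)\subseteq W$, decomposing all forms according to the resulting bi-grading, and noting that the $\pi^*\omega_i$ only act on the $W'$-legs while the $\epsilon\omega$-perturbation mixes both. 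The factor of $\omega$ extracted in Step 1 corresponds precisely to a ``$K$-leg'' not seen by $\nu$; tracking how it enters the Lefschetz decomposition of $(\nu_\epsilon,\omega)$ quantitatively produces exactly one power of $\epsilon^{-1}$, and combined with the $\epsilon^2$ from Step 1 this yields the advertised $\epsilon$-bound. A local analog of lemma \ref{lemma: eliminate equivalent global}, permitting one to drop factors $\pi^*\omega_i$ that are $\overset{N}{\approx}\omega$ from $\nu_\epsilon$ without changing the metric much, should be useful to reduce to the case where only the degenerate directions remain, at which point the estimate becomes transparent.
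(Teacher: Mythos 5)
Your Step~1 is sound: expanding $\nu_\epsilon$ and using $\nu\alpha=0$ to write $\nu_\epsilon\alpha=\epsilon\,\omega\wedge\rho_\epsilon$ with $|\rho_\epsilon|_\omega\overset{N}{\lesssim}|\alpha|_\omega$ is correct and is a natural way to start extracting the $\epsilon$. The proof breaks in Step~2. You bound
\begin{align*}
|\nu_\epsilon\alpha|^2_{(\nu_\epsilon,\omega)}
=\sum_i\frac{(k+i)!}{i!}\langle\xi_i^{(\epsilon)},\xi_i^{(\epsilon)}\rangle_{\nu_\epsilon\omega^{k+2i}}
\overset{N}{\lesssim}\sum_i|\xi_i^{(\epsilon)}|_\omega^2,
\end{align*}
and then declare that the problem reduces to showing $\sum_i|\xi_i^{(\epsilon)}|_\omega^2\overset{N}{\lesssim}\epsilon|\alpha|_\omega^2$. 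That upper bound is lossy by a factor of $\epsilon$, and the reduced statement is in fact \emph{false}. Take $V=\C^2$ with coordinates $z_1,z_2$, $W=\C$, $\pi(z_1,z_2)=z_1$, $\omega=i(dz_1\wedge d\bar z_1+dz_2\wedge d\bar z_2)$, $r=1$, $\omega_1=i\,dw\wedge d\bar w$, so $\nu=i\,dz_1\wedge d\bar z_1$ and $\nu_\epsilon=i(1+\epsilon)dz_1\wedge d\bar z_1+i\epsilon\,dz_2\wedge d\bar z_2$. With $p=1,q=0$ and $\alpha=dz_1$ one has $\nu\alpha=0$, $|\alpha|_\omega^2=1$, the Lefschetz decomposition of $\nu_\epsilon\alpha$ has the single piece $\xi_0=dz_1$ (the whole of $\bigwedge^{1,0}$ is primitive here), and $|\nu_\epsilon\alpha|^2_{(\nu_\epsilon,\omega)}=\langle dz_1,dz_1\rangle_{\nu_\epsilon}=\epsilon$. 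So the proposition holds (with equality up to constants), but your reduced target becomes $1=|\xi_0|_\omega^2\overset{N}{\lesssim}\epsilon$, which is false. The passage from $\langle\xi_i,\xi_i\rangle_{\nu_\epsilon\omega^{k+2i}}$ to $|\xi_i|_\omega^2$ discards exactly the degeneration of $\nu_\epsilon$ that produces the $\epsilon$ you are after; one must keep the form $\langle\cdot,\cdot\rangle_{\nu_\epsilon\omega^{k+2i}}$ in play rather than replace it by the $\omega$-norm. On top of this, your Step~3 (the $K\oplus W'$ splitting and the bookkeeping of ``$K$-legs'') is a sketch of a strategy, not an argument; as written there is no proof that ``the estimate becomes transparent.''

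For contrast, the paper does not try to do this estimate intrinsically in linear algebra at all. It passes to a geometric statement on a product $X\times X'$ of abelian varieties via remark~\ref{remark: complex torus = linear}, replaces the positive forms by ample rational classes, rescales so that they become very ample, and then controls $\|v_\epsilon a\|_{(v_\epsilon,h)}$ by a ``triangle inequality'' for the $(\cdot,\cdot)_{(v,h)}$-metric obtained from simple normal crossing representatives of $h_i+\epsilon h$ (lemma~\ref{lemma: triangle inequality}, corollaries~\ref{corollary: triangle inequality 1}--\ref{corollary: triangle inequality 2}). The input from analysis (the log cut-off lemma~\ref{lemma: log cut-off} and the pencil argument in lemma~\ref{lemma: singular divisor}) is what lets one split a singular hyperplane section into its components without losing a constant, and the volume bound $\vol_h(Y_I)\lesssim\epsilon$ for every $I$ containing at least one ``new'' component is what supplies the factor of $\epsilon$. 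This is a genuinely different route from yours: the $\epsilon$ does not come from the algebra of the Lefschetz decomposition but from a volume estimate on intersections of divisors representing $h_i+\epsilon h$. If you want to make your purely algebraic Step~3 work, you would at minimum need a sharper upper bound than $\sum_i|\xi_i|_\omega^2$ in Step~2 (one that retains the $\nu_\epsilon$-weight) before the $K\oplus W'$ bookkeeping can have a chance of closing.
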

The proofs of these two simple looking propositions are in fact quite technical. They are both proven by the corresponding geometric statements (proposition \ref{proposition: local geometric 1} and corollary \ref{corollary: perturb nef classes 2} below). The two linear algebraic propositions above follow from the corresponding geometric statements applied to abelian varieties (thanks to remark \ref{remark: complex torus = linear}).

The proofs of the geometric statements (proposition \ref{proposition: local geometric 1} and corollary \ref{corollary: perturb nef classes 2}) are quite involved, especially corollary \ref{corollary: perturb nef classes 2}. Some nontrivial techniques from analysis are even used (e.g. lemma \ref{lemma: log cut-off}). It would be interesting to see whether elementary proofs of these linear algebraic facts could be found.

As a warm up, we kill the following simple lemma.
\begin{lemma}\label{lemma: eliminate equivalent local}
    Let $V$ be an $n$-dimensional vector space, $0\leq s\leq r\leq n$ nonnegative integers. Let $\omega_1,\cdots,\omega_r,\omega$ be positive (1,1)-forms on $V$, $\nu=\omega_1\cdots\omega_s,\ \mu=\omega_1\cdots\omega_r$. Suppose that $\omega\leq N\omega_i,\omega_i\leq N\omega$ for $i=s+1,\cdots,r$. Then
    \begin{align}
        |\alpha|_{(\nu,\omega)}\overset{N}{\approx}|\alpha|_{(\mu,\omega)}
    \end{align}
    for all $\alpha\in H^{p,q}(X)$ with $p+q\leq n-r$ or $p+q\geq n+r$.
\end{lemma}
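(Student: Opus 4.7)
The plan is to reduce this local linear-algebraic statement to the already-proven global Lemma \ref{lemma: eliminate equivalent global} via Remark \ref{remark: complex torus = linear}, which identifies the exterior-algebra set-up on $V$ with honest cohomology on an abelian variety $V/\Lambda$.

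First I would observe that the map $(\omega_1,\ldots,\omega_r,\omega,\alpha)\mapsto|\alpha|_{(\nu,\omega)}$ is continuous on the open set of Hodge--Riemann pairs (cf.\ Remark \ref{remark: continuity}), and the hypothesis $\omega\leq N\omega_i,\ \omega_i\leq N\omega$ is preserved under small perturbations (up to replacing $N$ by $N+\varepsilon$). So by approximation it suffices to prove the lemma under a joint rationality assumption: fix a lattice $\Lambda\subset V$ and assume each of $\omega_1,\ldots,\omega_r,\omega$ is rational on $\Lambda$, in the sense that the associated alternating form on $V_\R$ takes values in $\Q$ on $\Lambda\times\Lambda$. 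Such rational positive forms are dense among the positive $(1,1)$-forms on $V$. Clearing a common denominator and rescaling $\Lambda$, we may further assume integrality, and a final volume normalization (using the rescaling laws (\ref{rescaling1})--(\ref{rescaling2}) to keep the statement invariant) allows us to assume $\int_{V/\Lambda}\omega^n/n!=1$.

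Under these reductions $X:=V/\Lambda$ is polarized by $\omega$, hence an abelian variety and in particular a smooth projective variety. Each class $[\omega_i],[\omega]$ lies in $\mathrm{NS}(X)_\R\cap\Amp(X)\subset\Amp(X)$, and the pointwise inequalities $N\omega-\omega_i,\ N\omega_i-\omega\geq 0$ upgrade (translation-invariance together with positivity) to $N[\omega]-[\omega_i],\ N[\omega_i]-[\omega]\in\Amp(X)$ for $i=s+1,\ldots,r$. Remark \ref{remark: complex torus = linear} now equates the local metrics with the global ones: under the canonical isomorphism $\bigwedge^{p,q}V\cong H^{p,q}(X),\ \alpha\mapsto a$, we have $|\alpha|_{(\nu,\omega)}=\|a\|_{([\nu],[\omega])}$ and $|\alpha|_{(\mu,\omega)}=\|a\|_{([\mu],[\omega])}$. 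Lemma \ref{lemma: eliminate equivalent global}, applied on $X$ with $h_i:=[\omega_i]$ and $h:=[\omega]$, then yields
\[\|a\|_{([\nu],[\omega])}\overset{N}{\approx}\|a\|_{([\mu],[\omega])},\]
which, combined with the continuity in the previous paragraph (passing back to the original non-rational forms), gives the desired local estimate.

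The main obstacle I anticipate is the bookkeeping in the approximation step: one must jointly rationalize all of $\omega_1,\ldots,\omega_r,\omega$ while preserving the chain $\omega\leq N\omega_i\leq N^2\omega$ with the same (or an arbitrarily close) $N$, and then verify that the implicit constants hidden in $\overset{N}{\approx}$ survive the limit back to the original forms. Once this is arranged, the reduction to Lemma \ref{lemma: eliminate equivalent global} is essentially formal; no new inequalities on $\bigwedge^{p,q}V$ need to be proven directly.
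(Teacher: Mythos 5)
Your core strategy is exactly the paper's: identify $\bigwedge^{*,*}V$ with $H^{*,*}(X)$ for an abelian variety $X=V/\Lambda$ via Remark \ref{remark: complex torus = linear}, normalize $\int_X\omega^n/n!=1$ using the rescaling laws, and invoke Lemma \ref{lemma: eliminate equivalent global}. However, the approximation step contains a genuine gap. You fix a lattice $\Lambda\subset V$ and then assert that the positive $(1,1)$-forms rational on $\Lambda$ are dense among \emph{all} positive $(1,1)$-forms on $V$. That is false in general: the rational $(1,1)$-forms are precisely $N^1(V/\Lambda)_{\Q}$, whose real span is $N^1(V/\Lambda)_{\R}$, a subspace of $H^{1,1}(V/\Lambda,\R)$ of dimension equal to the Picard rank $\rho$. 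Density in all of $H^{1,1}(V/\Lambda,\R)\cong\{\text{real }(1,1)\text{-forms on }V\}$ holds only when $\rho$ is maximal, i.e.\ $\rho=n^2$; for a generic lattice there may be no positive rational $(1,1)$-forms whatsoever, so the approximation cannot even start.

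The paper's one-line proof contains exactly the missing ingredient: take $X$ to be an abelian variety of \emph{maximum Picard rank} (e.g.\ $E^n$ for a CM elliptic curve $E$, transported to $V$ along a $\C$-linear isomorphism $V\cong T_x^*X$). Then $N^1(X)_{\R}=H^{1,1}(X,\R)$, hence $\Amp(X)=\Kah(X)$, so \emph{every} positive $(1,1)$-form on $V$ is already an ample class on $X$. At that point Lemma \ref{lemma: eliminate equivalent global} applies directly to $h_i=[\omega_i]$, $h=[\omega]$ with no rationalization at all, and your continuity/limiting argument becomes unnecessary. (Your normalization and the transfer of the inequalities $N\omega-\omega_i\geq0$, $N\omega_i-\omega\geq0$ to $Nh-h_i,\,Nh_i-h\in\Amp(X)$ are otherwise fine.) If you insist on the rationalization route you must at minimum choose $\Lambda$ with maximal Picard rank before claiming density --- but once you do, the rationalization is superfluous since Lemma \ref{lemma: eliminate equivalent global} is already stated for arbitrary ample classes.
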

\begin{proof}
    Apply lemma \ref{lemma: eliminate equivalent global} to an abelian variety of maximum Picard rand and use remark \ref{remark: complex torus = linear}.
\end{proof}

\subsection{Proof of proposition \ref{proposition: local 1}}
\begin{proposition}\label{proposition: local geometric 1}
    Let $X$ be a smooth projective variety of dimension $n$. $h_1,\cdots,h_r,h\in\Amp(X)$. Let $v=h_1\cdots h_r$. Then for all $a\in H^{p,q}(X)$ with $p+q\leq n-r$, we have
    \begin{align}\label{eq41}
        \|va\|_h\leq C\|a\|_{(v,h)}\sqrt{\int_Xvh^{n-r}}
    \end{align}
    for some $C>0$ depending on $X$ and $h$.
\end{proposition}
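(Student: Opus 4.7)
The plan is to combine the identity $\|a\|_{(v,h)}=\|a|_Y\|_h$ from Lemma \ref{lemma: restriction identities} with the duality inequality (\ref{eq300}) applied on a complete intersection $Y$. First I would reduce to the case that $h_1,\ldots,h_r$ are very ample: by Remark \ref{remark: continuity} and continuity of the remaining quantities in $(h_1,\ldots,h_r)$, both sides of (\ref{eq41}) depend continuously on $(h_1,\ldots,h_r)\in\Amp(X)^r$, so we may approximate the $h_i$ by $\Q$-rational ample classes; the rescaling laws (\ref{rescaling1})-(\ref{rescaling2}), combined with the transformation $\int_X(\lambda^rv)h^{n-r}=\lambda^r\int_Xvh^{n-r}$, show that replacing each $h_i$ by $Mh_i$ scales both sides of (\ref{eq41}) by the same factor $M^r$, so after passing to a sufficiently divisible multiple we may assume that each $h_i$ is very ample.

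In this setup I would pick transversally intersecting smooth divisors $Y_1,\ldots,Y_r$ representing $h_1,\ldots,h_r$ and let $Y=Y_1\cap\cdots\cap Y_r$, a smooth subvariety of dimension $n-r$ satisfying $[Y]=v$. For any $c\in H^{n-p-r,n-q-r}(X)$, the identity $[Y]=v$ yields
\[\int_X(va)\cdot c=\int_Y(a|_Y)\cdot(c|_Y).\]
Applying (\ref{eq300}) on the $(n-r)$-dimensional manifold $Y$ and using Lemma \ref{lemma: restriction identities} to rewrite $\|a|_Y\|_h$ as $\|a\|_{(v,h)}$, this is bounded by $\|a\|_{(v,h)}\cdot\|c|_Y\|_h$. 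Lemma \ref{lemma: frequently used} applied to the inclusion $Y\subset X$ then supplies the crucial estimate $\|c|_Y\|_h\leq C_0\|c\|_h\sqrt{\vol_h(Y)}$, with $C_0=C_0(X,h)$ \emph{independent} of the $h_i$ and of the particular choice of $Y$.

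To conclude, I would invoke the Hodge-theoretic duality
\[\|va\|_h=\sup_{0\neq c\in H^{n-p-r,n-q-r}(X)}\frac{|\int_X(va)\cdot c|}{\|c\|_h},\]
which follows from (\ref{eq300}) together with the observation that the supremum is attained at $c=[\overline{*\beta}]$, where $\beta$ is the $\omega$-harmonic representative of $va$ and $\omega$ is any K\"ahler form representing $h$. Combining the two estimates gives $\|va\|_h\leq C_0\|a\|_{(v,h)}\sqrt{\vol_h(Y)}$, and the identity $\vol_h(Y)=\frac{1}{(n-r)!}\int_Xvh^{n-r}$ then proves (\ref{eq41}) after absorbing $\sqrt{(n-r)!}$ into the constant. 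The main subtle point — and the place where a naive argument would fail — is keeping the constant independent of the $h_i$, since otherwise the continuity step would collapse as some $h_i$ approaches the boundary of $\Amp(X)$; this uniformity is precisely what the dependence $C_0=C_0(X,h)$ in Lemma \ref{lemma: frequently used} delivers, and it is the reason the quantitative principle of Section 1 applies here.
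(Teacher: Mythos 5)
Your proof is correct and takes essentially the same approach as the paper's: reduce to very ample rational classes by continuity (Remark \ref{remark: continuity}) and rescaling, pass to the complete intersection $Y$ via Lemma \ref{lemma: restriction identities}, dualize $\|va\|_h$ against $H^{n-p-r,n-q-r}(X)$ using $\int_X vab=\int_Y ab$ and (\ref{eq300}), and control $\|c|_Y\|_h$ by Lemma \ref{lemma: frequently used} together with the identity $(n-r)!\vol_h(Y)=\int_X vh^{n-r}$. The only cosmetic difference is that you spell out the duality formula and where its supremum is attained, which the paper states without justification.
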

\begin{proof}
    By continuity it suffices to prove the proposition assuming $h_1,\cdots,h_r\in\Amp(X)\cap N^1(X)_\Q$. Replacing $h_1,\cdots, h_r$ by $Mh_1,\cdots,Mh_r$ for some $M$, we may further assume that $h_1,\cdots, h_r$ are very ample (one can check that replacing $h_i$ by $Mh_i$, both sides of the inequality (\ref{eq41}) are scaled by $M^r$). Let $Y_1,\cdots, Y_r$ be transversally intersecting smooth divisors representing $h_1,\cdots,h_r$. Let $Y=Y_1\cap\cdots\cap Y_r$. Then $\|a\|_{(v,h)}=\|a|_Y\|_h$ by lemma \ref{lemma: restriction identities}. So (\ref{eq41}) is equivalent to
    \begin{align}\label{eq411}
        \|va\|_h\leq C\|a|_Y\|_h\sqrt{\int_Xvh^{n-r}}.
    \end{align}
    To prove this inequality we use the dual point of view. Let $p'=n-p-r,\ q'=n-q-r$, then
    \begin{align*}
        \|va\|_h=\sup_{b\in H^{p',q'}(X),\|b\|_h=1}\int_Xvab
    \end{align*}
    \begin{align*}
        \|a|_Y\|_h=\sup_{b\in H^{p',q'}(Y),\|b\|_h=1}\int_Yab.
    \end{align*}
    Note that
    \[\int_Xvab=\int_Yab\]
    for all $a\in H^{p,q}(X),b\in H^{p',q'}(X)$. So to prove (\ref{eq411}) it suffices to prove
    \begin{align}
        \|b|_Y\|_h\leq C\|b\|_h\sqrt{\int_Xvh^{n-r}}
    \end{align}
    for all $b\in H^{p',q'}(X)$. This follows from lemma \ref{lemma: frequently used}, since
    \[(n-r)!\mathrm{Vol}_\omega(Y)=\int_Y\omega^{n-r}=\int_Xvh^{n-r}.\]
\end{proof}
Now we can prove proposition \ref{proposition: local 1}.
\begin{proof}(of proposition \ref{proposition: local 1})
    Let $e_1,\cdots,e_n$ be an orthonormal basis of $V$ with respect to $\omega$. Let $X=V/\Z e_1+\cdots+\Z e_n$. Then apply proposition \ref{proposition: local geometric 1}.
\end{proof}

\subsection{Proof of proposition \ref{proposition: local 2}}
We first prove a `triangle inequality', which is interesting on its own right.
\begin{lemma}\label{lemma: triangle inequality}
    Let $X$ be a smooth projective variety, $h,w\in\Amp(X)$. Assume that $h$ is very ample and is represented by a simple normal crossing divisor $Y_1+Y_2$ with two components, then for all $a\in H^p(X)$ with $p\geq n-1$, we have
    \begin{align*}
        \|ha\|_{(h,w)}^2\leq\|a|_{Y_1}\|_w^2+\|a|_{Y_2}\|_w^2.
    \end{align*}
\end{lemma}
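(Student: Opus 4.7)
The plan is to prove the equivalent dual inequality via Poincar\'e duality, then establish the dual inequality by a degeneration of divisors inside the linear system $|h|$.

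First, the $*$-operator associated to the metric $\|\cdot\|_{(h,w)}$ (cf.\ Section 5) makes $\|\cdot\|_{(h,w)}$ self-dual with respect to the Poincar\'e pairing, so
\[\|ha\|_{(h,w)}=\sup_{c\in H^{n-p-1,n-q-1}(X)\setminus\{0\}}\frac{\bigl|\int_X ha\wedge c\bigr|}{\|c\|_{(h,w)}}.\]
Splitting $h=[Y_1]+[Y_2]$ in $H^{1,1}(X,\R)$, applying $(\ref{eq300})$ on each component $Y_j$, and using Cauchy--Schwarz in $\R^2$ gives
\[\bigl|\int_X ha\wedge c\bigr|=\bigl|\sum_{j=1}^{2}\int_{Y_j}a|_{Y_j}\wedge c|_{Y_j}\bigr|\leq\sqrt{\sum_{j=1}^{2}\|a|_{Y_j}\|_w^2}\cdot\sqrt{\sum_{j=1}^{2}\|c|_{Y_j}\|_w^2}.\]
Hence the lemma follows once we establish the dual inequality
\[\sum_{j=1}^{2}\|c|_{Y_j}\|_w^2\leq\|c\|_{(h,w)}^2\qquad\text{for every }c\in H^{n-p-1,n-q-1}(X).\]

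Second, for the dual inequality I would use $(\ref{restriction identity 1})$: for any smooth divisor $Y'\in|h|$,
\[\|c\|_{(h,w)}^2=\|c|_{Y'}\|_w^2=\int_{Y'}|\gamma_{Y'}|_w^2\frac{\omega^{n-1}}{(n-1)!},\]
where $\omega$ is a K\"ahler form on $X$ representing $w$ and $\gamma_{Y'}\in\mathcal{H}^{p',q'}_\omega(Y')$ is the $w$-harmonic representative of $c|_{Y'}$, supplied by theorem \ref{theorem: harmonic space}.

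Third, I would deform $Y'$ inside $|h|$. Since $h$ is very ample and $Y_1+Y_2\in|h|$, there is a one-parameter family $\{Y_t\}_{t\in\Delta}\subset|h|$ with $Y_t$ smooth for $t\ne 0$ and $Y_0=Y_1\cup Y_2$. For every $t\ne 0$ the quantity $\int_{Y_t}|\gamma_{Y_t}|_w^2\,\omega^{n-1}/(n-1)!$ equals the constant $\|c\|_{(h,w)}^2$. Extracting a weak $L^2$ limit $\gamma_\infty$ on $Y_1\cup Y_2$ as $t\to 0$ whose restriction to each $Y_j$ still represents $c|_{Y_j}$ in $H^{p',q'}(Y_j)$, and applying Fatou-type lower semicontinuity, I obtain
\[\|c\|_{(h,w)}^2\geq\sum_{j=1}^{2}\int_{Y_j}|\gamma_\infty|_{Y_j}|_w^2\frac{\omega^{n-1}}{(n-1)!}\geq\sum_{j=1}^{2}\|c|_{Y_j}\|_w^2,\]
the last inequality being $w$-harmonic minimality on each $Y_j$.

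The main obstacle is the degeneration step: rigorously controlling the $L^2$ limit of the harmonic representatives $\gamma_{Y_t}$ as the smooth $Y_t$ specializes to the reducible SNC divisor $Y_1\cup Y_2$, and checking that its restriction to each component still represents the correct cohomology class $c|_{Y_j}$. This is precisely where the analytic techniques alluded to in the outline---notably logarithmic cutoffs in the spirit of lemma \ref{lemma: log cut-off}---become essential: they allow one to rule out any concentration of $L^2$ mass near the singular locus $Z=Y_1\cap Y_2$ during the degeneration, securing both the Fatou-type bound above and the cohomological identification of the limit.
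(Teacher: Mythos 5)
Your duality step turns the lemma into the \emph{reverse} inequality in complementary degree, and that reversed inequality is false. You reduce to
\[\sum_{j=1}^{2}\|c|_{Y_j}\|_w^2\leq\|c\|_{(h,w)}^2\qquad\text{for all }c\text{ of degree}\leq n-1.\]
But the Remark immediately following the lemma states the opposite direction in that range, namely $\|c\|_{(h,w)}^2\leq\|c|_{Y_1}\|_w^2+\|c|_{Y_2}\|_w^2$, and the inequality is genuinely strict in general. Concretely, take $X=\P^1\times\P^2$ with $\alpha=\pi_1^*\O(1)$, $\beta=\pi_2^*\O(1)$, $h=w=\alpha+\beta$, $Y_1=\{\mathrm{pt}\}\times\P^2$, $Y_2=\P^1\times(\text{line})$, and $c=2\alpha-\beta\in H^{1,1}(X)$. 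Then $c$ is $(h,w)$-primitive (since $c h^2=(x+2y)\alpha\beta^2=0$ for $c=x\alpha+y\beta$ forces $x=-2y$), and $\|c\|_{(h,w)}^2=-\int_X c^2 h=3$. On $Y_1\cong\P^2$, $c|_{Y_1}=-\beta|_{Y_1}$ gives $\|c|_{Y_1}\|_w^2=1$; on $Y_2\cong\P^1\times\P^1$, $c|_{Y_2}=\O(2,-1)$, $w|_{Y_2}=\O(1,1)$, and the Lefschetz decomposition gives $\|c|_{Y_2}\|_w^2=\tfrac{9}{2}+\tfrac{1}{2}=5$. Hence $\sum_j\|c|_{Y_j}\|_w^2=6>3=\|c\|_{(h,w)}^2$. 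So the ``dual inequality'' you need fails, and the argument cannot be repaired by a constant since the lemma is stated with none.

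The structural point is that the $(h,w)$-metric (equivalently the harmonic $L^2$ norm on a smooth $Y'\in|h|$) does \emph{not} dominate the sum of componentwise harmonic norms when $Y'$ degenerates to $Y_1\cup Y_2$; in fact the degeneration only gives a one-sided bound in the other direction. This is exactly what the paper exploits: Lemma \ref{lemma: singular divisor} establishes the upper bound $\|ha\|_{(h,w)}^2\leq\inf_{[\alpha]=a}\int_{Y_1+Y_2}|\alpha|_\omega^2\,\omega^{n-1}/(n-1)!$ by specialization, and then Lemma \ref{lemma: log cut-off} is used to construct, for any $\epsilon>0$, a single representative $\alpha'$ of $a$ whose restriction to each $Y_j$ is within $\epsilon$ of the harmonic representative of $a|_{Y_j}$ --- the log cut-off lets one correct $\alpha$ separately on $Y_1$ and $Y_2$ without interference near $Y_1\cap Y_2$. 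Your Step 3 was also problematic independently of the sign issue: a weak $L^2$ limit of $\gamma_{Y_t}$ on the degenerating family need not restrict to the correct cohomology class on each $Y_j$ (mass can concentrate on $Y_1\cap Y_2$), which is precisely the loss the log cut-off is designed to preclude, but in the paper it is used on the singular fiber directly rather than to control a limit of harmonic forms.
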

\begin{remark}
    In the range $p\leq n-1$ the inequality is $\|a\|_{(h,w)}^2\leq\|a|_{Y_1}\|_w^2+\|a|_{Y_2}\|_w^2$. If $Y_1,Y_2$ are very ample this can be written as $\|a\|_{(h_1+h_2,w)}^2\leq\|a\|_{(h_1,w)}^2+\|a\|_{(h_2,w)}^2$, which then holds for all $h_1,h_2\in\Amp(X)$ by standard continuity argument. This is why we call it a `triangle inequality'. It would be interesting to see whether this inequality could be generalized to K\"ahler classes.
\end{remark}
To prove lemma \ref{lemma: triangle inequality} we need two lemmas.
\begin{lemma}\label{lemma: log cut-off}
    Let $(X,\omega)$ be a compact K\"ahler manifold, $Y$ a smooth divisor on $X$. Let $\alpha\in\A^p(X)$ be a smooth $d$-closed form. Then for all $\epsilon>0$, there exists $\beta\in\A^{p-1}(X)$ that vanishes in a neighborhood of $Y$ such that
    \begin{align*}
        \|\alpha+d\beta\|_\omega\leq\|[\alpha]\|_\omega+\epsilon
    \end{align*}
\end{lemma}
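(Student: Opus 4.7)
The plan is to reduce to the harmonic representative and then kill the correction by a logarithmic cutoff. Let $\alpha_H$ denote the harmonic representative of $[\alpha]$, so that $\|\alpha_H\|_\omega = \|[\alpha]\|_\omega$, and choose by Hodge theory a smooth $(p-1)$-form $\gamma$ on $X$ with $\alpha = \alpha_H - d\gamma$; in particular $\gamma$ and $d\gamma$ are uniformly bounded on the compact manifold $X$. I would try $\beta_\delta := \phi_\delta \gamma$, where $\phi_\delta : X \to [0,1]$ is smooth, vanishes in a neighborhood of $Y$, and equals $1$ outside a slightly larger neighborhood. Then $\beta_\delta$ vanishes near $Y$, and a direct expansion gives
\[
\alpha + d\beta_\delta \;=\; \alpha_H \;-\; (1-\phi_\delta)\, d\gamma \;+\; d\phi_\delta \wedge \gamma,
\]
so it suffices to drive both error terms to $0$ in $L^2$ as $\delta \to 0$. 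The first is a bounded smooth form times the characteristic function of a shrinking neighborhood of $Y$, so dominated convergence handles it immediately; for the second, since $\gamma$ is uniformly bounded, it is enough to arrange $\|d\phi_\delta\|_{L^2(\omega)} \to 0$.

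The main technical step is the construction of $\phi_\delta$ with $\|d\phi_\delta\|_{L^2(\omega)} \to 0$, which is exactly where the assumption that $Y$ is a divisor (real codimension $2$) enters. Let $r$ denote the distance to $Y$ in some fixed auxiliary smooth Riemannian metric on $X$ (equivalently, $r = |s|$ for a local defining section $s$ of $Y$), and take a smooth $\chi : \R \to [0,1]$ with $\chi = 1$ on $(-\infty, \tfrac{1}{2}]$ and $\chi = 0$ on $[1, \infty)$. Define
\[
\phi_\delta := \chi\!\left(\frac{\log(1/r)}{\log(1/\delta)}\right)
\]
in a tubular neighborhood of $Y$, extended by $1$ elsewhere. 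Then $\phi_\delta = 0$ for $r \leq \delta$, $\phi_\delta = 1$ for $r \geq \sqrt{\delta}$, and $|d\phi_\delta|_\omega \lesssim 1/(r \log(1/\delta))$. Using tubular coordinates $(r, \theta, y)$ near $Y$ in which the volume form is comparable to $r\, dr\, d\theta\, d\mathrm{vol}_Y$, one computes
\[
\int_X |d\phi_\delta|_\omega^2\, \frac{\omega^n}{n!} \;\lesssim\; \int_\delta^{\sqrt{\delta}} \frac{1}{r^2 \log^2(1/\delta)}\, r\, dr \;=\; \frac{1}{2\log(1/\delta)} \;\longrightarrow\; 0.
\]

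The only real obstacle is this logarithmic capacity estimate, which relies crucially on the fact that $\int dr/r$ diverges only logarithmically in real codimension $2$: this is exactly the marginal case where the logarithm-of-the-logarithm rescaling buys decay. For a higher codimension subvariety the analogous estimate would be much softer, while in real codimension $1$ it would fail and genuine boundary data would appear. Everything else (existence of the primitive $\gamma$ and applicability of dominated convergence) is standard Hodge theory on the compact K\"ahlerian manifold $X$, so the whole argument reduces to this one cutoff construction.
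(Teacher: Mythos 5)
Your proof is correct and follows essentially the same route as the paper's: both reduce to the harmonic representative, multiply the primitive $\gamma$ by a logarithmic cutoff vanishing near $Y$, and use the fact that in real codimension $2$ the $L^2$ norm of $d\phi_\delta$ decays like $|\log\delta|^{-1/2}$. The only cosmetic differences are the exact choice of cutoff scales (the paper uses the annulus $[\delta^2 r_0,\delta r_0]$ where you use $[\delta,\sqrt{\delta}]$) and that you invoke dominated convergence for the $(1-\phi_\delta)d\gamma$ term where the paper estimates its $L^2$ norm by $O(\delta)$; these are equivalent.
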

\begin{proof}
    The proof uses the `log cut-off' trick. Let $\alpha_H$ be the harmonic representative of $[\alpha]$. Then there exits $\gamma\in\A^{p-1}(X)$ such that $\alpha_H=\alpha+d\gamma$. Define $\beta=\rho\gamma$, where $\rho\in C^{\infty}(X)$ is defined as follow. Fix $r>0$ small so that $Y$ has a tubular $r$-neighborhood in $X$. Define
    \begin{align*}
        \tilde{\rho}(x)=\begin{cases}
            0&d(x,Y)<\delta^2r\\
            \frac{\log\left(d(x,Y)/(\delta^2r)\right)}{|\log\delta|}&\delta^2r\leq d(x,Y)<\delta r\\
            1&d(x,Y)\geq\delta r
        \end{cases}
    \end{align*}
    where $0<\delta\ll 1$. We perturb $\tilde{\rho}$ a little to get $\rho$ so that $\rho$ is smooth. Then
    \[\|d\rho\|_\omega^2\leq C\left|\log\delta\right|^{-1}\]
    \[\|\rho-1\|_\omega^2\leq C\delta^2\]
    For some $C>0$ independent of $\delta$. Then
    \[\|d[(\rho-1)\gamma]\|_\omega=\|(d\rho)\gamma+(\rho-1)d\gamma\|_\omega\leq C'|\log\delta|^{-\frac{1}{2}}.\]
    Take $\delta$ small enough so that $C'|\log\delta|^{-\frac{1}{2}}<\epsilon$. Then
    \[\|\alpha+d\beta\|_\omega=\|\alpha_H+d[(\rho-1)\gamma]\|_\omega\leq\|\alpha_H\|_\omega+\epsilon\]
    as desired.
\end{proof}
\begin{lemma}\label{lemma: singular divisor}
    Let $X$ be a smooth projective variety, $h,w\in\Amp(X)$. Assume that $h$ is very ample and is represented by a simple normal crossing divisor $Y$. Let $\omega$ be a K\"ahler form representing $w$. Then for all $a\in H^p(X)$ with $p\geq n-1$,
    \begin{align}\label{eq48}
        \|ha\|_{(h,w)}^2\leq \inf_{[\alpha]=a}\int_Y|\alpha|_\omega^2\frac{\omega^{n-1}}{(n-1)!}
    \end{align}
    where $\alpha$ ranges over all $d$-closed smooth $p$-forms on $X$ representing $a$.
\end{lemma}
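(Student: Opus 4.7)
The strategy is to approximate the possibly singular $Y$ by smooth divisors in the linear system $|h|$, apply lemma \ref{lemma: restriction identities} to each smooth member, and pass to the limit. Since $h$ is very ample, Bertini's theorem guarantees that smooth divisors form a Zariski dense open subset of $|h|$, so $Y$ can be realized as the central fiber of a holomorphic family $(Y_t)_{t\in\Delta}$ of divisors in $|h|$, parametrized by a small disk $\Delta\subset\C$, with $Y_0=Y$ and $Y_t$ smooth for $t\ne 0$.

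For each $t\ne 0$, lemma \ref{lemma: restriction identities} (applied with $r=1$, $h_1=h$ and the Hodge--Riemann pair $(h,w)$) gives $\|ha\|_{(h,w)}^2 = \|a|_{Y_t}\|_w^2$. For any $d$-closed smooth representative $\alpha$ of $a$, the restriction $\alpha|_{Y_t}$ represents $a|_{Y_t}$ in $H^{p,q}(Y_t)$, and the harmonic-minimization characterization of the norm yields
\[
\|ha\|_{(h,w)}^2=\|a|_{Y_t}\|_w^2 \le \|\alpha|_{Y_t}\|_w^2 = \int_{Y_t} |\alpha|_\omega^2 \frac{\omega^{n-1}}{(n-1)!}.
\]

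It remains to pass to the limit $t\to 0$. The key claim is that for every smooth $(n-1,n-1)$-form $\phi$ on $X$, $\int_{Y_t}\phi\to\int_Y\phi$. Fixing a Hermitian metric $\|\cdot\|$ on $\O(h)$ and local holomorphic defining sections $s_t$ of $Y_t$ depending holomorphically on $t$ with $s_0$ defining $Y$, the Poincar\'e--Lelong formula writes the integration current as $[Y_t]=\frac{\sqrt{-1}}{\pi}\partial\bar\partial\log\|s_t\|^2+c_1(\O(h),\|\cdot\|)$, where the Chern form is independent of $t$. Since $s_t\to s_0$ smoothly, $\log\|s_t\|^2\to\log\|s_0\|^2$ in $L^1_{\mathrm{loc}}$, hence $[Y_t]\to[Y]$ weakly as currents, and pairing against $\phi=|\alpha|_\omega^2\omega^{n-1}/(n-1)!$ gives
\[
\|ha\|_{(h,w)}^2\le\int_Y|\alpha|_\omega^2\frac{\omega^{n-1}}{(n-1)!}.
\]
Taking the infimum over $d$-closed representatives $\alpha$ of $a$ concludes the proof.

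The main obstacle is precisely this last step: because the form $|\alpha|_\omega^2\omega^{n-1}$ is not closed, convergence cannot be deduced from cohomological invariance (even though all $Y_t$ are cohomologous), and genuinely requires weak convergence of the integration currents. The Poincar\'e--Lelong route above is the cleanest option; alternatively one may localize and use that near a smooth point of $Y$ the divisors $Y_t$ are graphs over $Y$, giving convergence by dominated convergence, while the contribution from a shrinking tubular neighborhood of $\mathrm{Sing}(Y)$ tends to zero since $\phi$ is uniformly bounded on $X$.
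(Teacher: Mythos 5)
Your proof follows the paper's strategy exactly: realize $Y$ as a degenerate member of a family in $|h|$ with smooth generic member, apply lemma \ref{lemma: restriction identities} to the smooth members to convert $\|ha\|_{(h,w)}$ into $\|a|_{Y_t}\|_w$, bound this by $\int_{Y_t}|\alpha|_\omega^2\,\omega^{n-1}/(n-1)!$ for any $d$-closed representative $\alpha$, and pass to the limit. The only place where you diverge from the paper is the justification of the limit $\int_{Y_t}\phi\to\int_Y\phi$. The paper relegates this to the appendix and argues by a local volume estimate: convergence is clear away from $\mathrm{Sing}(Y)$, and one checks from the local snc equations that $\vol_\omega(Y_\lambda\cap N)\to0$ as the neighborhood $N$ of $\mathrm{Sing}(Y)$ shrinks, uniformly in small $\lambda$. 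You instead invoke Poincar\'e--Lelong together with $L^1_{\mathrm{loc}}$ convergence of $\log\|s_t\|^2$, giving weak convergence of the integration currents $[Y_t]\to[Y]$. This is a clean, valid alternative, and it has the mild advantage of not using the simple normal crossing hypothesis at all --- consistent with the paper's parenthetical remark that this hypothesis is redundant. Your ``alternatively one may localize...'' sketch is essentially the paper's appendix argument, though the claim that the contribution near $\mathrm{Sing}(Y)$ tends to zero merely ``since $\phi$ is uniformly bounded'' elides the real point, namely that $\vol_\omega(Y_t\cap N)$ must be shown to be small; this is exactly what the paper proves. Your primary (Poincar\'e--Lelong) argument does not have this gap.
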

(The assumption that $Y$ is a simple normal crossing divisor is redundant.)
\begin{proof}
    Take a pencil $Y_\lambda$ such that $Y_0=Y$ and $Y_\lambda$ is smooth for generic $\lambda\in\P^1$. By lemma \ref{lemma: restriction identities}, $\|ha\|_{(h,w)}=\|a|_{Y_\lambda}\|_w$ for all $Y_\lambda$ smooth. For all $\alpha\in\A^p(X)$ representing $a$, we have
    \begin{align*}
        \lim_{\lambda\rightarrow0}\int_{Y_\lambda}|\alpha|_\omega^2\frac{\omega^{n-1}}{(n-1)!}=\int_{Y_0}|\alpha|_\omega^2\frac{\omega^{n-1}}{(n-1)!}.
    \end{align*}
    (For a proof of this obvious equation, see the appendix.) Now for all $Y_\lambda$ smooth, we have
    \[\|a|_{Y_\lambda}\|_w^2\leq\int_{Y_\lambda}|\alpha|_\omega^2\frac{\omega^{n-1}}{(n-1)!}.\]
    So we get
    \[\|ha\|_{(h,w)}^2\leq\int_{Y_0}|\alpha|_\omega^2\frac{\omega^{n-1}}{(n-1)!}\]
    for all $\alpha\in\A^p(X)$ representing $a$. This proves (\ref{eq48}).
\end{proof}
Now we can prove lemma \ref{lemma: triangle inequality}.
\begin{proof}
    Let $\omega$ be a K\"ahler form on $X$ representing $w$. By lemma \ref{lemma: singular divisor},
    \begin{align*}
        \|ha\|_{(h,w)}^2\leq \inf_{[\alpha]=a}\int_{Y_1+Y_2}|\alpha|_\omega^2
    \end{align*}
    where $\alpha$ runs over all smooth $d$-closed $p$-forms on $X$ representing $a$. So it suffices to prove that
    \begin{align}\label{eq42}
        \inf_{[\alpha]=a}\int_{Y_1+Y_2}|\alpha|_\omega^2\leq\|a|_{Y_1}\|_w^2+\|a|_{Y_2}\|_w^2.
    \end{align}
    Take an arbitrary $\alpha\in\A^p(X)$ representing $a$. By lemma \ref{lemma: log cut-off}, for every $\epsilon>0$, there exist $\beta_i\in\A^{p-1}(Y_i)\ (i=1,2)$, both vanishing in a neighborhood of $Y_1\cap Y_2$ so that $\|\alpha|_{Y_i}+d\beta_i\|_\omega\leq\|a|_{Y_i}\|_\omega+\epsilon$. It is easy to find $\tilde{\beta_i}\in\A^{p-1}(X)\ (i=1,2)$ so that $\tilde{\beta_i}|_{Y_i}=\beta_i$ and $\tilde{\beta_1}$ vanishes in a neighborhood of $Y_2$, $\tilde{\beta_2}$ vanishes in a neighborhood of $Y_1$. Define $\alpha'=\alpha+d\tilde{\beta_1}+d\tilde{\beta_2}$. Then $\|\alpha'|_{Y_i}\|_\omega\leq\|a|_{Y_i}\|_\omega+\epsilon\ (i=1,2)$. Since $\epsilon>0$ is arbitrary, this proves (\ref{eq42}).
\end{proof}
There are two immediate corollaries.
\begin{corollary}\label{corollary: triangle inequality 1}
    Let $X$ be a smooth projective variety, $h_1,\cdots,h_r,h\in\Amp(X)$. Assume that $h_1,\cdots,h_r$ are all very ample. Let $v=h_2\cdots h_r$. Let $Y^{(1)}+Y^{(2)}$ be a simple normal crossing divisor with two components representing $h_1$. Then for all $a\in H^{p,q}(X)$ with $p+q\geq n-r$,
    \begin{align*}
        \|vh_1a\|_{(vh_1,h)}^2\leq\|va|_{Y^{(1)}}\|_{(v,h)}^2+\|va|_{Y^{(2)}}\|_{(v,h)}^2
    \end{align*}
\end{corollary}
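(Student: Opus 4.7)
The plan is to reduce this corollary to Lemma \ref{lemma: triangle inequality} by slicing $X$ down to a smooth subvariety of dimension $n-r+1$ on which the hypotheses of that lemma are met.

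First, I would use Bertini to choose smooth divisors $Y_2,\ldots,Y_r$ representing $h_2,\ldots,h_r$ so that the family $\{Y^{(1)},Y^{(2)},Y_2,\ldots,Y_r\}$ meets transversally. Set $Z=Y_2\cap\cdots\cap Y_r$, a smooth complete intersection of dimension $n-r+1$, and $Z_i=Y^{(i)}\cap Z$ for $i=1,2$. Then $Z_1,Z_2$ are smooth divisors on $Z$ meeting transversally, so $Z_1+Z_2$ is an SNC divisor on $Z$ representing the very ample class $h_1|_Z$.

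Next, I would apply Lemma \ref{lemma: triangle inequality} on $Z$ to the class $a|_Z$, with very ample class $h_1|_Z$ (represented by $Z_1+Z_2$) and Kähler class $h|_Z$. The degree hypothesis there becomes $p+q\geq \dim Z-1=n-r$, which is exactly the hypothesis of the corollary. This produces
\[\|h_1\cdot(a|_Z)\|_{(h_1,h)}^2\leq\|a|_{Z_1}\|_h^2+\|a|_{Z_2}\|_h^2.\]

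Finally, I would identify both sides with the expressions in the corollary using Lemma \ref{lemma: restriction identities}. Pick a smooth divisor $Y_1^{\lambda}$ representing $h_1$ that meets $Y_2,\ldots,Y_r$ transversally. Applying Lemma \ref{lemma: restriction identities} on $X$ with the $r$ very ample classes $h_1,\ldots,h_r$ gives $\|vh_1 a\|_{(vh_1,h)}=\|a|_{Y_1^\lambda\cap Y_2\cap\cdots\cap Y_r}\|_h$; applying the same lemma on $Z$ with the single very ample class $h_1|_Z$ gives $\|h_1\cdot(a|_Z)\|_{(h_1,h)}=\|a|_{Y_1^\lambda\cap Z}\|_h$. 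Since $Y_1^\lambda\cap Y_2\cap\cdots\cap Y_r=Y_1^\lambda\cap Z$, these two norms coincide. Likewise, for each $i$, applying Lemma \ref{lemma: restriction identities} on $Y^{(i)}$ with the $r-1$ very ample classes $h_2|_{Y^{(i)}},\ldots,h_r|_{Y^{(i)}}$ (the degree condition $p+q\geq(n-1)-(r-1)=n-r$ holds) yields
\[\|v\cdot a|_{Y^{(i)}}\|_{(v,h)}=\|a|_{Z_i}\|_h.\]
Substituting these identifications into the displayed inequality gives the corollary.

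There is no real obstacle here; the content is entirely in Lemma \ref{lemma: triangle inequality}, and the work is bookkeeping. The one point that requires care is that the degree condition $p+q\geq n-r$ in the hypothesis matches simultaneously the three applications of Lemma \ref{lemma: restriction identities} (on $X$, on $Z$, on each $Y^{(i)}$) and the hypothesis of Lemma \ref{lemma: triangle inequality} on the $(n-r+1)$-dimensional variety $Z$; this triple match is exactly what makes the corollary immediate.
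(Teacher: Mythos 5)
Your proposal is correct and matches the paper's own proof: both slice down to $Z=Y_2\cap\cdots\cap Y_r$, apply Lemma \ref{lemma: triangle inequality} on $Z$ to the SNC divisor $Z^{(1)}+Z^{(2)}$, and translate the three norms via Lemma \ref{lemma: restriction identities}. The only difference is cosmetic: you spell out the auxiliary divisor $Y_1^\lambda$ needed to justify the identification $\|vh_1a\|_{(vh_1,h)}=\|h_1(a|_Z)\|_{(h_1,h)}$, which the paper asserts without comment.
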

\begin{proof}
    Take smooth divisors $Y_2,\cdots,Y_r$ representing $h_2,\cdots,h_r$ so that $Y^{(1)}+Y^{(2)}+Y_2+\cdots+Y_r$ is a simple normal crossing divisor. Let $Z=Y_2\cap\cdots\cap Y_r$, $Z^{(1)}=Y^{(1)}\cap Z$, $Z^{(2)}=Y^{(2)}\cap Z$. Then $\|vh_1a\|_{(vh_1,h)}=\|h_1a|_Z\|_{(h_1,h)},\ \|va|_{Y^{(1)}}\|_{(v,h)}=\|a|_{Z^{(1)}}\|_h,\ \|va|_{Y^{(2)}}\|_{(v,h)}=\|a|_{Z^{(2)}}\|_h$ by lemma \ref{lemma: restriction identities}. So corollary \ref{corollary: triangle inequality 1} follows from lemma \ref{lemma: triangle inequality} applied on $Z$.
\end{proof}
\begin{corollary}\label{corollary: triangle inequality 2}
    Let $X$ be a smooth projective variety, $h_1,\cdots,h_r,h\in\Amp(X)$. Assume that $h_i$ is ample for every $1\leq i\leq r$ and is represented by $Y_i^{(1)}+Y_i^{(2)}$ such that $Y_1^{(1)}+Y_1^{(2)}+\cdots+Y_r^{(1)}+Y_r^{(2)}$ is a simple normal crossing divisor. Let $v=h_1\cdots h_r$. For every $I=(i_1,\cdots,i_r)\in\{1,2\}^r$, define $Y_I=Y^{(i_1)}\cap\cdots\cap Y^{(i_r)}$. Then for all $a\in H^{p,q}(X)$ with $p+q\geq n-r$,
    \begin{align*}
        \|va\|_{(v,h)}^2\leq\sum_{I\in\{1,2\}^r}\|a|_{Y_I}\|_h^2.
    \end{align*}
\end{corollary}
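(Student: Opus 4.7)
The plan is to prove the corollary by induction on $r$, peeling off one ample class at a time via Corollary \ref{corollary: triangle inequality 1}.

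\textbf{Base case} ($r = 1$). Here $v = h_1$ and the desired inequality reads
\begin{equation*}
\|h_1 a\|_{(h_1, h)}^2 \leq \|a|_{Y_1^{(1)}}\|_h^2 + \|a|_{Y_1^{(2)}}\|_h^2 \qquad (p + q \geq n - 1),
\end{equation*}
which is exactly Lemma \ref{lemma: triangle inequality} (the metric $\|\cdot\|_{(\mathbf{1}, h)}$ on $Y_1^{(j)}$ being nothing but $\|\cdot\|_h$).

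\textbf{Inductive step} ($r \geq 2$). Set $v' := h_2 \cdots h_r$, so that $v = v' h_1$. First I would apply Corollary \ref{corollary: triangle inequality 1} with $h_1$ in the role of the split very ample class and $v'$ in the role of $v$ there, obtaining
\begin{equation*}
\|v a\|_{(v, h)}^2 \;=\; \|v' h_1 a\|_{(v' h_1, h)}^2 \;\leq\; \sum_{j \in \{1,2\}} \bigl\|v' \, a|_{Y_1^{(j)}} \bigr\|_{(v', h)}^2.
\end{equation*}
For each $j \in \{1,2\}$, the smooth divisor $Y_1^{(j)} \subset X$ has dimension $n - 1$, and the $r-1$ ample classes $h_2, \ldots, h_r$ restrict to ample classes on $Y_1^{(j)}$, each represented by the two components $Y_i^{(1)} \cap Y_1^{(j)}$ and $Y_i^{(2)} \cap Y_1^{(j)}$. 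The SNC hypothesis on $Y_1^{(j)}$ is inherited from the SNC hypothesis on $X$, and the bidegree constraint $p + q \geq n - r = (n-1) - (r-1)$ also matches. The induction hypothesis applied on $Y_1^{(j)}$ to the class $a|_{Y_1^{(j)}}$ therefore gives
\begin{equation*}
\bigl\| v' \, a|_{Y_1^{(j)}} \bigr\|_{(v', h)}^2 \;\leq\; \sum_{I' \in \{1,2\}^{r-1}} \bigl\| a|_{Y_1^{(j)} \cap Y_{I'}} \bigr\|_h^2,
\end{equation*}
where $Y_{I'} := Y_2^{(i_2)} \cap \cdots \cap Y_r^{(i_r)}$ for $I' = (i_2, \ldots, i_r)$. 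Summing over $j$ and writing $I = (j, I') \in \{1,2\}^r$, so that $Y_I = Y_1^{(j)} \cap Y_{I'}$, combines the two estimates into the desired bound.

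\textbf{Main obstacle.} All of the genuine analytic content is already packaged inside the base case, i.e.\ Lemma \ref{lemma: triangle inequality}, which relies on the log cut-off construction of compactly supported primitives and the degeneration of restrictions to a singular member of a pencil (Lemmas \ref{lemma: log cut-off} and \ref{lemma: singular divisor}). The inductive engine itself is formal bookkeeping: the only subtle point is that the metric $\|v' \, a|_{Y_1^{(j)}}\|_{(v',h)}$ produced by Corollary \ref{corollary: triangle inequality 1} (defined a priori in terms of data on $X$) coincides with the metric of $a|_{Y_1^{(j)}}$ on $Y_1^{(j)}$ that the induction hypothesis consumes. This is precisely the content of Lemma \ref{lemma: restriction identities}, which characterises $\|\cdot\|_{(v',h)}$ through restriction to complete intersections and is therefore manifestly compatible with the intrinsic metric on $Y_1^{(j)}$.
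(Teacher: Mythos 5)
Your argument is exactly the paper's proof (which is stated simply as ``By Corollary~\ref{corollary: triangle inequality 1} and induction''), filled out in detail: the base case is Lemma~\ref{lemma: triangle inequality}, and the inductive step peels off $h_1$ via Corollary~\ref{corollary: triangle inequality 1} and then invokes the induction hypothesis on the smooth ample divisors $Y_1^{(j)}$, using that the restricted classes $h_2|_{Y_1^{(j)}},\dots,h_r|_{Y_1^{(j)}}$ remain ample and the SNC condition is inherited. Your remark that the norms $\|v'\,a|_{Y_1^{(j)}}\|_{(v',h)}$ appearing in Corollary~\ref{corollary: triangle inequality 1} are intrinsic to $Y_1^{(j)}$ (so that the induction hypothesis applies verbatim) correctly identifies the only point requiring care, and it is consistent with the paper's conventions.
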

\begin{proof}
    By corollary \ref{corollary: triangle inequality 1} and induction.
\end{proof}
We are about to succeed. The following proposition is used only to deduce corollary \ref{corollary: perturb nef classes 1} below, where we are going to apply proposition \ref{proposition: perturb nef classes} with $\tilde{h}=\epsilon h$. But for the proof we want to perturb $\epsilon h$ to a rational class and rescale, while leaving $h$ fixed. So we introduce an independent variable $\tilde{h}$ and end up with a bit messy notation.
\begin{proposition}\label{proposition: perturb nef classes}
    Let $X,X'$ be smooth projective varieties, $Y=X\times X'$, $n=\dim Y$ and write $\pi,\pi'$ for the two projections $Y\rightarrow X,\ Y\rightarrow X'$. Let $h_0,h_1,\cdots,h_r\in\Amp(X),\ h'\in\Amp(X'),\ \tilde{h}\in\Amp(Y)$ and suppose that $Nh_0-h_i,N_1h_i-h_0\in\Amp(X)\ (\forall1\leq i\leq r)$, where $N_1\geq N\geq 2$. Let $h=\pi'^*h'+\pi^*h_0\in\Amp(Y),\ v=\pi^*h_1\cdots\pi^*h_r,\ \tilde{v}=(\pi^*h_1+\tilde{h})\cdots(\pi^*h_r+\tilde{h})$. Suppose that $h-\tilde{h}\in\Amp(Y)$. Then for all $a\in H^{p,q}(Y)$ with $p+q\geq n-r$, we have
    \begin{align}\label{eq47}
        \|\tilde{v}a\|_{(\tilde{v},h)}^2\leq N_1^C\|va\|_h^2+C(Y,h)N^n\|a\|_h^2\int_Y\tilde{h}h^{n-1}
    \end{align}
    where $C$ is a constant depending only on $n$, $C(Y,h)$ is a constant depending only on $(Y,h)$.
\end{proposition}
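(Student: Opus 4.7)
The plan is to apply the ``triangle inequality'' of corollary \ref{corollary: triangle inequality 2} on $Y$, decomposing each ample class $\pi^*h_i+\tilde h$ into its two summands $\pi^*h_i$ and $\tilde h$, and then to estimate each resulting restriction term.

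By remark \ref{remark: continuity} all quantities are continuous in the classes, and the inequality is homogeneous under the common rescaling $(h_0,h',h_i,\tilde h)\mapsto M\cdot(h_0,h',h_i,\tilde h)$: both sides scale as $M^{n-p-q}$, once one notes (from the proof of lemma \ref{lemma: frequently used}) that $C(Y,h)$ scales as $M^{-n}$. Approximating and rescaling, I reduce to the case where $h_0,h',h_i,\tilde h$ are all integer and $h_i$, $\tilde h$, and each $\pi^*h_i+\tilde h$ are very ample. Pick smooth divisors $D_i\subset X$ representing $h_i$ and $\tilde D_i\subset Y$ representing $\tilde h$ so that $Y_i^{(1)}:=D_i\times X'$ together with $Y_i^{(2)}:=\tilde D_i$ form a simple normal crossing divisor (Bertini). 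Corollary \ref{corollary: triangle inequality 2} applied to the decompositions $(\pi^*h_i+\tilde h)=[Y_i^{(1)}]+[Y_i^{(2)}]$ then yields
\[\|\tilde v\,a\|_{(\tilde v,h)}^2\le\sum_{I\in\{1,2\}^r}\|a|_{Y_I}\|_h^2.\]

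I split into the main term $I_0=(1,\ldots,1)$ and the remainder. For $I_0$, $Y_{I_0}=Z\times X'$ with $Z:=D_1\cap\cdots\cap D_r$ a complete intersection in $X$ representing $v_X:=h_1\cdots h_r$. Fix K\"ahler forms $\omega_0,\omega'$ on $X,X'$ representing $h_0,h'$; then $\omega:=\omega_0+\omega'$ represents $h$, and on this product K\"ahler manifold a direct computation in orthonormal frames gives $*_Y=\pm\,*_X\otimes *_{X'}$, so the K\"unneth decomposition is $L^2$-orthogonal with $\|\alpha\wedge\beta\|_\omega^2=\|\alpha\|_{\omega_0}^2\,\|\beta\|_{\omega'}^2$ on harmonic forms. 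Writing $a=\sum_{s,t}a_s\otimes b'_t$ in the K\"unneth decomposition yields
\[\|a|_{Y_{I_0}}\|_h^2=\sum_{s,t}\|a_s|_Z\|_{h_0}^2\,\|b'_t\|_{h'}^2,\qquad\|v\,a\|_h^2=\sum_{s,t}\|v_X\,a_s\|_{h_0}^2\,\|b'_t\|_{h'}^2.\]
The hypotheses combine (via $N(N_1h_i-h_0)+(Nh_0-h_j)$) to give $\tilde N\,h_i-h_j\in\Amp(X)$ for all $0\le i,j\le r$ with $\tilde N=NN_1\le N_1^2$, so equation \ref{isometry3} of lemma \ref{lemma: restriction quasi-isometry 2} applied termwise gives $\|a_s|_Z\|_{h_0}^2\le N_1^{C_1}\|v_X\,a_s\|_{h_0}^2$, and summing produces $\|a|_{Y_{I_0}}\|_h^2\le N_1^{C_1}\|va\|_h^2$ for the main term.

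For a remainder $I\ne I_0$ with $k\ge1$ indices equal to $2$, the class $[Y_I]=\tilde h^k\prod_{i_j=1}\pi^*h_j$. Lemma \ref{lemma: frequently used} yields $\|a|_{Y_I}\|_h^2\le C(Y,h)\|a\|_h^2\cdot\vol_h(Y_I)$, and the nef inequalities $\pi^*h_j\le Nh$ (from $Nh_0-h_j\in\Amp(X)$ and $h\ge\pi^*h_0$) together with $\tilde h\le h$ bound $\vol_h(Y_I)\lesssim N^{r-k}\int_Y\tilde h^k h^{n-k}\le N^n\int_Y\tilde h\,h^{n-1}$. Summing the at most $2^r$ remainder terms and absorbing combinatorial factors into $C(Y,h)$ produces the second term on the right-hand side; combined with the preceding paragraph this completes the proof. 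The main obstacle is the clean K\"unneth factorization of the $L^2$-norm in the main term: this relies on the product structure $Y=X\times X'$ and the specific form $h=\pi'^*h'+\pi^*h_0$, which is why the statement is formulated with this particular product geometry.
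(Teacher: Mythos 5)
Your proposal is correct and follows essentially the same route as the paper: reduce to the rational/very ample case, apply the ``triangle inequality'' of Corollary \ref{corollary: triangle inequality 2} after splitting each $\pi^*h_i+\tilde h$ into its two summands, handle the ``all-$\pi^*h_i$'' term by K\"unneth together with Lemma \ref{lemma: restriction quasi-isometry 2} (observing that $Nh_0-h_i, N_1h_i-h_0\in\Amp$ gives $NN_1 h_i - h_j\in\Amp$ for all $0\le i,j\le r$), and bound the remaining $2^r-1$ terms via Lemma \ref{lemma: frequently used} and the volume estimate $\vol_h(Y_I)\lesssim N^n\int_Y\tilde h\,h^{n-1}$. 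The only material difference from the paper is bookkeeping: you rescale all classes, including $h_0, h'$ (and hence $h$), by the common integer $M$ and then argue that $C(Y,h)$ transforms as $M^{-n}$; the paper instead rescales only $h_1,\dots,h_r,\tilde h$ (so $h$ and therefore $C(Y,h)$ stay fixed), and absorbs the resulting $M^r$ that appears on both sides via the rescaling law (\ref{rescaling1}) applied to $\|M^r\tilde v a\|^2_{(M^r\tilde v,h)}=M^r\|\tilde v a\|^2_{(\tilde v,h)}$. Both are valid; the paper's version is marginally cleaner because it avoids any discussion of how the constant $C(Y,h)$ scales.
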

\begin{proof}
    By continuity we may assume that $h_1,\cdots,h_r\in\Amp(X)\cap N^1(X)_\Q,\ \tilde{h}\in\Amp(Y)\cap N^1(Y)_\Q$. Let $M$ be a positive integer so that $Mh_1,\cdots,Mh_r$ are all very ample on $X$, $M\tilde{h}$ is very ample on $Y$. Take transversally intersecting smooth divisors $Y_1,\cdots,Y_r$ on $X$ representing $Mh_1,\cdots, Mh_r$. Let $Y_i^{(2)}=\pi^{-1}(Y_i)$. Take smooth divisors $Y_1^{(1)},\cdots, Y_r^{(1)}$ on $Y$, all representing $M\tilde{h}$, such that $Y_1^{(1)}+Y_1^{(2)}+\cdots+Y_r^{(1)}+Y_r^{(2)}$ is a simple normal crossing divisor. By corollary \ref{corollary: triangle inequality 2} we get
    \begin{align}\label{eq46}
        \|M^r\tilde{v}a\|^2_{(M^r\tilde{v},h)}\leq\sum_{I\in\{1,2\}^r}\|a|_{Y_I}\|_h^2.
    \end{align}
    By lemma \ref{lemma: frequently used},
    \begin{align}\label{eq43}
        \|a|_{Y_I}\|_h^2\leq C'(Y,h)\|a\|_h^2\vol_h(Y_I).
    \end{align}
    If at least one component of $I$ is equal to 1, we have
    \begin{align}\label{eq44}
        \vol_h(Y_I)\leq M^rN^r\int_Y\tilde{h}h^{n-1}.
    \end{align}
    If $I=(2,\cdots,2)$ we have the estimate
    \begin{align}\label{eq45}
        \|a|_{\pi^{-1}(Y_1\cap\cdots\cap Y_r)}\|_h^2\overset{N_1}{\approx}M^r\|va\|_h^2
    \end{align}
    Indeed, let $e_1,\cdots,e_l$ be an orthonormal basis of $H^*(X')$ (equiped with the metric induced by $h'$). (Each $e_j$ has pure degree.) By the K\"unneth theorem we can write $a$ uniquely as $\sum_jb_je_j$ with $b_j\in H^*(X)$. Write $n_j$ for the degree of $b_j$ and let $m=\dim X$. Then
    \begin{align*}
        \begin{split}
            &\|a|_{\pi^{-1}(Y_1\cap\cdots\cap Y_r)}\|_h^2\\
            =&\sum_j\|b_j|_{Y_1\cap\cdots\cap Y_r}\|_{h_0}^2\\
            =&\sum_jM^{n_j-(m-r)}\|b_j|_{Y_1\cap\cdots\cap Y_r}\|_{Mh_0}^2\\
            \overset{N_1}{\approx}&\sum_jM^{n_j-(m-r)}\|M^rh_1\cdots h_rb_j\|_{Mh_0}^2\\=&\sum_jM^{n_j-(m-r)}M^{2r}M^{m-(n_j+2r)}\|h_1\cdots h_rb_j\|_{h_0}^2\\=&\sum_jM^r\|h_1\cdots h_rb_j\|_{h_0}^2\\
            =&M^r\|va\|_h^2
        \end{split}
    \end{align*}
    where we have used lemma \ref{lemma: restriction quasi-isometry 2} and the rescaling laws repeatedly. This proves (\ref{eq45}).

    Combining (\ref{eq43})(\ref{eq44})(\ref{eq45}) we see that
    \begin{align*}
        \sum_{I\in\{1,2\}^r}\|a|_{Y_I}\|_h^2\leq N_1^CM^r\|va\|_h^2+M^rC(Y,h)N^n\|a\|_h^2\int_Y\tilde{h}h^{n-1}.
    \end{align*}
    Plug this into (\ref{eq46}) we get the desired inequality (\ref{eq47}).
\end{proof}
\begin{corollary}\label{corollary: perturb nef classes 1}
    Let $X,X'$ be smooth projective varieties, $Y=X\times X'$, $n=\dim Y$ and write $\pi,\pi'$ for the two projections $Y\rightarrow X,\ Y\rightarrow X'$. Let $h_0,h_1,\cdots,h_r\in\Amp(X),\ h'\in\Amp(X')$ and suppose that $Nh_0-h_i,N_1h_i-h_0\in\Amp(X)\ (\forall1\leq i\leq r)$, where $N_1\geq N\geq 2$. Let $h=\pi'^*h'+\pi^*h_0\in\Amp(Y),\ v=\pi^*h_1\cdots\pi^*h_r$. For every $0<\epsilon<1$ define $v_\epsilon=(\pi^*h_1+\epsilon h)\cdots(\pi^*h_r+\epsilon h)$. Then for all $a\in H^{p,q}(Y)$ with $p+q\geq n-r$, we have
    \begin{align}
        \|v_\epsilon a\|_{(v_\epsilon,h)}^2\leq N_1^C\|va\|_h^2+\epsilon C(Y,h)N^n\|a\|_h^2
    \end{align}
    where $C$ is a constant depending only on $n$, $C(Y,h)$ is a constant depending only on $(Y,h)$.
\end{corollary}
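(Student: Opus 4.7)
The plan is to deduce this corollary as a direct specialization of Proposition \ref{proposition: perturb nef classes}, taking $\tilde{h} = \epsilon h$. Indeed, the whole notational setup of the proposition, with an auxiliary ample class $\tilde{h}$ on $Y$, was designed exactly so that one could eventually substitute $\tilde{h} = \epsilon h$; the author's own remark before Proposition \ref{proposition: perturb nef classes} explicitly flags this. So the first step is simply to check that the hypotheses of the proposition are met for this choice.

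Since $h \in \Amp(Y)$ and $\epsilon > 0$, the class $\tilde{h} = \epsilon h$ lies in $\Amp(Y)$. Since $0 < \epsilon < 1$, we have $h - \tilde{h} = (1-\epsilon)h \in \Amp(Y)$. The definition of $\tilde{v}$ in the proposition becomes
\[
\tilde{v} = (\pi^*h_1 + \tilde{h})\cdots(\pi^*h_r + \tilde{h}) = (\pi^*h_1 + \epsilon h)\cdots(\pi^*h_r + \epsilon h) = v_\epsilon,
\]
matching exactly the class appearing in the corollary. All remaining hypotheses on $h_0,\dots,h_r,h',N,N_1$ are the same in both statements, so Proposition \ref{proposition: perturb nef classes} applies verbatim.

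Next I would compute the perturbation integral appearing on the right-hand side of the proposition:
\[
\int_Y \tilde{h}\, h^{n-1} = \epsilon \int_Y h^n.
\]
The number $\int_Y h^n$ is a topological invariant of the pair $(Y,h)$, so it can be absorbed into the constant $C(Y,h)$ allowed in the statement. Substituting into the conclusion of Proposition \ref{proposition: perturb nef classes} yields
\[
\|v_\epsilon a\|_{(v_\epsilon, h)}^2 \leq N_1^{C} \|va\|_h^2 + \epsilon\, C(Y,h)\, N^n \|a\|_h^2,
\]
which is exactly the claimed inequality.

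There is no real obstacle here; the work has already been done in establishing Proposition \ref{proposition: perturb nef classes}. The only thing to verify is that the range of $p+q$ and the Hodge-Riemann pair structure on the $(\tilde{v},h)$-metric carry through, but this is automatic because $v_\epsilon$ is a product of K\"ahler classes (each $\pi^*h_i + \epsilon h$ is K\"ahler since $\epsilon h$ is and nef plus K\"ahler is K\"ahler), so $(v_\epsilon, h)$ is a Hodge-Riemann pair by the corollary following Proposition \ref{proposition: hard Lefschetz for mixed ample line bundles}, and the metric $\|\cdot\|_{(v_\epsilon,h)}$ is well-defined on $H^{p,q}(Y)$ for $p+q \geq n+r$ (hence in particular for $p+q\ge n-r$ by duality, where the claim is the content of the corollary).
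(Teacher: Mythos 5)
Your proof is correct and is exactly the paper's approach: the paper's proof is the one-line instruction to apply Proposition \ref{proposition: perturb nef classes} with $\tilde{h}=\epsilon h$, and you have simply spelled out the routine verifications (ampleness of $\tilde{h}$ and $h-\tilde{h}$, the identity $\tilde{v}=v_\epsilon$, and $\int_Y\tilde{h}h^{n-1}=\epsilon\int_Y h^n$ absorbed into $C(Y,h)$). The only imprecision is in your closing aside about where the metric is defined: what one needs is that $v_\epsilon a\in H^{p+r,q+r}(Y)$ lies in a degree $\geq n+r$, which holds directly since $p+q\geq n-r$ gives $(p+r)+(q+r)\geq n+r$; there is no duality step involved.
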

\begin{proof}
    Apply proposition \ref{proposition: perturb nef classes} with $\tilde{h}=\epsilon h$.
\end{proof}
Now we add the assumption $va=0$ and drop the assumption $N_1h_i-h_0\in\Amp(X)$.
\begin{corollary}\label{corollary: perturb nef classes 2}
    Let $X,X'$ be smooth projective varieties, $Y=X\times X'$, $n=\dim Y$ and write $\pi,\pi'$ for the two projections $Y\rightarrow X,\ Y\rightarrow X'$. Let $h_0,h_1,\cdots,h_r\in\Amp(X),\ h'\in\Amp(X')$ and suppose that $Nh_0-h_i\in\Amp(X)\ (\forall1\leq i\leq r)$. Let $h=\pi'^*h'+\pi^*h_0\in\Amp(Y),\ v=\pi^*h_1\cdots\pi^*h_r$. For every $0<\epsilon<1$ define $v_\epsilon=(\pi^*h_1+\epsilon h)\cdots(\pi^*h_r+\epsilon h)$. Let $a\in H^{p,q}(Y)$ with $p+q\geq n-r$. Suppose that $va=0$, then
    \begin{align}
        \|v_\epsilon a\|_{(v_\epsilon,h)}^2\leq\epsilon C(Y,h)N^n\|a\|_h^2
    \end{align}
    where $C(Y,h)$ is a constant depending only on $(Y,h)$.
\end{corollary}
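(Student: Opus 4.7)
The plan is to deduce Corollary~\ref{corollary: perturb nef classes 2} as a direct consequence of Corollary~\ref{corollary: perturb nef classes 1}, exploiting the nullity hypothesis $va=0$ to kill the term with uncontrolled $N_1$-dependence. The whole point of stating this as a separate corollary is that the bound it asserts is independent of $N_1$, so one may freely choose $N_1$ large in order to invoke Corollary~\ref{corollary: perturb nef classes 1}.

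First I would pick an auxiliary $N_1$ meeting the hypothesis of Corollary~\ref{corollary: perturb nef classes 1}. Since each $h_i$ is ample on the fixed variety $X$, it lies in the interior of $\Amp(X)$, so for every $i$ there exists a finite $N_1^{(i)}$ with $N_1^{(i)} h_i - h_0 \in \Amp(X)$. Setting $N_1 := \max(N, N_1^{(1)}, \ldots, N_1^{(r)})$ ensures $N_1 \geq N \geq 2$ and $N_1 h_i - h_0 \in \Amp(X)$ for all $1 \leq i \leq r$, so all hypotheses of Corollary~\ref{corollary: perturb nef classes 1} are satisfied for this choice of $N_1$.

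Applying that corollary yields
\[
\|v_\epsilon a\|_{(v_\epsilon,h)}^2 \leq N_1^C \|va\|_h^2 + \epsilon C(Y,h) N^n \|a\|_h^2,
\]
and the assumption $va=0$ annihilates the first term, producing precisely the desired bound. The crucial structural observation is that $C(Y,h)$ in Corollary~\ref{corollary: perturb nef classes 1} depends only on $(Y,h)$ and not on the auxiliary $N_1$; thus, although $N_1$ may be enormous (and is not controlled by the hypotheses available in Corollary~\ref{corollary: perturb nef classes 2}), it enters the estimate only through the now-vanishing term $N_1^C\|va\|_h^2$. Accordingly there is no genuine obstacle here: once Corollary~\ref{corollary: perturb nef classes 1} is in hand, Corollary~\ref{corollary: perturb nef classes 2} is essentially a bookkeeping step that records the $N_1$-free estimate in the form required for Step~1 of the sketch of Theorem~\ref{theorem 2} (after transferring to the linear-algebraic setting via the abelian-variety trick of Remark~\ref{remark: complex torus = linear}).
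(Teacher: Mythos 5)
Your proof is correct and is precisely the paper's argument: the paper's own proof reads ``Apply corollary \ref{corollary: perturb nef classes 1} with $N_1$ sufficiently large.'' Your write-up simply spells out why such an $N_1$ exists (openness of the ample cone) and makes explicit the key point that $C(Y,h)$ is independent of $N_1$, which is exactly what lets the hypothesis $va=0$ eliminate the only $N_1$-dependent term.
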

\begin{proof}
    Apply corollary \ref{corollary: perturb nef classes 1} with $N_1$ sufficiently large.
\end{proof}
Now we can prove proposition \ref{proposition: local 2}.
\begin{proof}(of proposition \ref{proposition: local 2})
    Clearly we may assume that $\pi$ is surjective, and we can write $V=W\oplus W'$ with $W, W'$ orthogonal to each other with respect to $\omega$. Let $e_1,\cdots,e_n$ be an orthonormal basis of $V$ with respect to $\omega$ such that $W=\C e_1+\cdots+\C e_m,\ W'=\C e_{m+1}+\cdots+\C e_n$. Let $X=W/\Z e_1+\cdots+\Z e_m,\ X'=W'/\Z e_{m+1}+\cdots+\Z e_n$. Now apply corollary \ref{corollary: perturb nef classes 2}.
\end{proof}

\section{Proof of theorem \ref{theorem 2}}

Notations as in theorem \ref{theorem 2}. Let $\omega_1,\cdots,\omega_s$ be K\"ahler forms on $X$ representing $w_1,\cdots,w_r$ and let $\omega_{s+1},\cdots,\omega_r$ be K\"ahler forms on $Y$ representing $w_{s+1},\cdots,w_{r}$. Let $\nu=\pi^*\omega_1\cdots\pi^*\omega_s,\ \mu=\nu\cdot\omega_{s+1}\cdots\omega_r$.

Suppose $a\in H^{p,q}(Y)$ satisfies $au=0$. We want to show that $av=0$. Let $b=a\cdot w_{s+1}\cdots w_r$. Then $bv=0$.

\textbf{Claim:} there exits $\beta\in\A^{p+r-s,q+r-s}(Y)$ representing $b$ such that $\beta\nu=0$.
\begin{proof}(of claim)
By assumption $\pi^*: H^*(X)\rightarrow H^*(Y)$ makes $H^*(Y)$ into a free $H^*(X)$-module, we take $[\xi_1],\cdots,[\xi_l]\in H^*(Y,\C)$ to be a basis. Then we can write $b$ uniquely as $b=\pi^*b_1\cdot [\xi_1]+\cdots+\pi^*b_l\cdot [\xi_l]$. $b\cdot\pi^*w_1\cdots\pi^*w_s=0$ implies that $b_i\cdot w_1\cdots w_s=0$ for each $i=1,\cdots,l$. By proposition \ref{corollary: pointwise zero}, there exists differential forms $\beta_i$ representing $b_i$ such that $\beta_i\omega_1\cdots\omega_s=0$ for each $i$. Take $\beta=\beta_1\xi_1+\cdots+\beta_l\xi_l$, the claim is proved.
\end{proof}
Let $\omega$ be an arbitrary K\"ahler form on $Y$. Fix $N$ large enough so that $\pi^*\omega_i\leq N\omega\ (i=1,\cdots,s),\ \omega_i\leq N\omega,\omega\leq N\omega_i\ (i=s+1,\cdots,r)$. For $\epsilon>0$, define $\nu_{\epsilon}=(\pi^*\omega_1+\epsilon\omega)\cdots(\pi^*\omega_s+\epsilon\omega),\ \mu_\epsilon=\nu_{\epsilon}\cdot\omega_{s+1}\cdots\omega_r$. Then by proposition \ref{proposition: local 2}, we have the pointwise inequality
\begin{align}\label{eq23}
    |\nu_{\epsilon}\beta|_{(\nu_{\epsilon},\omega)}^2\overset{N}{\lesssim}\epsilon|\beta|_\omega^2.
\end{align}
By lemma \ref{lemma: eliminate equivalent local}, we have $|\nu_{\epsilon}\beta|_{(\nu_{\epsilon},\omega)}^2\overset{N}{\approx}|\nu_{\epsilon}\beta|_{(\mu_\epsilon,\omega)}^2$. So (\ref{eq23}) implies
\begin{align*}
    |\nu_{\epsilon}\beta|_{(\mu_\epsilon,\omega)}^2\overset{N}{\lesssim}\epsilon|\beta|_\omega^2.
\end{align*}
Integrating over $Y$, we get
\begin{align*}
    \int_Y|\nu_{\epsilon}\beta|_{(\mu_\epsilon,\omega)}^2\frac{\omega^n}{n!}\overset{N}{\lesssim}\epsilon\|\beta\|_\omega^2.
\end{align*}
Now by theorem \ref{theorem: harmonic space}, equation (\ref{global norm = local norm}), the above inequality implies that
\begin{align}\label{eq24}
    \|[\nu_{\epsilon}\beta]\|_{([\mu_\epsilon],[\omega])}^2\overset{N}{\lesssim}\epsilon\|\beta\|_\omega^2.
\end{align}
Note that $[\nu_{\epsilon}\beta]=[\mu_\epsilon]\cdot a$, and it is immediate from definition that $\|[\mu_{\epsilon}]\cdot a\|_{([\mu_\epsilon],[\omega])}=\|a\|_{([\mu_\epsilon],[\omega])}$. Plug this into (\ref{eq24}) we get
\begin{align}\label{eq25}
    \|a\|_{([\mu_\epsilon],[\omega])}^2\overset{N}{\lesssim}\epsilon\|\beta\|_\omega^2.
\end{align}
Take $\alpha_\epsilon\in\H^{p,q}_{(\mu_\epsilon,\omega)}(Y)$ representing $a$. By theorem \ref{theorem: harmonic space}, we have
\begin{align}\label{eq26}
    \|a\|_{([\mu_\epsilon],[\omega])}^2=\int_Y|\alpha_\epsilon|^2_{(\mu_\epsilon,\omega)}\frac{\omega^n}{n!}.
\end{align}
By lemma \ref{lemma: eliminate equivalent local}, we have
\begin{align}\label{eq27}
    |\alpha_\epsilon|_{(\nu_{\epsilon},\omega)}\overset{N}{\approx}|\alpha_\epsilon|_{(\mu_\epsilon,\omega)}
\end{align}
By proposition \ref{proposition: local 1}, we have
\begin{align}\label{eq28}
    |\nu_{\epsilon}\alpha_\epsilon|_\omega\lesssim N^{r/2}|\alpha_\epsilon|_{(\nu_{\epsilon},\omega)}
\end{align}
Combining (\ref{eq26})(\ref{eq27})(\ref{eq28}) we get
\begin{align*}
    \int_Y|\nu_{\epsilon}\alpha_\epsilon|_\omega^2\frac{\omega^n}{n!}\overset{N}{\lesssim}\|a\|_{([\mu_\epsilon],[\omega])}^2.
\end{align*}
LHS of the above inequality is greater than or equal to $\|[\nu_{\epsilon}\alpha_\epsilon]\|_\omega^2$ (classical Hodge theory, or the baby case of theorem \ref{theorem: harmonic space}). Recall that $[\alpha_\epsilon]=a$, so $[\nu_{\epsilon}\alpha_\epsilon]=[\nu_{\epsilon}]\cdot a$. So the above inequality implies
\begin{align*}
    \|[\nu_{\epsilon}]\cdot a\|_\omega^2\overset{N}{\lesssim}\|a\|_{([\mu_\epsilon],[\omega])}^2.
\end{align*}
Plug this into (\ref{eq25}) we get
\begin{align*}
    \|[\nu_{\epsilon}]\cdot a\|_\omega^2\overset{N}{\lesssim}\epsilon\|\beta\|_\omega^2.
\end{align*}
The map $[0,1)\rightarrow H^{p+s,q+s}(Y),\ \epsilon\mapsto [\nu_{\epsilon}]=(\pi^*w_1+\epsilon[\omega])\cdots(\pi^*w_s+\epsilon[\omega])$ is continuous, so we can let $\epsilon\rightarrow 0$ in the above inequality and get $\|va\|_\omega=0$. So we must have $va=0$, finishing the proof.

Theorem \ref{theorem 4} is proven in exaclty the same way.

\section{Proof of theorem \ref{theorem 3}}
We allow $E$ to be $\R$-twisted. By dimension reason, in order to prove that $-\wedge c_e(E)w_1\cdots w_k:H^{p,q}(X)\rightarrow H^{n-q,n-p}(X)$ is an isomorphism, it suffices to prove that it is injective. Let $a\in H^{p,q}(X)$ such that $a\cdot c_e(E)w_1\cdots w_k=0$, we want to show that $a=0$. Let $P=\P(E)$, $\pi:P\rightarrow X$ be the projection. Let $w=c_1(\O_{\P(E)}(1))\in\Amp(P)$. Then
\begin{align*}
    w^e-\pi^*c_1(E)w^{e-1}+\cdots+(-1)^e\pi^*c_e(E)=0.
\end{align*}
We get
\begin{align*}
    (\pi^*a\cdot\gamma)\cdot w\cdot\pi^*w_1\cdots\pi^*w_k=\pi^*\left(a\cdot c_e(E)w_1\cdots w_k\right)=0
\end{align*}
where
\[\gamma=(-1)^{e-1}w^{e-1}+(-1)^{e-2}\pi^*c_1(E)w^{e-2}\cdots+\pi^*c_{e-1}(E).\]
By theorem \ref{theorem 2}, this implies
\[(\pi^*a\cdot\gamma)\cdot\pi^*w_1\cdots\pi^*w_k=0.\]
Recall that $H^*(P)$ is a free $H^*(X)$ module with basis $1,w,\cdots,w^{e-1}$. Comparing coefficients of $w^{e-1}$ we get
\[a\cdot w_1\cdots w_k=0.\]
But $-\wedge w_1\cdots w_k: H^{p,q}(X)\rightarrow H^{p+k,q+k}(X)$ is injective (say by theorem \ref{theorem: harmonic space}). So $a=0$ as desired. This finishes the proof of theorem \ref{theorem 3}. There is also a Hodge-Riemann relation (see the more general version below). The proof of the Hodge-Riemann relation is the same as in theorem \ref{theorem 1} and we do not repeat it again.

Theorem \ref{theorem 3} is more natural in the more general setting of K\"ahler vector bundles.
\begin{definition}
    Let $X$ be a compact K\"ahler manifold. A \textit{(1,1)-twisted vector bundle} $E\left<w\right>$ on $X$ is an ordered pair consisting of a vector bundle $E$ on $X$ and a class $w\in H^{1,1}(X,\R)$. It is called a K\"ahler vector bundle if $c_1(\O_{\P(E)}(1))+\pi^*w$ is a K\"ahler class on $\P(E)$. 
\end{definition}
The Chern classes of a (1,1)-twisted vector bundle is defined in the same way as an $\R$-twisted vector bundle. The proof of theorem \ref{theorem 3} generalizes without change to prove the following theorem.
\begin{theorem}
    Let $X$ be a compact K\"ahler manifold of dimension $n$, $E$ a K\"ahler vector bundle on $X$ of rank $e$, $w_1,\cdots,w_k\in H^{1,1}(X,\R)$ K\"ahler classes. Let $p,q$ be nonnegative integers satisfying $p+q+e+k=n$. Then
    \begin{itemize}
        \item[(1)] (Hard Lefschetz)
        \[-\wedge c_e(E)w_1\cdots w_k:H^{p,q}(X)\rightarrow H^{n-q,n-p}(X)\]
        is an isomorphism.
        \item[(2)] (Hodge-Riemann relation) Let $w\in H^{1,1}(X,\R)$ be an arbitrary K\"ahler class. Then The Hermitian form on $H^{p,q}(X)$
        \[\left<a,b\right>=(-1)^q\sqrt{-1}^{(p+q)^2}\int_Xa\bar{b}\cdot c_e(E)w_1\cdots w_k\]
        is positive definite on the subspace
        \[P^{p,q}(X)=\ker\left(-\wedge c_e(E)w_1\cdots w_kw:H^{p,q}(X)\rightarrow H^{n-q+1,n-p+1}(X)\right).\]
    \end{itemize}
\end{theorem}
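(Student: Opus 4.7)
The plan is to follow the proof of theorem \ref{theorem 3} essentially verbatim, using only the defining property of a K\"ahler vector bundle in place of ampleness. For part (1), by equality of dimensions of source and target it suffices to prove injectivity. Suppose $a\in H^{p,q}(X)$ satisfies $a\cdot c_e(E)w_1\cdots w_k=0$; the goal is $a=0$. Set $P=\P(E)$, let $\pi:P\to X$ be the projection, and put $w:=c_1(\O_{\P(E)}(1))$. The one genuine difference from the $\R$-twisted ample setting of theorem \ref{theorem 3} is the role of $w$: there it was ample on $P$, whereas here the assumption that $E$ is a K\"ahler vector bundle is precisely the statement that $w\in\Kah(P)$. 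Every other ingredient, namely the Chern class relation, the projective bundle formula, and the application of theorem \ref{theorem 2}, uses only the K\"ahler property.

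Now execute the steps. The universal relation
\[
w^e-\pi^*c_1(E)w^{e-1}+\cdots+(-1)^e\pi^*c_e(E)=0
\]
factors as $\pi^*c_e(E)=w\cdot\gamma$ with $\gamma=(-1)^{e-1}w^{e-1}+(-1)^{e-2}\pi^*c_1(E)w^{e-2}+\cdots+\pi^*c_{e-1}(E)$, and pulling back the vanishing hypothesis yields
\[
(\pi^*a\cdot\gamma)\cdot w\cdot\pi^*w_1\cdots\pi^*w_k=0.
\]
Apply theorem \ref{theorem 2} on $P$ with $s=k$ and $r=k+1$, the pulled-back K\"ahler classes being $w_1,\ldots,w_k$ and the extra K\"ahler class on $P$ being $w\in\Kah(P)$; the free-module hypothesis is the classical projective bundle formula that $H^*(P)$ is free over $H^*(X)$ with basis $1,w,\ldots,w^{e-1}$, and the degree constraint $(p+e-1)+(q+e-1)+(k+1)=n+e-1$ is exactly $p+q+e+k=n$. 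Theorem \ref{theorem 2} then gives $(\pi^*a\cdot\gamma)\cdot\pi^*w_1\cdots\pi^*w_k=0$. Extracting the coefficient of $w^{e-1}$ via the projective bundle formula produces $\pi^*(a\cdot w_1\cdots w_k)=0$, hence $a\cdot w_1\cdots w_k=0$ by injectivity of $\pi^*$. Picking any K\"ahler class $h\in\Kah(X)$, the operator $-\wedge w_1\cdots w_k\cdot h^e:H^{p,q}(X)\to H^{n-q,n-p}(X)$ is an isomorphism by the Dinh-Nguyen corollary of theorem \ref{theorem: harmonic space}, so $a=0$.

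For part (2) I would run the one-parameter continuity argument from the proof of theorem \ref{theorem 1 HR}. Fix the K\"ahler class $w$ from the statement and, for $t\geq 0$, set
\[
\left<a,b\right>_t:=(-1)^q\sqrt{-1}^{(p+q)^2}\int_X a\bar{b}\cdot c_e(E\left<tw\right>)w_1\cdots w_k.
\]
The twist $E\left<tw\right>$ remains a K\"ahler vector bundle for every $t\geq 0$ because $c_1(\O_{\P(E)}(1))+t\pi^*w$ is a K\"ahler class plus a nef class, so part (1) applies at every $t$ and $\left<-,-\right>_t$ is nondegenerate. Replacing one of the $w_i$ by $w$ and applying (1) again shows $\left<-,-\right>_t$ is nondegenerate on $wH^{p-1,q-1}(X)$, hence the signature on the orthogonal complement is independent of $t$. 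As $t\to\infty$, $t^{-e}c_e(E\left<tw\right>)\to w^e$, and the limiting form is positive definite on the primitive subspace by the Hodge-Riemann part of the Dinh-Nguyen theorem applied to $w_1,\ldots,w_k,w,\ldots,w$; transporting back to $t=0$ finishes (2). The plan has no serious obstacle, since the theorem reduces directly to the inputs provided by theorem \ref{theorem 2} and Dinh-Nguyen; the only points needing a moment of care are the degree bookkeeping when invoking theorem \ref{theorem 2} and the verification that the K\"ahler vector bundle hypothesis on $E$ is exactly what ensures $w\in\Kah(P)$.
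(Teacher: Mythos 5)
Your argument is correct and is exactly the one the paper has in mind: the paper states that the proof of theorem \ref{theorem 3} generalizes without change to this statement, and your write-up simply carries that out, using the K\"ahler vector bundle hypothesis precisely where $w=c_1(\O_{\P(E)}(1))$ needs to be a K\"ahler class on $\P(E)$ and borrowing the continuity argument from theorem \ref{theorem 1 HR} for the Hodge-Riemann part.
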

The natural generalization of theorem \ref{theorem 1} to K\"ahler vector bundles, which contains the above theorem as a special case, is the following
\begin{conjecture}
    Let $X$ be a compact K\"ahler manifold of dimension $n$, $E_1,\cdots,E_k$ K\"ahler vector bundles on $X$. Let $p,q$ be nonnegative integers satisfying $p+q+e_1+\cdots+e_k=n$. Then $c_{e_1}(E_1)\cdots c_{e_k}(E_k)$ satisfies the hard Lefschetz property and the Hodge-Riemann relation.
\end{conjecture}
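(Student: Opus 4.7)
The plan is to execute the strategy of Theorems \ref{theorem 1} and \ref{theorem 3} on the fiber product
\[P=\P(E_1)\times_X\cdots\times_X\P(E_k)\xrightarrow{\pi}X.\]
As a tower of projective bundles of K\"ahler vector bundles, $P$ is a compact K\"ahler manifold of dimension $n+e-k$ (with $e=\sum e_i$), and $H^*(P)$ is a free $H^*(X)$-module with Leray--Hirsch basis $\{\tau^J=\tau_1^{j_1}\cdots\tau_k^{j_k}:0\leq j_i\leq e_i-1\}$, where $\tau_i\in H^{1,1}(P,\R)$ denotes the pullback along the natural map $P\to\P(E_i)$ of the K\"ahler class witnessing that $E_i$ is a K\"ahler vector bundle. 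Multiplying the Chern class relations on the individual $\P(E_i)$ gives the key identity
\[\pi^*\bigl(c_{e_1}(E_1)\cdots c_{e_k}(E_k)\bigr)=v\cdot\tau_1\cdots\tau_k,\]
where $v=v_1\cdots v_k$ with $v_i=(-1)^{e_i-1}\tau_i^{e_i-1}+(-1)^{e_i-2}\pi^*c_1(E_i)\,\tau_i^{e_i-2}+\cdots+\pi^*c_{e_i-1}(E_i)$, so that the coefficient of the top Leray--Hirsch monomial $\prod_i\tau_i^{e_i-1}$ in the basis expansion of $v$ is $(-1)^{e-k}\in H^*(X)$.

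By the dimension count, the hard Lefschetz property reduces to injectivity. Given $a\in H^{p,q}(X)$ with $a\cdot c_{e_1}(E_1)\cdots c_{e_k}(E_k)=0$, pulling back yields
\[\pi^*a\cdot v\cdot\tau_1\cdots\tau_k=0\quad\text{in }H^*(P).\]
The final extraction is immediate once $\pi^*a\cdot v=0$ has been established: expanding $\pi^*a\cdot v$ in the Leray--Hirsch basis and reading off the coefficient of $\prod_i\tau_i^{e_i-1}$ yields $(-1)^{e-k}a=0$, hence $a=0$.

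The whole weight of the argument therefore lies in deducing $\pi^*a\cdot v=0$ from $\pi^*a\cdot v\cdot\tau_1\cdots\tau_k=0$, i.e., a mixed hard Lefschetz statement on $P$ for the product of the $k$ classes $\tau_1,\dots,\tau_k$; the dimension count works out exactly, since $\pi^*a\cdot v$ has total degree $\dim P-k$. If the $\tau_i$ were K\"ahler on $P$ this would be Dinh--Nguyen, but they are only nef there (being K\"ahler on the respective $\P(E_i)$), and Theorem \ref{theorem 2} does not apply either because the $k$ nef classes come from $k$ \emph{distinct} K\"ahler bases $\P(E_i)$ rather than a single one. I would attack this in two complementary ways. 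First, prove a tower version of Theorem \ref{theorem 2} fitted to the filtration $P=P_k\to P_{k-1}\to\cdots\to P_0=X$ with $P_j=\P(E_1)\times_X\cdots\times_X\P(E_j)$, in which at the $j$-th step $\tau_j$ is genuinely K\"ahler on $P_j$ (as the hyperplane class of the projective bundle $\pi_j:P_j\to P_{j-1}$) and the earlier $\tau_1,\dots,\tau_{j-1}$ are treated as nef classes pulled back from $P_{j-1}$; this would let one peel off the $\tau_j$'s one at a time. Second, perturb to $\tilde\tau_i=\tau_i+\epsilon\omega_P$ for a fixed K\"ahler class $\omega_P$ on $P$, apply Dinh--Nguyen at $\epsilon>0$, and push to the limit $\epsilon\to 0$ using quantitative estimates in the spirit of Propositions \ref{proposition: local 1} and \ref{proposition: local 2}. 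The step I expect to be hardest is the local analysis for this limit: those propositions were proved for classes pulled back along a \emph{single} linear map, whereas our $\tau_i$ on $P$ arise from $k$ separate projections $P\to\P(E_i)$, and a genuine multi-projection refinement of the local linear-algebra estimates seems to be required.

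For the Hodge--Riemann relation, once hard Lefschetz is in hand the continuity argument of Theorem \ref{theorem 1 HR} applies: deform each $E_i$ along the family $E_i\langle tw\rangle$ for $t\geq 0$ and a fixed auxiliary K\"ahler class $w\in\Kah(X)$, use the hard Lefschetz property together with its analog after inserting one extra factor of $w$ to keep the signatures of the two Hermitian forms constant in $t$, and send $t\to\infty$ to match with the classical Hodge--Riemann relation.
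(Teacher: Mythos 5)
The statement you are trying to prove is posed in the paper as an \emph{open conjecture}, not a theorem: the author records it at the end of section 8 precisely because the methods of the paper do not extend to it. There is therefore no "paper's own proof" to compare against, and your write-up should be read as a strategy outline rather than a completed argument---which, to your credit, is exactly how you present it.

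Your reduction is correct and matches what one would naturally attempt. The fiber product $P=\P(E_1)\times_X\cdots\times_X\P(E_k)$ does carry the Leray--Hirsch basis you describe, the factorization $\pi^*\bigl(c_{e_1}(E_1)\cdots c_{e_k}(E_k)\bigr)=v\cdot\tau_1\cdots\tau_k$ is right, the dimension count for the mixed Lefschetz statement on $P$ works out exactly (the target class $\pi^*a\cdot v$ has total degree $\dim P-k$), and the coefficient extraction at the end is fine. More importantly, you have put your finger on precisely the obstruction that makes this a conjecture rather than a theorem: the classes $\tau_1,\dots,\tau_k$ are nef on $P$ but K\"ahler only on the \emph{different} spaces $\P(E_i)$, so neither the Dinh--Nguyen theorem nor Theorem \ref{theorem 2} (which requires all the degenerate classes to be pulled back from a single $X$ whose cohomology makes $H^*(Y)$ free) applies. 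Your two proposed attacks---a tower version of Theorem \ref{theorem 2}, or a perturbation-and-limit argument with multi-projection refinements of the local estimates---are both plausible directions, but the hard step you flag in each (controlling the local linear algebra when nef classes arise from several distinct projections, not a single one) is precisely where the paper's machinery stops. Until that local multi-projection estimate, or an alternative, is actually proved, the argument has a genuine and acknowledged gap at its crux. The Hodge--Riemann relation via the standard continuity/signature deformation along $E_i\langle tw\rangle$ is routine once hard Lefschetz is in hand, so no objection there.

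In short: you have correctly reconstructed the intended reduction and correctly diagnosed why it is open, but you have not closed the conjecture, and neither does the paper.
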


\section{Appendix}
\begin{lemma}
    Let $X$ be a smooth projective variety and let $Y_\lambda$ be a pencil whose generic element is smooth. Then for any $\alpha\in\A^p(X)$, any K\"ahler metric $\omega$ on $X$, we have
    \begin{align*}
        \lim_{\lambda\rightarrow 0}\int_{Y_\lambda}|\alpha|_\omega^2\frac{\omega^{n-1}}{(n-1)!}=\int_{Y_0}|\alpha|_\omega^2\frac{\omega^{n-1}}{(n-1)!}.
    \end{align*}
\end{lemma}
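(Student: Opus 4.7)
The plan is to express each integral as the pairing of a fixed smooth form on $X$ against the current of integration $[Y_\lambda]$, and then to deduce the convergence from weak continuity of the family $\lambda\mapsto[Y_\lambda]$ in the space of currents. Setting $f = |\alpha|_\omega^2\in C^\infty(X)$ and $\beta = f\cdot\omega^{n-1}/(n-1)!\in\A^{n-1,n-1}(X)$, which is a smooth form on $X$, the identity to be proved becomes
\[\lim_{\lambda\to 0}\int_X\beta\wedge[Y_\lambda]=\int_X\beta\wedge[Y_0],\]
where $[Y_0]$ is the current of integration defined by the Poincar\'e-Lelong formula (which agrees with the naive integral over the smooth locus of $Y_0$, and in particular makes sense when $Y_0$ is singular).

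For the continuity of $[Y_\lambda]$ in $\lambda$, let $L$ be the line bundle defining the pencil, pick sections $s_0, s_1\in H^0(X,L)$ so that $Y_\lambda=\{s_\lambda=0\}$ with $s_\lambda:=s_0+\lambda s_1$, and fix a smooth Hermitian metric $h$ on $L$ with Chern form $\theta$. The Poincar\'e-Lelong formula gives
\[[Y_\lambda]=\theta+\frac{\sqrt{-1}}{\pi}\partial\bar{\partial}\log|s_\lambda|_h.\]
Since $\beta$ is smooth and $X$ is compact, integration by parts yields
\[\int_X\beta\wedge[Y_\lambda]=\int_X\beta\wedge\theta+\frac{\sqrt{-1}}{\pi}\int_X\log|s_\lambda|_h\cdot\partial\bar{\partial}\beta,\]
so the lemma reduces to showing that the functions $u_\lambda:=\log|s_\lambda|_h$ converge to $u_0:=\log|s_0|_h$ in $L^1(X)$ as $\lambda\to 0$; then the right-hand side is continuous in $\lambda$ because $\partial\bar{\partial}\beta$ is a bounded smooth top-degree form.

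The family $(u_\lambda)$ is uniformly bounded above on $X$ (since $|s_\lambda|_h\le\sup_X|s_0|_h+|\lambda|\sup_X|s_1|_h$), each $u_\lambda$ is quasi-plurisubharmonic with $\theta+(\sqrt{-1}/\pi)\partial\bar{\partial}u_\lambda=[Y_\lambda]\ge 0$, and $u_\lambda\to u_0$ pointwise everywhere on $X$ (by continuity of $\log$ where $s_0\neq 0$, and because both sides go to $-\infty$ at points of $\{s_0=0\}$). A standard compactness principle in pluripotential theory --- quasi-psh functions with uniformly bounded curvature that converge pointwise almost everywhere converge in $L^1$ --- then delivers the desired $L^1$ convergence. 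The hard part is precisely this $L^1$-convergence of the potentials; it is a classical fact, but it is the only nontrivial ingredient, everything else being a formal consequence of Poincar\'e-Lelong and of weak continuity of distributions against smooth test forms.
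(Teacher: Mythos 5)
Your proof is correct, but it takes a genuinely different route from the paper's. The paper argues elementarily: convergence away from the singular locus of $(Y_0)_{\mathrm{red}}$ is immediate, and the contribution near the singular locus is killed by showing that $\vol_\omega(Y_\lambda\cap N)$ can be made uniformly small by shrinking a tubular neighborhood $N$ of that locus, which is checked directly from local equations in the simple-normal-crossing case (the only case used). You instead invoke Poincar\'e--Lelong to write $[Y_\lambda]=\theta+\tfrac{\sqrt{-1}}{\pi}\partial\bar\partial\log|s_\lambda|_h$, integrate by parts against the smooth test form $\beta=|\alpha|_\omega^2\,\omega^{n-1}/(n-1)!$, and reduce everything to $L^1$-convergence of the quasi-psh potentials $\log|s_\lambda|_h$, which you obtain from the Hartogs/Demailly compactness theorem for psh functions plus pointwise identification of the limit. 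Both arguments are valid; yours is less hands-on but mobilizes heavier machinery (pluripotential theory) and, as a bonus, works without any hypothesis on the structure of $Y_0$, whereas the paper's proof is spelled out only for the normal-crossing case.

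Two small caveats. First, the step ``quasi-psh with bounded curvature and pointwise convergent $\Rightarrow$ $L^1$-convergent'' deserves a word: the compactness theorem gives only subsequential $L^1_{\mathrm{loc}}$-convergence, and one must then use the pointwise limit to identify every subsequential limit with $u_0$ a.e.\ (a subsequence-of-subsequence argument) to conclude full-family convergence; you gesture at this but do not say it. Second, when $Y_0$ is non-reduced your argument converges to the integral against the Lelong current $[Y_0]$, which counts multiplicities; your parenthetical ``agrees with the naive integral over the smooth locus of $Y_0$'' is true only for $Y_0$ reduced, and one should keep that in mind --- though in the paper's application $Y_0$ is a reduced simple normal crossing divisor, so the two readings coincide.
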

\begin{proof}
    The convergence away from the singular loci of $(Y_0)_{red}$ is clear. It suffices to prove the following statement. For every $\epsilon>0$, there exists a neighborhood $N(\epsilon)$ of the singular loci of $(Y_0)_{red}$ such that $\vol_\omega(Y_\lambda\cap N(\epsilon))<\epsilon$ for all $|\lambda|<\delta(\epsilon)$. In the case that we need, $Y_0$ is a simple normal crossing divisor, and the above statement can be seen easily from local equations.
\end{proof}

\printbibliography

\end{document}